\documentclass[11pt,a4paper,reqno]{amsproc}
\usepackage{amsmath, amsthm, amscd, amsfonts, amssymb, graphicx, color}
\usepackage{subcaption}
\usepackage{enumerate}
\usepackage[bookmarksnumbered, colorlinks, plainpages]{hyperref}
\hypersetup{colorlinks=true,linkcolor=red, anchorcolor=green, citecolor=cyan, urlcolor=red, filecolor=magenta, pdftoolbar=true}

\textheight 23.5truecm \textwidth 15truecm
\setlength{\oddsidemargin}{0.2in}\setlength{\evensidemargin}{0.2in}

\setlength{\topmargin}{-.5cm}

\newtheorem{theorem}{Theorem}[section]
\newtheorem{lemma}[theorem]{Lemma}
\newtheorem{proposition}[theorem]{Proposition}
\newtheorem{corollary}[theorem]{Corollary}
\theoremstyle{definition}
\newtheorem{definition}[theorem]{Definition}

\theoremstyle{remark}
\newtheorem{remark}[theorem]{Remark}

\numberwithin{equation}{section}

\usepackage[square,numbers]{natbib}

\bibliographystyle{abbrvnat}

\begin{document}

\setcounter{page}{1}

\title[Characterization  of bi-parametric potential associated with the Dunkl Laplacian]
 {Characterization of bi-parametric potentials and rate of convergence of truncated hypersingular integrals in the Dunkl setting}

\author[Sandeep Kumar Verma and Athulya P ]{Sandeep Kumar Verma and Athulya P}

\address{Department of Mathematics, SRM University-AP, Amaravati, Guntur--522240, India}

\email{sandeep16.iitism@gmail.com, athulya.panoli97@gmail.com}

\subjclass[2020]{41A35 , 42A38 , 44A35, 47G20}

\keywords{Dunkl transform, Potential operator, Semigroup, Rate of convergence}

\begin{abstract}

In this work, we introduce the $\beta$-semigroup for $\beta > 0$, which unifies and extends the classical Poisson (for $\beta=1$) and heat (for $\beta=2$) semigroups within the Dunkl analysis framework. Leveraging this semigroup, we derive an explicit representation for the inverse of the Dunkl–Riesz potential and characterize the image of the function space $L_k^p(\mathbb{R}^n)$ for $1 \leq p < \frac{n + 2\gamma}{\alpha}$. We further define the bi-parametric potential of order $\alpha$ by
\[
\mathfrak{S}_k^{(\alpha,\beta)} = \left(I + (-\Delta_k)^{\frac{\beta}{2}}\right)^{-\frac{\alpha}{\beta}},
\]
and establish its inverse along with a detailed description of the associated range space. Our approach employs a wavelet-based method that represents the inverse as the limit of truncated hypersingular integrals parameterized by $\epsilon > 0$. To analyze the convergence of these approximations, we introduce the concept of $\eta$-smoothness at a point $x_0$ in the Dunkl setting. We show that if a function $f \in L_k^p(\mathbb{R}^n) \cap L_k^2(\mathbb{R}^n)$, for $1 \leq p \leq \infty$, possesses $\eta$-smoothness at $x_0$, then the truncated hypersingular approximations converge to $f(x_0)$ as $\epsilon \to 0^+$.
\end{abstract}
\maketitle

\section{Introduction}
The field of potential analysis in Euclidean spaces has a rich and well-established history \cite{Stein}, with classical developments centered around Riesz and Bessel potentials due to their fundamental roles in harmonic analysis and partial differential equations \cite{Aronszajn}. In this context, Flett \cite{Flett} introduced a novel fractional integro-differential operator, now known as the Flett potential, which offers improved localization properties and greater flexibility in addressing anisotropic structures and weighted function spaces. More recently, Aliev \cite{Aliev} proposed a broader framework known as the \emph{bi-parametric potential}, defined by the operator
\[
\left(I + (-\Delta)^{\frac{\beta}{2}}\right)^{-\frac{\alpha}{\beta}},
\]
where $I$ is the identity operator, $\Delta$ is the Euclidean Laplacian, and $\alpha, \beta > 0$ are parameters controlling the behavior of the potential. This formulation is particularly noteworthy, as it recovers the Bessel potential when $\beta = 2$ and the Flett potential when $\beta = 1$, thereby unifying a wide class of fractional potential operators under a single analytical framework.

A central problem in potential theory is to derive explicit inversion formulas for such potential-type integral operators. The theory of hypersingular integrals has gradually developed through the sustained investigation of this subject by numerous researchers (see \cite{Bagby,  Lizorkin, Stein}, etc). Rubin, in his seminal monograph
 \cite{Rubin-1986} established inversion formulas for the Riesz and Bessel potentials using the heat semigroup, via the finite difference scheme. This approach introduced a family of truncated hypersingular integrals indexed by a small parameter $\epsilon > 0$, generated by the heat semigroup.

Later, Aliev and Eryiğit \cite{Aliev-2013} investigated the convergence properties of these truncated families, showing that under the condition of $\eta$-smoothness at a point $x_0$, the truncated hypersingular integrals converge to the function value $f(x_0)$ as $\epsilon \to 0^+$. This line of analysis was further extended by Bayrakcı et al. \cite{Bayrakci}, who applied similar techniques to the Flett potential using the Poisson semigroup.

Beyond the finite difference method, Rubin developed an alternative \emph{wavelet-based approach} for inverting potential operators \cite{Rubin-1996, Rubin-1998}. In this method, the role of the finite difference operator is replaced by a wavelet measure, where the number of vanishing moments of the wavelet serves as an analog of the finite difference order. Building on this idea, Sezer and Aliev in 2010  proposed a novel characterization of Riesz potentials using beta-semigroups generated by fractional powers of the Laplacian \cite{Sezer}. These $\beta$-semigroups generalize both the heat semigroup (for $\beta = 2$) and the Poisson semigroup (for $\beta = 1$), with the parameter $\beta$ offering enhanced flexibility, such as the ability to reduce the number of vanishing moments required for convergence.

 In this work, we extend the aforementioned potential analysis within the framework of the Dunkl transform, a broad generalization of the classical Fourier and Hankel transforms  \cite{DJ1, D3}. The Dunkl theory was developed by Dunkl in the late 1980s and extensively studied by many authors (see \cite{DJ1, Rosler-98, Rosler-99, Rosler-2003}). The cornerstone of the Dunkl theory is the differential-difference operator \cite{D1}, which acts as a perturbation of the partial differentials in the Euclidean setting. The motivation for studying these operators stems from the theory of Riemannian symmetric spaces, where spherical functions can be expressed as multivariable special functions that depend on certain discrete sets of parameters. Dunkl operators play a crucial role in the analysis of special functions with reflection symmetries \cite{Rosler-2008}. Beyond their applications in symmetric spaces, Dunkl operators are also fundamental in mathematical physics \cite{Benchikha}, especially in the study of quantum many-body systems. Notably, they play a significant role in the analysis of the Calogero-Moser-Sutherland (CMS) model, which describes interacting particle systems with inverse-square potential interactions; see \cite{DV} and references therein.
 
An important advancement in Dunkl theory is the formulation of the Dunkl Laplacian \cite{D1}, an analogue of the classical Laplace operator which enables the extension of potential theory to broader and more generalized settings. In \cite{Xu}, the \emph{Dunkl-Riesz potential} \( \mathcal{I}_k^\alpha:= \left( -\Delta_k \right)^{-\alpha} \) has been introduced for \( 0 < \alpha < n + 2\gamma \), and its boundedness was investigated under the assumption that the reflection group is \( W = \mathbb{Z}_2^n \). Subsequently, Hassani et al.~\cite{Hassani} extended the theory to general finite reflection groups by expressing the Dunkl-Riesz potential in terms of the heat semigroup, thereby enabling a more unified and flexible analysis of its properties \cite{Gallardo}. These works laid the foundation for further study of fractional powers of the Dunkl Laplacian \cite{Ben-2025, Bouzeffour, Rejeb}.  
In the present paper, we adopt an alternative approach by representing the Dunkl-Riesz potential via the \emph{\( \beta \)-semigroup}, in the spirit of the construction developed in \cite{Sezer}. For $\beta>0$, the \( \beta \)-semigroup is defined as

$$\mathfrak{B}_k^{(\beta,t)}(f)(\xi)= \mathcal{W}_k^{(\beta,t)} \underset{k}{\ast} f(\xi),$$ 
where $\mathcal{W}_k^{(\beta,t)}(\xi)=\mathcal{D}_k^{-1}(e^{-t\|\cdot\|^{\beta}})(\xi)$ and $\underset{k}{\ast}$ represent the Dunkl convolution \cite{Thangavelu}. For $\beta\in (0,2)$, these semigroups were extensively studied by Rejeb in \cite{Rejeb}, particularly in the context of the fractional Dunkl Laplacian via Bochner’s subordination for strongly continuous semigroups. The fractional Laplacian of order $\alpha \in (0,1)$ can be treated as the inverse of the Riesz potential of order $2\alpha$. The $\beta$-semigroup generalizes both the Dunkl heat (for $\beta = 2$) and the Dunkl-Poisson semigroup (for $\beta = 1$) \cite{Rosler-98, Thangavelu}. Using the formulation of these semigroups, the Dunkl-Riesz potential can be represented  as follows: 

\begin{align*}
\mathcal{I}_k^\alpha (f)(x) =\frac{1}{c_k\Gamma(\alpha / \beta)} \int_0^{\infty} t^{\frac{\alpha}{\beta}-1} \mathfrak{B}_k^{(\beta, t)} (f)(x)\,dt.
\end{align*} This representation enables us to derive an inversion formula based on the wavelet method for general finite reflection groups. Furthermore, we establish a characterization of the image of $L_k^p(\mathbb{R}^n)$ for $1\leq p < \frac{n+2\gamma}{\alpha}$ under the  Dunkl-Riesz potential. Our main results can be summarized as follows:
\\

\noindent \textbf{Theorem A} (see Theorem \ref{Inversion-Riesz})
     Let $0< \alpha < n+2\gamma$, $\beta >0 $, $f\in L_k^p(\mathbb{R}^n)$ with $1\leq p < \frac{n+2\gamma}{\alpha}$ 
      and $\nu$ be a wavelet measure satisfying the conditions 
      \begin{align*}
          (i)\quad & \int_0^\infty s^{l}d|\nu|(s) < \infty \quad \text{for some }  l > \frac{\alpha}{\beta}, \\
          (ii)\quad  & \int_0^\infty s^m d\nu(s) = 0, \text{ 
 for} m = 0, 1, \ldots, \left\lfloor \frac{\alpha}{\beta} \right\rfloor,
\end{align*}
where \( \lfloor \cdot \rfloor \) denotes the greatest integer function.
     If $g=\mathcal{I}_k^\alpha (f),$ then 
     \begin{align*}
     \int_0^\infty  \mathbf{W}_k^{(\beta,t)}(g)(\xi) t^{-\frac{\alpha}{\beta}-1}dt = \lim_{\epsilon\longrightarrow  0}
       \mathfrak{T}_\epsilon (g)(\xi) =  C\left(\frac{\alpha}{\beta}, \nu\right)f(\xi),
     \end{align*} where 

\begin{align*}
       \mathbf{W}_k^{(\beta,t)} (f)(\xi) = \int_0^\infty  \mathfrak{B}_k^{(\beta, st)}(f)(\xi) d\nu(s),\quad 
   \mathfrak{T}_\epsilon (g)(\xi) = \int_\epsilon^\infty  \mathbf{W}_k^{(\beta,t)}(g)(\xi) t^{-\frac{\alpha}{\beta}-1}dt
\end{align*} and 
\[ 
C(r,\nu) = \int_0^{\infty} \frac{\mu(t)}{t^{1+r}}dt =  
\begin{cases} 
  \Gamma(-r)\int_0^{\infty}s^{r}d\nu(s) & \text{ if } r \notin \mathbb{N}_0 \\ 
  \frac{(-1)^{r+1}}{r !} \int_0^{\infty}s^{r} \ln{s}d\nu(s) & \text{ if } r \in \mathbb{N}_0. \\  
\end{cases} 
\] 
The function \( \mu(t) = \int_0^{\infty} e^{-ts} \, d\nu(s) \) represents the Laplace transform of the wavelet measure \( \nu \). The convergence of the associated integral expression is considered in the  \( L_k^p(\mathbb{R}^n) \)-norm. Moreover, if \( f \in \mathcal{C}_0(\mathbb{R}^n) \cap L_k^p(\mathbb{R}^n) \), the convergence is uniform on \( \mathbb{R}^n \).\\

\noindent \textbf{Theorem B} (see Theorem \ref{Char.R.P.Space})
    Let $0<\alpha<n+2\gamma, \quad 1<p<\frac{n+2\gamma}{\alpha}$, $\beta > \alpha$, and $\nu$ be the wavelet measure with $C(\frac{\alpha}{\beta},\nu)\neq 0.$
    Then $g \in \mathcal{I}_k^\alpha(L_k^p(\mathbb{R}^n))$ if and only if $g \in L_k^q(\mathbb{R}^n)$ for  $q= \frac{p(n+2\gamma)}{n+2\gamma-p\alpha}$ and 
    $\underset{\epsilon>0}{\sup} \| \mathfrak{T}_\epsilon g\|_{L_k^p(\mathbb{R}^n)} < \infty.$\\

For \( \alpha > 0 \), the Dunkl Bessel and Dunkl Flett potentials are defined by \cite{Ben}
\[
\mathcal{J}_k^\alpha = \left(I - \Delta_k\right)^{-\alpha/2}
\quad \text{and} \quad
\mathcal{F}_k^\alpha = \left(I + (-\Delta_k)^{1/2} \right)^{-\alpha},
\]
respectively. These two potentials exhibit a structural similarity, with their primary distinction arising from the presence of a fractional power of the Dunkl Laplacian. Motivated by this observation, and inspired by the work of Aliev in the Euclidean setting \cite{Aliev}, we introduce a more general class of potential operators associated with the Dunkl Laplacian. Specifically, we define the \emph{bi-parametric potential operator} as
\[
\mathfrak{S}_k^{(\alpha, \beta)} := \left( I + \left( -\Delta_k \right)^{\beta/2} \right)^{-\alpha/\beta},
\]
which generalizes the Dunkl Bessel and Flett potentials for different choices of \( \beta \).
We establish the following integral representation of \( \mathfrak{S}_k^{(\alpha, \beta)} \):
\[
\mathfrak{S}_k^{(\alpha, \beta)} (f)(\xi) = \frac{1}{\Gamma(\alpha/\beta)} \int_0^\infty t^{\frac{\alpha}{\beta}-1} e^{-t} \, \mathfrak{B}_k^{(\beta, t)} (f)(\xi) \, dt,
\]
where \( \mathfrak{B}_k^{(\beta, t)} \) denotes the \(\beta\)-semigroup associated with the Dunkl Laplacian. Furthermore, we derive an inversion formula based on a wavelet-type transform and provide a characterization of the associated potential spaces. \\

\noindent\textbf{Theorem C} (see Theorem \ref{Thrm-Inversion of beta}) Let \( \alpha, \beta > 0 \), \( f \in L_k^p(\mathbb{R}^n) \) for \( 1 \leq p \leq \infty \), and let \( \nu \) be a wavelet measure satisfying the conditions:
\begin{align*}
\text{(i)} & \quad \int_0^\infty s^{l} \, d|\nu|(s) < \infty \quad \text{for some } l > \frac{\alpha}{\beta}, \\
\text{(ii)} & \quad \int_0^\infty s^m \, d\nu(s) = 0 \quad \text{for all } m = 0, 1, \ldots, \left\lfloor \frac{\alpha}{\beta} \right\rfloor,
\end{align*}
where \( \lfloor \cdot \rfloor \) denotes the greatest integer function. Then the following inversion formula holds:
\[
\int_0^\infty s^{-\frac{\alpha}{\beta}-1} \, \mathcal{V}_k^{(\beta,s)}\left( \mathfrak{S}_k^{(\alpha, \beta)}(f) \right)(x) \, ds 
= \lim_{\epsilon \to 0} \mathcal{V}_\epsilon\left( \mathfrak{S}_k^{(\alpha, \beta)}(f) \right)(x) 
= C\left(\frac{\alpha}{\beta}, \nu\right) f(x),
\]
where
\[
\mathcal{V}_\epsilon\left( \mathfrak{S}_k^{(\alpha, \beta)}(f) \right)(x) := \int_\epsilon^\infty s^{-\frac{\alpha}{\beta}-1} \, \mathcal{V}_k^{(\beta,s)}\left( \mathfrak{S}_k^{(\alpha, \beta)}(f) \right)(x) \, ds,
\]
and the constant \( C\left( \frac{\alpha}{\beta}, \nu \right) \) is as defined in Theorem~\ref{Inversion-Riesz}. The convergence of the limit is to be understood in the \( L_k^p(\mathbb{R}^n) \)-norm for \( 1 \leq p < \infty \), and in the uniform norm when \( p = \infty \), provided \( f \in \mathcal{C}_0(\mathbb{R}^n) \).\\

\textbf{Theorem D} (see Theorem \ref{p.Inversion})  Let $\alpha, \beta>0$ and $f\in L_k^p(\mathbb{R}^n)$ for $1<p<\infty$. Then $f\in \mathfrak{S}_k^{(\alpha, \beta)}(L_k^p) $ if and only if $$\underset{\epsilon>0}{\sup}\|\mathcal{V}_\epsilon (f)\|_{L_k^p(\mathbb{R}^n)} < \infty .$$ \\

 In Theorems~\ref{Inversion-Riesz} and \ref{Thrm-Inversion of beta}, the inversions of the Dunkl-Riesz potential and the bi-parametric potential are expressed in terms of truncated hypersingular integrals, namely \( \mathfrak{T}_\epsilon(\mathcal{I}_k^{\alpha}(f)) \) and \( \mathcal{V}_\epsilon\left( \mathfrak{S}_k^{(\alpha, \beta)}(f) \right) \), respectively. This formulation naturally raises the question whether an analogue of \cite[Theorem 3.1]{Aliev-2013} can be established within the Dunkl framework, specifically for Riesz and bi-parametric potentials constructed via the \(\beta\)-semigroup.

In the classical setting, approximation results for Riesz, Bessel, and Flett potentials have been developed using semigroups such as the heat, Poisson, and metaharmonic semigroups \cite{Aliev-2014, Aliev-2013, Bayrakci, Eryigit}, all of which possess kernels with positive values. In contrast, within the Dunkl framework, the positivity of the \(\beta\)-semigroup kernel is known only for \( 0 < \beta \leq 2 \) (see Proposition~\ref{Proposition-main}~(ii)), posing additional analytical challenges.

Moreover, the Dunkl setting exhibits several unresolved issues, notably the lack of a comprehensive understanding of the boundedness of the Dunkl translation operator \( \tau_y \) on \( L_k^p(\mathbb{R}^n) \) for \(p\neq 2 \). Since this operator is fundamental in the definition of \(\eta\)-smoothness, our analysis is necessarily restricted to the Hilbert space \( L_k^2(\mathbb{R}^n) \).

We primarily focus on extending the existing theory by introducing suitable modifications that enable the study of the convergence behavior of truncated hypersingular integrals within the framework of the wavelet method. To the best of our knowledge, the rate of convergence of such integrals has only been examined in the context of potential theory via the finite difference method.

The results presented in Section~\ref{S:5} establish explicit convergence rates for the Dunkl-Riesz and bi-parametric potentials formulated through the \(\beta\)-semigroup. Furthermore, one can validate that these results are consistent with classical cases when \(\beta = 1\), \(\beta = 2\), and \( k = 0 \), aligning with the findings in \cite{Aliev-2014, Aliev-2013}. This can be formulated as
\\
\\
\noindent \textbf{Theorem E} (see Theorem \ref{Riez-R-of-C}) 
    Let \( f \in L_k^p(\mathbb{R}^n) \cap L_k^2(\mathbb{R}^n) \) for \( 1 \leq p < \infty \), and suppose that \( f \) possesses \(\eta\)-smoothness at the point \( x_0 \in \mathbb{R}^n \). Then the following estimate holds:
\begin{align*}
    \left| \mathfrak{T}_\epsilon(\mathcal{I}_k^{\alpha}(f))(x_0)- f(x_0) \right| \leq C\,\eta(Y(\epsilon)) \quad \text{as } \epsilon \to 0^+,
\end{align*}
where \( 0 < \alpha < \frac{n+2\gamma}{p} \), \( C>0\) is a constant independent of \( \epsilon\), and the function \( Y(\epsilon) \) is defined by
\begin{equation*}
     Y(\epsilon) =   
\begin{cases} 
  \epsilon^{1/\beta}, & \text{if } \beta \geq 1, \\ 
  \epsilon^{\beta},   & \text{if } 0 < \beta \leq 1.
\end{cases}
\end{equation*}
\\
\noindent \textbf{Theorem F} (see Theorem \ref{Rate-Pot})
  Let $f \in L_k^p(\mathbb{R}^n)\cap L_k^2(\mathbb{R}^n)$ for $1\leq p \leq \infty$, and suppose that $f$ exhibits $\eta$-smoothness at a point $x_0$. Then the following point-wise estimate holds:
  \begin{align*}
      \left| \mathcal{V}_\epsilon\left( \mathfrak{S}_k^{(\alpha, \beta)}(f) \right)(x_0)-f(x_0) \right| \leq C\eta(Y(\epsilon)), \quad \text{ as $\epsilon \longrightarrow 0^+$}, 
  \end{align*}
  where $\alpha, \beta >0$, $C>0$ is a constant independent of $\epsilon$, and the function \(Y(\epsilon)\) is as defined in Theorem \ref{Riez-R-of-C}.

The paper is organized as follows: Section \ref{S:2} provides a brief overview of the Dunkl operator, Dunkl transform, and the potentials associated with the Dunkl Laplacian. Building upon this foundation, Section \ref{S:3} focuses on the construction of the $\beta$-semigroup and discusses its properties. Leveraging this semigroup, we derive an inversion formula for the Dunkl-Riesz potential and subsequently characterize the associated Dunkl-Riesz potential spaces. In Section \ref{S:4}, we introduce the concept of bi-parametric potentials. Furthermore, we derive the corresponding inversion formula and provide a characterization of the bi-parametric potential spaces. Section \ref{S:5} deals with the rate of convergence of truncated functions associated with the inversion of Riesz and bi-parametric potentials.

\section{Preliminaries} \label{S:2}
This section compiles some background on the Dunkl theory. More details can be found in \cite{DJ1, D1, D2, D3, Rosler-2003} and the references therein. 

\subsection{Dunkl operator}  
We consider the Euclidean space $\mathbb{R}^n$, endowed with the standard inner product $\langle x, y \rangle = \sum_{j=1}^n x(j)y(j)$. A root system $\mathcal{R}$ is a finite set of non-zero vectors in $\mathbb{R}^n$ satisfying the properties $ \mathcal{R} \cap \mathbb{R}u = {\pm u} $ and $ \sigma_u(\mathcal{R}) = \mathcal{R} $ for every $u \in \mathcal{R}$, where $\sigma_u$ denotes the reflection with respect to the hyperplane orthogonal to $u$. For simplicity, we shall consider the normalized root systems, i.e., $|u|^2 = 2$ for all $u \in \mathcal{R}$. The root system $\mathcal{R}$ can be partitioned into two disjoint subsets, the positive roots $\mathcal{R}_+$ and the negative roots $\mathcal{R}_{-}$, via any hyperplane passing through the origin.
The group $W$ generated by the reflections $\sigma_u$ is called the Weyl group (reflection group) of the root system.
A multiplicity function is a $W$-invariant function $k: \mathcal{R} \to \mathbb{C}$.
For further background on root systems and reflection groups, we refer the reader to \cite{GB, Hu, RKAN}.
 
 Given a positive root system $\mathcal{R}_+ $ and a multiplicity function $k$, the associated Dunkl operator was introduced by Dunkl in \cite{D1} and is defined as follows:
\begin{eqnarray*} 
    \mathcal{T}_{\xi}(f)(x) = \partial_{\xi}(f)(x)+ \sum_{u\in \mathcal{R}_+}k(u) \langle u, \xi \rangle \frac{f(x)-f(\sigma_u(x))}{\langle u, x\rangle}, \quad f\in \mathcal{C}^{\prime}(\mathbb{R}^n).
\end{eqnarray*} 

When the multiplicity function $k$ is identically zero, the Dunkl operator coincides with the standard directional derivative in the direction of $\xi$, thereby illustrating its interpretation as a deformation of the classical derivative. A counterpart to the classical exponential function in this setting is the Dunkl kernel $E_k$ \cite{D2}. The kernel $ E_k(\cdot,y)$ is uniquely characterized as the solution to the eigenvalue problem $\mathcal{T}_{\xi}(f)=\langle \xi, y \rangle f$ for all $\xi \in \mathbb{C}^n$ subject to the initial condition $f(0)=1$ cf. \cite{ Opdam}. The Dunkl kernel $E_k$ is fundamental to the theory, as it paves the way for the formulation of an integral transform analogous to the Fourier transform, known as the Dunkl transform $\mathcal{D}_k$ \cite{D3}.  We denote $\mathcal{T}_{j}$ for $\mathcal{T}_{e_j}$, where $\{e_1, e_2, \cdots e_n \}$ is the standard orthonormal basis for $\mathbb{R}^n$. The Dunkl Laplacian $\Delta_k$ \cite{D1}, defined by $\Delta_k= \sum_{j=1}^n \mathcal{T}_j^2$,  can also be expressed in terms of the Euclidean Laplacian  $\Delta$ and gradient $\nabla$ as follows:
\begin{eqnarray*}
    \Delta_k(f)(x) = \Delta (f)(x)+2 \sum _{u \in \mathcal{R}_+} k(u)\left( 
\frac{\langle \nabla f(x),u\rangle }{\langle u,x\rangle} - \frac{f(x)-f(\sigma_u(x))}{\langle u,x\rangle^2}\right), \quad f\in \mathcal{C}^2(\mathbb{R}^n). 
\end{eqnarray*}

\subsection{Dunkl transformation}
Let $k$ be a non-negative multiplicity function, which will be fixed throughout this paper. The weight function $w_k$ is defined as
\begin{align*}
    w_k(x) = \prod_{u\in \mathcal{R}_+}|\langle u,x\rangle|^{2k(u)}.
\end{align*}
The weight function is invariant under the group action and homogeneous of degree $2\gamma$, where  $\gamma  = \sum_{u\in \mathcal{R}_+} k(u)$.\\
For $1\leq p <\infty,$ we define the weighted Lebesgue space  as:
\begin{align*}
    L_k^p(\mathbb{R}^n) = \{ f:\mathbb{R}^n \longrightarrow \mathbb{C} \text{ measurable function and } \int_{\mathbb{R}^n}|f(x)|^pw_k(x)dx< \infty
    \},
\end{align*} where $dx$ is the Lebesgue measure on $\mathbb{R}^n,$ and for $p=\infty$,
\begin{align*}
    L^{\infty}_k(\mathbb{R}^n) = \{ f:\mathbb{R}^n \longrightarrow \mathbb{C} \text{ measurable function and } \text{ ess.sup}|f(x)| < \infty \}.
\end{align*}
A function $f:\mathbb{R}^n \longrightarrow \mathbb{C}$ is said to be a radial function if there exists a function $F_0$ on the non-negative real line such that $f(x)=F_0(\|x\|)$ for all $x \in \mathbb{R}^n.$ The collection of all radial functions on $L_{k}^p(\mathbb{R}^n)$ is denoted by $L_{k,\text{rad}}^p(\mathbb{R}^n)$.\\
For  $ f \in L_k^1(\mathbb{R}^n)$, the Dunkl transform \cite{DJ1,D3} is defined as
\begin{align*}
    \mathcal{D}_k(f)(x) =  c_k \int_{\mathbb{R}^n}f(y)E_k(-ix,y)w_k(y)dy,
\end{align*}
where $ c_k ^{-1} = \int_{\mathbb{R}^n} e^{-{|x|^2}/{2}}w_k(x)dx$. The inverse Dunkl transform is given by the relation $\mathcal{D}_k^{-1}(f)(x)=\mathcal{D}_k(f)(-x)$. Once the multiplicity function becomes zero, the transform $\mathcal{D}_k$  reduces to the Fourier transform and satisfies the following properties: 
\begin{enumerate}[$(i)$]
    \item $\mathcal{D}_k$ is a topological automorphism on Schwartz space $\mathcal{S}(\mathbb{R}^n)$.
    \item $\mathcal{D}_k$ extends to an isometric isomorphism of $L_k^2(\mathbb{R}^n)$.
    \item Riemann–Lebesgue lemma: For all $f\in L_k^1(\mathbb{R}^n)$, the Dunkl transform $\mathcal{D}_k(f) \in \mathcal{C}_0(\mathbb{R}^n)$.
\end{enumerate}

\begin{definition}\textbf{Dunkl translation:}\cite{Thangavelu}
    For a fixed $y\in \mathbb{R}^n,$ the Dunkl translation operator $\tau_y:L_k^2(\mathbb{R}^n) \longrightarrow L_k^2(\mathbb{R}^n)$ is defined by the integral
     \begin{align*}
        \tau_yf(x)= \int_{\mathbb{R}^n} E_k(
        ix,\xi)E_k(iy,\xi)\mathcal{D}_k(f)(\xi)w_k(\xi)d\xi.
     \end{align*}
    \end{definition}
The above integral definition remains valid for the function class $ \mathcal{A}_k(\mathbb{R}^n) = \{ f \in L_k^1(\mathbb{R}^n):\,\, \mathcal{D}_k(f) \in L^1_k(\mathbb{R}^n)\}$.\\

The Dunkl translation possesses the following properties \cite{Rosler-2003, trimeche-2002}.
\begin{proposition} \label{translation property}
    \begin{enumerate}[(i)]
     \item $\mathcal{S}(\mathbb{R}^n)$ is invariant under the Dunkl translation.
    \item Let $f\in \mathcal{S}(\mathbb{R}^n).$ Then for any $x,y\in \mathbb{R}^n$ we have  
    \begin{eqnarray*}
        \tau_yf(x)=\tau_xf(y). 
    \end{eqnarray*}
   \item Let $f \in L^1_{k,\text{rad}}(\mathbb{R}^n)$. Then we have 
    \begin{align*} 
        \int_{\mathbb{R}^n} \tau_yf(x)w_k(x)dx = \int_{\mathbb{R}^n} f(x) w_k(x)dx.
    \end{align*} 
\end{enumerate}
\end{proposition}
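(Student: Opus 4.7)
The plan is to prove the three properties separately, leveraging the integral definition of $\tau_y$ together with the mapping properties of $\mathcal{D}_k$ on $\mathcal{S}(\mathbb{R}^n)$ and $L_k^2(\mathbb{R}^n)$ that were recalled just before the statement.

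For $(i)$, I would first rewrite the defining integral in the form $\tau_y f(x) = c_k^{-1}\,\mathcal{D}_k^{-1}\bigl[E_k(iy,\cdot)\,\mathcal{D}_k(f)\bigr](x)$ (up to the normalization convention used in the paper), and then show that multiplication by the Dunkl kernel $\xi\mapsto E_k(iy,\xi)$ preserves $\mathcal{S}(\mathbb{R}^n)$. The essential inputs are the pointwise bound $|E_k(iy,\xi)|\le 1$ for real $y,\xi$, together with the polynomial-type estimates $|\partial_\xi^\alpha E_k(iy,\xi)|\le C_\alpha\,(1+|y|)^{|\alpha|}$, which are uniform in $\xi$ for each fixed multi-index $\alpha$. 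Applying the Leibniz rule to an arbitrary Schwartz seminorm of $E_k(iy,\cdot)\,\mathcal{D}_k(f)$ then reduces the matter to Schwartz decay of $\mathcal{D}_k(f)$, which holds since $\mathcal{D}_k$ is a topological automorphism of $\mathcal{S}(\mathbb{R}^n)$; applying $\mathcal{D}_k^{-1}$ once more keeps us in $\mathcal{S}(\mathbb{R}^n)$.

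Part $(ii)$ is essentially a one-line observation: the integrand $E_k(ix,\xi)\,E_k(iy,\xi)\,\mathcal{D}_k(f)(\xi)\,w_k(\xi)$ is manifestly symmetric under the exchange of $x$ and $y$, so $\tau_y f(x)=\tau_x f(y)$ follows immediately. For $f\in\mathcal{S}(\mathbb{R}^n)$, part $(i)$ ensures that each side makes classical sense.

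Part $(iii)$ is the substantive piece. The strategy is to use the ``value at the origin'' identity $\int_{\mathbb{R}^n} g(x)\,w_k(x)\,dx=c_k^{-1}\mathcal{D}_k(g)(0)$. From the defining integral of $\tau_y$ one reads off the Fourier multiplier relation $\mathcal{D}_k(\tau_y f)(\xi)=E_k(iy,\xi)\,\mathcal{D}_k(f)(\xi)$, and evaluating at $\xi=0$ together with the normalization $E_k(iy,0)=1$ gives $\mathcal{D}_k(\tau_y f)(0)=\mathcal{D}_k(f)(0)$, which is precisely the claimed conservation of integral. The main obstacle is that the integral definition of $\tau_y$ recalled in the paper is a priori valid on $\mathcal{A}_k(\mathbb{R}^n)$ or $L_k^2(\mathbb{R}^n)$, whereas the statement is for $L^1_{k,\mathrm{rad}}(\mathbb{R}^n)$. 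To bridge this, I would invoke R\"osler's explicit representation of $\tau_y$ on radial functions, writing $\tau_y f(x)=\int F_0\bigl(A(x,y,\eta)\bigr)\,d\mu_x(\eta)$ for a probability measure $\mu_x$ supported in the convex hull of the Weyl orbit of $x$, and justify the integral preservation by Fubini after approximating $f$ in $L^1_{k,\mathrm{rad}}$-norm by radial Schwartz functions. The $L^1$-contractivity of $\tau_y$ that underlies this limiting argument is a feature of the radial class and is precisely what fails in general for $p\neq 2$, as noted in the introduction; this is where I expect the main technical difficulty to sit.
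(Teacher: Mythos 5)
Your argument is correct and follows the standard route: the paper itself offers no proof of this proposition, merely citing \cite{Rosler-2003, trimeche-2002}, and your three steps --- the Schwartz-multiplier bound on $E_k(iy,\cdot)$ coming from de Jeu's derivative estimates, the symmetry of the integrand in $x$ and $y$, and the evaluation of $\mathcal{D}_k(\tau_y f)$ at the origin combined with R\"osler's radial product formula to pass from radial Schwartz functions to $L^1_{k,\text{rad}}(\mathbb{R}^n)$ --- reproduce the proofs found in those references. Note that the $L^1$-contractivity on the radial class that you propose to extract from the product formula is exactly the content of Theorem~\ref{C.Trns} already recalled in the paper, so the final density step can be closed by citing that statement directly.
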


The following theorem states the boundedness of the translation operator.
 \begin{theorem}\cite{Thangavelu}\label{C.Trns}
    For $1\leq p\leq 2,$ the Dunkl translation operator $\tau_y:L_{k,\text{rad}}^p(\mathbb{R}^n) \longrightarrow L_{k}^p(\mathbb{R}^n) $ is a bounded map.  
  \end{theorem}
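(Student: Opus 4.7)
The plan is to establish the bound at the two endpoints $p=1$ and $p=2$ separately and then recover the intermediate values by Riesz--Thorin interpolation applied to $\tau_y$ viewed on the radial class.

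For $p=2$, radiality plays no role. From the integral definition of $\tau_y$ one reads off, on the transform side,
\[
\mathcal{D}_k(\tau_y f)(\xi) = E_k(iy,\xi)\,\mathcal{D}_k(f)(\xi).
\]
The Dunkl kernel satisfies $|E_k(iy,\xi)| \leq 1$ for $y,\xi \in \mathbb{R}^n$, and the Dunkl transform is an $L_k^2$-isometry (property (ii) listed in the preliminaries), so $\|\tau_y f\|_{L_k^2} \leq \|f\|_{L_k^2}$ follows at once, with constant $1$ and no dependence on $y$.

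For $p=1$, the crucial ingredient is the positivity of the Dunkl translation on radial functions: if $f \in L^1_{k,\text{rad}}(\mathbb{R}^n)$ is non-negative, then $\tau_y f \geq 0$ almost everywhere. This rests on R\"osler's explicit integral representation of $\tau_y f$ for radial $f$ against a probability measure depending on $x$ and $y$. Granting this, Proposition~\ref{translation property}(iii) immediately yields
\[
\|\tau_y f\|_{L_k^1} = \int_{\mathbb{R}^n} \tau_y f(x)\, w_k(x)\, dx = \int_{\mathbb{R}^n} f(x)\, w_k(x)\, dx = \|f\|_{L_k^1}
\]
for every non-negative radial $f$. A general real radial $f$ is decomposed as $f=f^+ - f^-$ with $f^\pm$ radial and non-negative, and the triangle inequality combined with the previous identity gives $\|\tau_y f\|_{L_k^1} \leq \|f\|_{L_k^1}$. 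The complex case follows by splitting into real and imaginary parts.

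With both endpoint estimates in hand, Riesz--Thorin interpolation applied to $\tau_y$, initially defined on the dense subclass $\mathcal{S}(\mathbb{R}^n) \cap L^p_{k,\text{rad}}(\mathbb{R}^n)$ (which is stable under $\tau_y$ by Proposition~\ref{translation property}(i)), produces the claimed bound on $L^p_{k,\text{rad}}(\mathbb{R}^n)$ for every $1 \leq p \leq 2$, and density closes the argument. The principal obstacle is the positivity property underlying the $p=1$ estimate: it depends on the R\"osler integral representation, which genuinely uses the geometry of the reflection group and the structure of radial functions in the Dunkl setting. Everything else is a direct consequence of Plancherel, Proposition~\ref{translation property}, and interpolation.
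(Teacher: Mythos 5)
The paper offers no proof of its own here—it cites the result from Thangavelu--Xu—and your argument (Plancherel together with $|E_k(iy,\xi)|\leq 1$ at $p=2$, R\"osler's positive radial product formula yielding $\tau_y f\geq 0$ and hence an $L_k^1$-isometry on non-negative radial functions at $p=1$, then Riesz--Thorin on the radial class) is essentially the proof given in that reference. The only point worth a second look is that the interpolation must be carried out within the radial subclass, which works because the auxiliary functions produced by the three-lines argument from a radial simple function remain radial.
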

\begin{definition}
    \textbf{Dunkl convolution:}  Let $f,g \in L_k^2(\mathbb{R}^n)$. The Dunkl convolution product  of $f$ and $g$ is denoted by $f\underset{k}{\ast} g$ and defined  as \cite{Thangavelu}
       \begin{align*}
          f \underset{k}{\ast} g(x) = \int_{\mathbb{R}^n} f(y)\tau_xg(-y)w_k(y)dy.
       \end{align*}
\end{definition}
Now, we list some results of the Dunkl convolution operator \cite{Thangavelu}. 
\begin{proposition} \label{D.C-Property}
   Let $f,g\in L_k^2(\mathbb{R}^n)$. Then the following holds:
   \begin{enumerate}[(i)]
       \item $(f\underset{k}{\ast}g)(x)=(g\underset{k}{\ast}f)(x)$ 
       \item $\mathcal{D}_k(f\underset{k}{\ast}g)(x)=\mathcal{D}_k(f)(x)\mathcal{D}_k(g)(x).$
   \end{enumerate}
\end{proposition}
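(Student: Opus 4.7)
The plan is to prove part (ii) first and then deduce part (i) as an immediate corollary via the injectivity of the Dunkl transform on $L_k^2(\mathbb{R}^n)$. The key tool I will use is the intertwining relation
\[
\mathcal{D}_k(\tau_a h)(\xi) = E_k(ia,\xi)\,\mathcal{D}_k(h)(\xi),
\]
which follows directly from the integral definition of $\tau_a$ together with Dunkl inversion. To avoid convergence ambiguities I would first restrict to $f,g \in \mathcal{S}(\mathbb{R}^n) \subset \mathcal{A}_k(\mathbb{R}^n)$, so every iterated integral below is absolutely convergent, and then extend to $L_k^2(\mathbb{R}^n)$ by density.

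For part (ii), substituting the definition of $f \underset{k}{\ast} g$ into the Dunkl transform and applying Fubini gives
\begin{align*}
\mathcal{D}_k(f \underset{k}{\ast} g)(\xi) = c_k \int_{\mathbb{R}^n} f(y)\, w_k(y) \left[ \int_{\mathbb{R}^n} \tau_x g(-y)\, E_k(-ix,\xi)\, w_k(x)\, dx \right] dy.
\end{align*}
Using Proposition \ref{translation property}(ii) to rewrite $\tau_x g(-y) = \tau_{-y} g(x)$, the inner integral is, up to the normalization constant of $\mathcal{D}_k$, exactly $\mathcal{D}_k(\tau_{-y} g)(\xi)$, which the intertwining relation identifies with $E_k(-iy,\xi)\,\mathcal{D}_k(g)(\xi)$. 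Pulling $\mathcal{D}_k(g)(\xi)$ out of the remaining $y$-integral then leaves precisely $\mathcal{D}_k(f)(\xi)$, yielding the product identity in (ii). Part (i) follows at once: (ii) gives $\mathcal{D}_k(f \underset{k}{\ast} g) = \mathcal{D}_k(g \underset{k}{\ast} f)$ pointwise because ordinary multiplication is commutative, and since $\mathcal{D}_k$ is an isometric isomorphism on $L_k^2(\mathbb{R}^n)$, hence injective, we conclude $f \underset{k}{\ast} g = g \underset{k}{\ast} f$ almost everywhere.

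The main obstacle I anticipate is the Schwartz-to-$L_k^2$ extension, because $\tau_x g(-y)$ does not admit clean uniform pointwise bounds and the Dunkl translation is not known to be bounded on $L_k^p$ for $p \neq 2$ on general (non-radial) functions, cf.\ Theorem \ref{C.Trns}. The resolution is a standard density argument: approximate $f, g \in L_k^2(\mathbb{R}^n)$ by Schwartz sequences $f_n, g_n$, use Plancherel together with Cauchy--Schwarz to show that $\mathcal{D}_k(f_n)\,\mathcal{D}_k(g_n) \to \mathcal{D}_k(f)\,\mathcal{D}_k(g)$ in $L_k^1(\mathbb{R}^n)$, and then transfer the identity through $\mathcal{D}_k^{-1}$, which maps $L_k^1$ continuously into $\mathcal{C}_0(\mathbb{R}^n)$. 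Once (ii) is known on $L_k^2(\mathbb{R}^n)$, (i) is free.
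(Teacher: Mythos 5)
The paper does not actually prove this proposition; it is quoted verbatim from Thangavelu--Xu \cite{Thangavelu} with no argument supplied, so there is no ``paper proof'' to compare against. Your proposal is the standard proof and it is correct: the intertwining relation $\mathcal{D}_k(\tau_a h)(\xi)=E_k(ia,\xi)\,\mathcal{D}_k(h)(\xi)$ follows immediately from the integral definition of $\tau_a$ together with Dunkl inversion, the symmetry $\tau_x g(-y)=\tau_{-y}g(x)$ is exactly Proposition \ref{translation property}(ii), and deducing (i) from (ii) via injectivity of $\mathcal{D}_k$ on $L_k^2(\mathbb{R}^n)$ is clean. Two small points worth making explicit. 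First, in the density step you verify convergence of the Fourier side but leave the convolution side implicit; the reason it converges (uniformly in $x$, in fact) is that $\tau_x$ is a contraction on $L_k^2(\mathbb{R}^n)$ because $|E_k(iy,\xi)|\le 1$, so
\begin{equation*}
\bigl|f\underset{k}{\ast}g(x)-f_n\underset{k}{\ast}g_n(x)\bigr|\le \|f-f_n\|_{L_k^2}\|g\|_{L_k^2}+\|f_n\|_{L_k^2}\|g-g_n\|_{L_k^2},
\end{equation*}
which is precisely why the restriction to $p=2$ rescues the argument — your stated worry about unboundedness of $\tau_x$ on $L_k^p$, $p\neq 2$, never enters. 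Second, with the normalizations as literally printed in this paper (no $c_k$ in front of the convolution integral), your computation produces $\mathcal{D}_k(f\underset{k}{\ast}g)=c_k^{-1}\mathcal{D}_k(f)\mathcal{D}_k(g)$; the extra constant is an artifact of the paper's conventions rather than of your argument, but since you flag it only as ``up to the normalization constant,'' you should track it explicitly if you want identity (ii) exactly as stated.
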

\begin{theorem} \label{D.C.continuity}
    For a bounded radial function $g$ in $L_k^1(\mathbb{R}^n)$, the convolution operator is a bounded operator on $L_k^p(\mathbb{R}^n)$, where $1\leq p\leq\infty$. In particular,
    \begin{align*}
        \| f\underset{k}{\ast}g  \|_{L_k^p(\mathbb{R}^n)} \leq \|g||_{L_k^1(\mathbb{R}^n)} \|f\|_{L_k^p(\mathbb{R}^n)}.
    \end{align*}
\end{theorem}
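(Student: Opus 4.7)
The plan is to prove the Young-type inequality directly by a pointwise H\"older estimate on the convolution integral combined with the mass-preservation property of the Dunkl translation on radial functions (Proposition~\ref{translation property}(iii)). The three cases $p=1$, $1<p<\infty$, and $p=\infty$ all reduce to the same underlying identity once we pass to $|g|$. Since $|g|$ is still bounded and radial, and the Dunkl translation $\tau_x$ preserves nonnegativity on radial functions (via R\"osler's product formula), we have $|\tau_x g(-y)|\le \tau_x|g|(-y)$, hence $|f\underset{k}{\ast}g(x)|\le |f|\underset{k}{\ast}|g|(x)$, so it suffices to prove the estimate assuming $g\ge 0$.

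The key identity, valid for any $g\ge 0$ radial in $L_k^1(\mathbb{R}^n)$ and every $x\in\mathbb{R}^n$, is
\[
\int_{\mathbb{R}^n}\tau_x g(-y)\,w_k(y)\,dy=\|g\|_{L_k^1(\mathbb{R}^n)},
\]
which follows from the substitution $z=-y$ (the weight $w_k$ is even in $y$) and Proposition~\ref{translation property}(iii) applied to the radial function $g$. With this in hand, the endpoints are immediate: for $p=\infty$, pulling $\|f\|_{L_k^\infty}$ out of the convolution integral gives $\|f\underset{k}{\ast}g\|_{L_k^\infty}\le \|f\|_{L_k^\infty}\|g\|_{L_k^1}$; for $p=1$, Fubini together with the symmetry $\tau_x g(-y)=\tau_{-y}g(x)$ from Proposition~\ref{translation property}(ii) and the same identity now integrated in $x$ yields $\|f\underset{k}{\ast}g\|_{L_k^1}\le \|g\|_{L_k^1}\|f\|_{L_k^1}$.

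For $1<p<\infty$ I would split $\tau_xg(-y)=(\tau_xg(-y))^{1/p}(\tau_xg(-y))^{1/p'}$ with $\frac{1}{p}+\frac{1}{p'}=1$ and apply H\"older's inequality inside the convolution, pairing the $1/p'$ factor with the constant function and integrating it via the key identity above, to obtain the pointwise bound
\[
|f\underset{k}{\ast}g(x)|^p\le \|g\|_{L_k^1}^{p-1}\int_{\mathbb{R}^n}|f(y)|^p\,\tau_xg(-y)\,w_k(y)\,dy.
\]
Integrating in $x$, applying Fubini, and reusing the identity (this time in the $x$-variable, via Proposition~\ref{translation property}(ii)) to collapse the inner integral to $\|g\|_{L_k^1}$ then yields $\|f\underset{k}{\ast}g\|_{L_k^p}^p\le \|g\|_{L_k^1}^p\|f\|_{L_k^p}^p$, which is the desired inequality.

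The main technical point to check is the validity of the symmetry $\tau_xg(-y)=\tau_{-y}g(x)$ for a bounded radial $g\in L_k^1(\mathbb{R}^n)$, since Proposition~\ref{translation property}(ii) is stated for Schwartz functions. Boundedness of $g$ together with $g\in L_k^1(\mathbb{R}^n)$ yields $g\in L_k^2(\mathbb{R}^n)$, so $\tau_y g$ is defined by the integral representation in terms of $\mathcal{D}_k g$, from which the symmetry is transparent; alternatively, one may approximate $g$ by Schwartz radial functions and pass to the limit using the $L_{k,\mathrm{rad}}^1\to L_k^1$ boundedness of $\tau_y$ from Theorem~\ref{C.Trns}.
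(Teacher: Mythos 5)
The paper offers no proof of this theorem at all: it is quoted as a known result from \cite{Thangavelu} (Thangavelu--Xu), and your argument is essentially the proof given in that reference --- reduce to nonnegative radial $g$ (writing $g=g_+-g_-$ with $g_\pm$ radial) using the positivity of the Dunkl translation on nonnegative radial functions via R\"osler's product formula \cite{Rosler-2003}, then run the standard Young/H\"older/Fubini scheme powered by the mass-preservation identity of Proposition~\ref{translation property}(iii). Your proof is correct; the only loose point is calling the symmetry $\tau_x g(-y)=\tau_{-y}g(x)$ ``transparent'' from the $L_k^2(\mathbb{R}^n)$ integral representation, since that pointwise formula is only guaranteed for $g\in\mathcal{A}_k(\mathbb{R}^n)$ (i.e.\ $\mathcal{D}_k(g)\in L_k^1(\mathbb{R}^n)$, which boundedness plus integrability of $g$ does not ensure), but the alternative you offer --- approximation by radial Schwartz functions combined with Theorem~\ref{C.Trns} --- closes this gap correctly.
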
 

\begin{definition}\textbf{Dunkl maximal function:} Let $f\in L_k^2(\mathbb{R}^n)$. The maximal function $\mathcal{M}_k$ associated with Dunkl transform is defined by \cite{Thangavelu} 
\begin{align*}
    \mathcal{M}_k(f)(x) = \underset{r>0}{\sup} \frac{1}{d_k r^{n+2\gamma}}\left|f\underset{k}{\ast}\chi_{B(0,r)} \right|, 
\end{align*} where $\chi_{B(0,r)}$ is the characteristic function of the ball of radius r and centered at the origin and $d_k=\int_{\mathbb{S}^{n-1}} w_k(\xi^\prime)d\sigma(\xi^\prime)$. Here $d\sigma$ denotes the Lebesgue measure on the unit sphere.
    
\end{definition}

\subsection{Riesz, Bessel, and Flett potentials associated with Dunkl Laplacian}
We review the Riesz, Bessel, and Flett potentials associated with the Dunkl transform \cite{Xu, Ben}. Let us first recall certain semigroups, such as heat and Poisson semigroups, associated with the Dunkl transform. For simplicity, we use the terms $k$-heat and $k$-Poisson semigroup instead of the heat and Poisson semigroup associated with the Dunkl transform. For more details related to these semigroups, the readers are referred to the works of R\"osler \cite{Rosler-98}, Thangavelu and Xu \cite{Thangavelu}, and Ben Salem et. al \cite{Ben}.\\ Let $f\in \mathcal{S}(\mathbb{R}^n)$. The k-heat semigroup is defined as:
\begin{align*}
    H_k^t(f)(x)= f\underset{k}{\ast} h_k^t (x),
\end{align*}where $h_k^t(x)= (2t)^{-(\gamma+\frac{n}{2})}e^{-\frac{\|x\|^2}{4t}}$.  The $k$-Poisson semigroup is defined as:
\begin{align*}
    \mathcal{P}_k^t(f)(x) = f\underset{k}{\ast} p_k^t (x),
\end{align*} where $p_k^t(x) = C_{n,k} \frac{t}{(t^2+\|x\|^2)^{2\gamma+\frac{n+1}{2}}}$ with  $C_{n,k}= \frac{2^{\gamma+\frac{n}{2}}}{\Gamma(1/2)} \Gamma(2\gamma+\frac{n+1}{2}).$ Also note that $ h_k^t(x)= \mathcal{D}_k\left( e^{-t\|\cdot\|^2} \right)(x) \quad \text{ and } \quad p_k^t(x)= \mathcal{D}_k \left(e^{-t\|\cdot\|}\right)(x). $
\\

 For $0< \alpha < 2\gamma+n$. The Dunkl-Riesz potential of order $\alpha$  \cite{Xu} is defined as  
\begin{align*}
    \mathcal{I}_k^{\alpha}(f)(x)= \frac{1}{d_k^{\alpha}}\int_{\mathbb{R}^n}\tau_{\xi}f(x) \frac{1}{\|\xi\|^{2\gamma+n-\alpha}} w_k(\xi)d\xi,
\end{align*} where $d_k^{\alpha}= 2^{-\gamma-\frac{n}{2}+\alpha} \frac{\Gamma(\alpha/2)}{\Gamma(\gamma+(n-\alpha)/2)}$.  \\
The Dunkl-Bessel potential of order $\alpha >0$  is given by the convolution product \cite{Ben} 
\begin{align*}
    \mathcal{J}_k^{\alpha}(f) = f \underset{k}{\ast} G_k^{\alpha} \text{ where } G_k^{\alpha}(x)=\mathcal{D}_k \left((1+\|\cdot\|^2)^{-\alpha/2}\right)(x).
\end{align*} This can also be expressed in terms of the $k$-heat semigroup as follows:
\begin{align*}
    \mathcal{J}_k^{\alpha}(f)(x)= \frac{1}{\Gamma(\alpha/2)} \int_0^{\infty} t^{\frac{\alpha}{2}-1} e^{-t} H_k^t(f)(x) dt.
\end{align*} Similarly,  the Dunkl-Flett potential of order $\alpha >0$  is given by 
\begin{align*}
    \mathcal{F}_k^{\alpha}(f) = f \underset{k}{\ast} U_k^{\alpha}, \text{ where } U_k^{\alpha}(x)=\mathcal{D}_k\left((1+\|\cdot\|)^{-\alpha}\right)(x).
\end{align*} It can also be written in the form of $k$-Poisson semigroup as follows,
\begin{align*}
    \mathcal{F}_k^{\alpha}(f)(x)= \frac{1}{\Gamma(\alpha)} \int_0^{\infty} t^{\alpha-1} e^{-t} \mathcal{P}_k^t(f)(x) dt.
\end{align*}

\section{Characterization of Dunkl-Riesz potential} \label{S:3}
Fractional integral operators are powerful mathematical tools in various fields, extending classical integration to non-integral orders. Their applications span pure mathematics, physics, engineering, and data science. A fundamental example of such operators is the Riesz potential, which plays a crucial role in harmonic analysis and potential theory \cite{Rubin-1996, Rubin-1998}. The Dunkl–Riesz potential has been studied extensively by various researchers; for reference, see \cite{Gallardo, Gorbachev-2021, Hassani}. 
In this section, we explore the Riesz potential through the framework of the $\beta$-semigroup, establish its inverse using the wavelet method, and characterize the Dunkl-Riesz potential space. 
\\

\noindent We recall the representation of Dunkl-Riesz potential in terms of $k$-heat semigroup \cite{Hassani}.

\begin{lemma} \label{Riesz-Heat}
    Let $0<\alpha< 2\gamma+n$ and $f\in \mathcal{S}(\mathbb{R}^n).$ Then Dunkl-Riesz potential admits the following integral representation
\begin{align} \label{Riesz-1}
    \mathcal{I}_k^{\alpha}(f)(x)= \frac{1}{c_k\Gamma(\alpha/2)} \int_0^{\infty} t^{\frac{\alpha}{2}-1} H_k^t(f)(x) dt.
\end{align}
\end{lemma}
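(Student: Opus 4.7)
The plan is to establish (\ref{Riesz-1}) by applying the Dunkl transform to both sides and invoking its injectivity on $\mathcal{S}(\mathbb{R}^n)$, which $\mathcal{D}_k$ preserves (Proposition~2.4). It then suffices to show that the transform of the right-hand side agrees with the well-known Fourier-multiplier description of the Dunkl--Riesz potential, whose symbol is $\|\xi\|^{-\alpha}$ (up to the normalization constant $c_k$).

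For the right-hand side, I would first use that $H_k^t(f) = f \underset{k}{\ast} h_k^t$ with $h_k^t = \mathcal{D}_k(e^{-t\|\cdot\|^2})$. Because $e^{-t\|\cdot\|^2}$ is radial, two applications of $\mathcal{D}_k$ return the same function, so $\mathcal{D}_k(h_k^t)(\xi) = e^{-t\|\xi\|^2}$. Combined with the convolution identity in Proposition~\ref{D.C-Property}, this gives $\mathcal{D}_k(H_k^t f)(\xi) = e^{-t\|\xi\|^2}\,\mathcal{D}_k(f)(\xi)$. Interchanging the $t$-integration with $\mathcal{D}_k$ (justified below) and invoking the elementary identity
\[
\int_0^\infty t^{\alpha/2 - 1} e^{-t\|\xi\|^2}\,dt = \Gamma(\alpha/2)\,\|\xi\|^{-\alpha},
\]
the $\Gamma(\alpha/2)$ cancels and I obtain a multiplier of the form $\tfrac{1}{c_k}\|\xi\|^{-\alpha}\mathcal{D}_k(f)(\xi)$.

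To see this matches $\mathcal{D}_k(\mathcal{I}_k^\alpha f)$, I would rewrite the definition as $\mathcal{I}_k^\alpha f = (d_k^\alpha)^{-1}\, f \underset{k}{\ast}\|\cdot\|^{-(2\gamma+n-\alpha)}$ and compute $\mathcal{D}_k(\|\cdot\|^{-(2\gamma+n-\alpha)})$ by the same heat-kernel trick: the substitution $u = \|x\|^2/(4t)$ in $\int_0^\infty t^{\alpha/2-1}h_k^t(x)\,dt$ produces exactly $\tfrac{\Gamma(\alpha/2)}{d_k^\alpha}\|x\|^{-(2\gamma+n-\alpha)}$, so that $\mathcal{D}_k(\|\cdot\|^{-(2\gamma+n-\alpha)})(\xi) = d_k^\alpha\|\xi\|^{-\alpha}$ and the factor $d_k^\alpha$ cancels with the prefactor. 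Injectivity of $\mathcal{D}_k$ on $\mathcal{S}(\mathbb{R}^n)$ then yields the claimed identity.

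The main technical obstacle is justifying Fubini's theorem for the interchange of $\int_0^\infty \cdots dt$ and $\mathcal{D}_k$. Near $t=0$ the factor $t^{\alpha/2-1}$ is integrable because $\alpha>0$; for large $t$, the exponential factor together with the Schwartz decay of $\mathcal{D}_k(f)$ provides the necessary control. The condition $\alpha<n+2\gamma$ is precisely what ensures that the limiting multiplier $\|\xi\|^{-\alpha}$ is locally integrable against $w_k$ near the origin, so that the computations make sense as tempered objects. Once the interchange is established, the remaining steps are purely computational.
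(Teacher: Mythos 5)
Your proposal is essentially sound, but note first that the paper does not prove this lemma at all: it is recalled verbatim from the reference [Hassani et al.], so there is no in-paper proof to match. The closest internal analogue is the proof of Lemma~\ref{Riesz-Poisson}, which works entirely on the physical-space side: insert the convolution representation of the semigroup, apply Fubini, and evaluate the $t$-integral of the kernel directly to recover the Riesz kernel $\|\xi\|^{-(2\gamma+n-\alpha)}$ with the correct constant. Your argument instead passes to the Dunkl-transform side and matches multipliers; this is a legitimate alternative, but it is slightly roundabout, because the subordination identity you derive along the way, namely $\int_0^\infty t^{\alpha/2-1}h_k^t(x)\,dt = \tfrac{\Gamma(\alpha/2)}{d_k^\alpha}\|x\|^{-(2\gamma+n-\alpha)}$ via the substitution $u=\|x\|^2/(4t)$, already yields the lemma directly once combined with $H_k^t(f)=f\underset{k}{\ast}h_k^t$ and Fubini, with no need to invoke injectivity of $\mathcal{D}_k$ or to interpret $\mathcal{D}_k\bigl(\|\cdot\|^{-(2\gamma+n-\alpha)}\bigr)$ as a tempered distribution (that kernel is not in $L_k^1(\mathbb{R}^n)$, so your multiplier identity for it requires a distributional justification you only gesture at). Two small corrections: the hypothesis $\alpha<n+2\gamma$ is used primarily to make the $t$-integral converge at $t=\infty$, since $h_k^t(x)\le (2t)^{-(\gamma+n/2)}$ forces $t^{\alpha/2-1-\gamma-n/2}$ to be integrable there, not only to make $\|\xi\|^{-\alpha}$ locally integrable against $w_k$; and you should be careful tracking the factor $c_k$, since the paper's stated conventions for the translation, the convolution, and the identity $\mathcal{D}_k(f\underset{k}{\ast}g)=\mathcal{D}_k(f)\mathcal{D}_k(g)$ do not all carry the same normalization, which is exactly where the prefactor $\tfrac{1}{c_k}$ in \eqref{Riesz-1} must be accounted for.
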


\noindent In the same line, we can derive the representation of Dunkl-Riesz potential in conjunction with the $k$-Poisson semigroup.  
\begin{lemma}\label{Riesz-Poisson}
    For $0< \alpha< 2\gamma+n$ and $f \in \mathcal{S}(\mathbb{R}^n),$ we have the following representation for Dunkl-Riesz potential,
    \begin{align} \label{Riesz-2}
        \mathcal{I}_k^{\alpha}(f)(x)= \frac{1}{c_k\Gamma(\alpha)} \int_0^{\infty} t^{\alpha-1} \mathcal{P}_k^t(f)(x) dt.
    \end{align}
\end{lemma}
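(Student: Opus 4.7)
The plan is to verify \eqref{Riesz-2} by showing that both sides have the same Dunkl transform and then invoking injectivity of $\mathcal{D}_k$ on $\mathcal{S}(\mathbb{R}^n)$. Two symbolic computations underpin the argument. Since $p_k^t = \mathcal{D}_k(e^{-t\|\cdot\|})$ and $\mathcal{D}_k^{-1}(g)(x)=\mathcal{D}_k(g)(-x)$ gives $\mathcal{D}_k^2 g(\xi)=g(-\xi)$, the radial nature of the exponential yields $\mathcal{D}_k(p_k^t)(\xi)=e^{-t\|\xi\|}$. Combined with the convolution identity in Proposition~\ref{D.C-Property}(ii), this produces the semigroup symbol $\mathcal{D}_k(\mathcal{P}_k^t f)(\xi)=e^{-t\|\xi\|}\mathcal{D}_k(f)(\xi)$; analogously, $\mathcal{D}_k(H_k^t f)(\xi)=e^{-t\|\xi\|^2}\mathcal{D}_k(f)(\xi)$.

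First, I would apply $\mathcal{D}_k$ to the known heat representation from Lemma~\ref{Riesz-Heat}, exchange $\mathcal{D}_k$ with the $t$-integral, and use the elementary evaluation $\int_0^\infty t^{\alpha/2-1}e^{-t\|\xi\|^2}\,dt=\Gamma(\alpha/2)\|\xi\|^{-\alpha}$ to obtain the Riesz symbol
\[
\mathcal{D}_k\bigl(\mathcal{I}_k^\alpha(f)\bigr)(\xi) = \frac{\mathcal{D}_k(f)(\xi)}{c_k\,\|\xi\|^\alpha}.
\]
Next I would take $\mathcal{D}_k$ of the right-hand side of \eqref{Riesz-2}, again interchange the transform with the $t$-integral, and use $\int_0^\infty t^{\alpha-1}e^{-t\|\xi\|}\,dt=\Gamma(\alpha)\|\xi\|^{-\alpha}$ to arrive at exactly the same quantity. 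Since $\mathcal{D}_k$ is a topological automorphism of $\mathcal{S}(\mathbb{R}^n)$, equality of Dunkl transforms forces equality of the functions themselves.

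The main technical point is the justification of Fubini's theorem when interchanging $\mathcal{D}_k$ with the $t$-integral; this requires pointwise control of $\mathcal{P}_k^t(f)$ and $H_k^t(f)$ giving absolute integrability in $t$ against $t^{\alpha-1}$ and $t^{\alpha/2-1}$ respectively, near both $t=0$ (using uniform boundedness of the semigroups on $\mathcal{S}$) and $t=\infty$ (using the decay of the kernels together with the Schwartz decay of $f$). The hypothesis $0<\alpha<2\gamma+n$ is exactly what aligns these endpoint integrabilities. A conceptually clean alternative route, which can serve as a cross-check, is Bochner's subordination identity
\[
e^{-t\|\xi\|}=\frac{t}{2\sqrt{\pi}}\int_0^\infty s^{-3/2}e^{-t^2/(4s)}e^{-s\|\xi\|^2}\,ds,
\]
which expresses $\mathcal{P}_k^t(f)$ as a Gaussian mixture of $H_k^s(f)$; substituting into \eqref{Riesz-2}, interchanging the $s$- and $t$-integrals, evaluating $\int_0^\infty t^\alpha e^{-t^2/(4s)}\,dt=2^{\alpha-1}s^{(\alpha+1)/2}\Gamma((\alpha+1)/2)$ via $u=t^2/(4s)$, and simplifying through the Legendre duplication formula reduces \eqref{Riesz-2} directly to \eqref{Riesz-1}.
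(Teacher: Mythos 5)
Your proposal is correct in substance but takes a genuinely different route from the paper. The paper's proof stays entirely in physical space: it expands $\int_0^\infty t^{\alpha-1}\mathcal{P}_k^t(f)(x)\,dt$ using the explicit Poisson kernel $p_k^t(\xi)=C_{n,k}\,t\,(t^2+\|\xi\|^2)^{-\gamma-\frac{n+1}{2}}$, applies Fubini, and evaluates the inner $t$-integral via the substitutions $u=t/\|\xi\|$ and $u=\tan\theta$, obtaining a beta function times $\|\xi\|^{\alpha-n-2\gamma}$; the resulting expression is then recognized as a constant multiple of the defining integral of $\mathcal{I}_k^\alpha(f)$ against $\tau_x f$, and the constants collapse by the beta--gamma identity. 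You instead work on the Dunkl transform side, matching the symbol $c_k^{-1}\|\xi\|^{-\alpha}$ of both representations through the elementary Gamma integrals, and conclude by injectivity of $\mathcal{D}_k$. Your route is shorter and bypasses the explicit kernel, but it needs two justifications the paper's does not: the interchange of $\mathcal{D}_k$ with the $t$-integral (which you correctly flag, and which uses $0<\alpha<n+2\gamma$ at the endpoints), and the injectivity step itself --- note that $\mathcal{I}_k^\alpha(f)$ does not belong to $\mathcal{S}(\mathbb{R}^n)$, so appealing to ``$\mathcal{D}_k$ is a topological automorphism of $\mathcal{S}$'' is not quite the right citation; you should invoke injectivity of $\mathcal{D}_k$ on tempered distributions (or on a class such as $L_k^1+L_k^2$ containing both sides), which is standard but must be said. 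The paper's computation, by contrast, never leaves the defining integral of $\mathcal{I}_k^\alpha$ and needs only Fubini plus a beta-function evaluation. Your Bochner-subordination cross-check is a legitimate third route that reduces \eqref{Riesz-2} directly to \eqref{Riesz-1}, and is close in spirit to how the paper later treats general $\beta$.
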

\begin{proof}
    The integral representation of the $k$-Poisson semigroup and Fubini's theorem will give the identity
    \begin{align*}
        \int_0^{\infty} t^{\alpha-1}\mathcal{P}_k^\alpha (f)(x) dt &= C_{n,k}\int_{\mathbb{R}^n}\tau_x f(\xi) \int_0^{\infty} \frac{t^\alpha}{(t^2+\|\xi\|^2)^{\gamma+\frac{n+1}{2}}} dt w_k(\xi)d\xi.
    \end{align*}Substituting $u=\frac{t}{\|\xi\|}$ followed by $u=\tan(\theta),$ we have
    \begin{align*}
         \int_0^{\infty} t^{\alpha-1}\mathcal{P}_k^\alpha (f)(x) dt &= \frac{C_{n,k}}{2} B\left(\frac{\alpha+1}{2}, \gamma+\frac{n-\alpha}{2}\right) d_{k}^\alpha \mathcal{I}_k^\alpha(f)(x), 
    \end{align*}  where $B(\alpha, \gamma)$ is the beta function.
    The identity will follow from the properties of the beta function.
\end{proof} 
\subsection{$\beta$-semigroup} 
Analogous to the classical approaches in the Euclidean framework \cite{Sezer}, we represent the Riesz potential in terms of the  $\beta$-semigroup.  This semigroup serves as a generalization of the $k$-Poisson and $k$-heat semigroups. The $\beta$-semigroups arise in various contexts; for instance, characterizing the fractional Dunkl–Laplacian as the infinitesimal generator of the fractional Dunkl heat semigroup \cite{Rejeb}. In our analysis, we focus on studying these semigroups in the context of potential operators.

To unify the two representations of the Dunkl-Riesz potential in  \eqref{Riesz-1} and \eqref{Riesz-2}, we introduce a new parameter $\beta>0$, resulting the following general expression 
\begin{align}\label{Riesz-beta}
\mathcal{I}_k^\alpha (f)(x) = \frac{1}{c_k\Gamma(\alpha / \beta)} \int_0^{\infty} t^{\frac{\alpha}{\beta}-1} \mathfrak{B}_k^{(\beta, t)} (f)(x)\,dt,
\end{align}
where 
\begin{align} \label{beta-semigroup}
\mathfrak{B}_k^{(\beta, t)} (f)(x) = c_k \int_{\mathbb{R}^n} e^{-t\|\xi\|^\beta} \mathcal{D}_k (f)(\xi) E_k(ix, \xi) w_k(\xi)\,d\xi.
\end{align} 

The integral representation \eqref{beta-semigroup} allows a convolution representation for $\mathfrak{B}_k^{(\beta,t)}(f)$  as follows
\begin{align} \label{beta-convolution}
    \mathfrak{B}_k^{(\beta,t)}(f)(x)= \mathcal{W}_k^{(\beta,t)} \underset{k}{\ast} f(x),
\end{align}
 where $\mathcal{W}_k^{(\beta,t)}(\xi)=\mathcal{D}_k^{-1}(e^{-t\|\cdot\|^{\beta}})(\xi)$. The kernel function $\mathcal{W}_k^{(\beta,t)}(\xi)$ is well-defined because, for all $\beta, t>0$ the function $e^{-t\|\cdot\|^{\beta}}\in L_k^1(\mathbb{R}^n)$. Indeed, the spherical-polar coordinates $\xi=r\xi^\prime,$ where $\xi^\prime\in \mathbb{S}^{n-1},$ we have
 \begin{align*}
     \int_{\mathbb{R}^n} e^{-t\|\xi\|^{\beta}} w_k(\xi)d\xi =  \int_0^{\infty} \int_{\mathbb{S}^{n-1}} e^{-tr^\beta}w_k(\xi^\prime)d\sigma(\xi^\prime) r^{2\gamma+n-1}dr= 
     \frac{ d_k \beta }{t^{\beta(2\gamma+n)}} \Gamma\left(\beta(2\gamma+n)\right).
  \end{align*}  
 
\noindent Now, we are in a position to list the properties of the function $\mathcal{W}_k^{(\beta,t)}$ and the convolution operator $\mathfrak{B}_k^{(\beta,t)}$. As a special case of $\beta\in (0,2)$, a few properties can be found in \cite{Rejeb}.

\begin{proposition} \label{Proposition-main}
      Suppose $ 0<\beta <\infty, \, t>0$ and $\xi \in \mathbb{R}^n$. 
      \begin{enumerate}[$(i)$]
          \item For any $s>0$, 
          \begin{align*}
              \mathcal{W}_k^{(\beta, s t)}(s^{1/\beta}\xi) = s^{-(n+2\gamma)/\beta}\mathcal{W}_k^{(\beta, t)}(\xi).  
          \end{align*} 
          \item  For $0<\beta \le 2, \quad \mathcal{W}_k^{(\beta, t)}$ is non-negative function .\\
          \item For  even values of $\beta$, $\mathcal{W}_k^{(\beta, t)}(\xi)$ is rapidly decreasing as $\|\xi\|\rightarrow \infty.$ Moreover, for any $\beta>0$ and $t>0$, $\mathcal{W}_k^{(\beta, t)}\in \mathcal{C}_0(\mathbb{R}^n)$. In particular, 
          \begin{align} \label{Pro-Main-3}
              \lim_{\|\xi\| \rightarrow \infty} \|\xi\|^{2\gamma+n+\beta-1}\left|\mathcal{W}_k^{(\beta, t)}(\xi)\right|= t2^{\gamma+\frac{n}{2}+\beta-2} \Gamma \left( \frac{2\gamma+n+\beta-1}{2} \right) \Gamma\left( \frac{\beta+1}{2} \right).
          \end{align}
          \item For $0<\beta <\infty$ and $t>0,$ 
          \begin{align*}
               c_k\int_{\mathbb{R}^n} \mathcal{W}_k^{(\beta, t)}(\xi)w_k(\xi)d\xi =1.
          \end{align*}
          \item For $1\le p \le \infty, \, f \in L_k^p(\mathbb{R}^n)$ and for all $t>0,$ we have
          \begin{align}
              \| \mathfrak{B}_k^{(\beta,t)}(f) \|_{L_k^p(\mathbb{R}^n)} &\le C({\beta}) \|f\|_{L_k^p(\mathbb{R}^n)}, \label{prop-main-5-0}\\
          \text{where } C({\beta})= c_k \int_{\mathbb{R}^n} |\mathcal{W}_k^{(\beta,1)}(\xi)|w_k(\xi)d\xi <\infty. & \text{ If } 0< \beta \le 2, \text{ then } C({\beta})=1.
        \label{prop-main-5}  \end{align}
          \item Let $ f \in L_k^p(\mathbb{R}^n)$  for $p \in [1,\infty) $  . Then  
          \begin{align} \label{Pro.Main-6}
              \sup_{t>0}|\mathfrak{B}_k^{(\beta,t)}(f)(x)| \le c\mathcal{M}_k(f)(x),
          \end{align}
           where $\mathcal{M}_k(f)$ is the maximal function associated with Dunkl transform.\\ 
          \item  For $ f \in L_k^p(\mathbb{R}^n),\, 1\le p < \infty, $ 
          \begin{align*}
              \sup_{x\in \mathbb{R}^n}|\mathfrak{B}_k^{(\beta,t)}(f)(x)|\le \Tilde{c}t^{-\frac{2}{p\beta}(n+2\gamma-1)}\|f\|_{L_k^p(\mathbb{R}^n)}, 
          \end{align*} where $\Tilde{c}=c_k \left( \mathcal{W}_k^{(\beta,1)}(0)\right)^{1/p} \|\mathcal{W}_k^{(\beta,1)}\|_{L_k^1(\mathbb{R}^n)}^{1/q}.$
          \item For a fixed $\beta >0$, $\mathfrak{B}_k^{(\beta,t)}$ satisfies
          \begin{align} \label{beta-additivity}
              \mathfrak{B}_k^{(\beta,t)}(\mathfrak{B}_k^{(\beta,\tau)})=\mathfrak{B}_k^{(\beta,t+\tau)},\quad \text{  for all $t, \tau >0.$}
          \end{align}
          \item Suppose $f \in L_k^p(\mathbb{R}^n)$ when $1\le p < \infty$ and  $f \in C_0(\mathbb{R}^n)$ when $ p=\infty$. In either case 
          \begin{align*}
              \lim_{t\rightarrow 0} \mathfrak{B}_k^{(\beta,t)}(f) (x)= f(x) . 
          \end{align*}
         \item For $\beta, t >0$, we have
         \begin{align} \label{Prop-Main-xi}
\mathfrak{B}_k^{(\beta,t)}\left(I_k^\alpha\right)=I_k^\alpha\left(\mathfrak{B}_k^{(\beta,t)}\right).
         \end{align}
      \end{enumerate}
\end{proposition}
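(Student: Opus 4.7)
The unifying tool is the Dunkl-side definition $\mathcal{D}_k(\mathcal{W}_k^{(\beta,t)})(\xi) = e^{-t\|\xi\|^\beta}$ together with the convolution representation \eqref{beta-convolution}, from which several items follow almost formally. Parts (i), (iv), (viii), (x) are direct Fourier-side consequences: (i) comes from the substitution $\xi \mapsto s^{1/\beta}\xi$ in the integral for $\mathcal{D}_k^{-1}(e^{-st\|\cdot\|^\beta})$, using that $w_k$ is homogeneous of degree $2\gamma$; (iv) follows by evaluating $\mathcal{D}_k(\mathcal{W}_k^{(\beta,t)})$ at $\xi=0$; (viii) is immediate since under $\mathcal{D}_k$ composing $\beta$-semigroups becomes multiplying exponentials, $e^{-t\|\xi\|^\beta}\cdot e^{-\tau\|\xi\|^\beta} = e^{-(t+\tau)\|\xi\|^\beta}$; for (x) one uses $\mathcal{D}_k(\mathcal{I}_k^\alpha f)(\xi) = \|\xi\|^{-\alpha}\mathcal{D}_k(f)(\xi)$ and observes that the multipliers $e^{-t\|\xi\|^\beta}$ and $\|\xi\|^{-\alpha}$ commute. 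Alternatively (x) can be extracted from \eqref{Riesz-beta} and the semigroup property (viii) via Fubini.

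For positivity (ii), the case $\beta = 2$ is the known Dunkl heat kernel positivity. For $0 < \beta < 2$, I would apply Bochner subordination: write $e^{-t\|\xi\|^\beta} = \int_0^\infty e^{-s\|\xi\|^2}\,d\mu_{\beta/2,t}(s)$ for a positive stable measure $\mu_{\beta/2,t}$, and commute this integral with $\mathcal{D}_k^{-1}$ by Fubini to display $\mathcal{W}_k^{(\beta,t)}$ as a positive superposition of non-negative heat kernels $h_k^s$. For (iii), if $\beta$ is even then $e^{-t\|\cdot\|^\beta} \in \mathcal{S}(\mathbb{R}^n)$ and hence $\mathcal{W}_k^{(\beta,t)} \in \mathcal{S}(\mathbb{R}^n)$; for general $\beta > 0$ the membership $\mathcal{W}_k^{(\beta,t)} \in \mathcal{C}_0(\mathbb{R}^n)$ follows from the Riemann--Lebesgue lemma, since $e^{-t\|\cdot\|^\beta} \in L_k^1(\mathbb{R}^n)$. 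The explicit asymptotic \eqref{Pro-Main-3} requires reducing the inverse Dunkl transform of the radial function $e^{-t\|\cdot\|^\beta}$ to a one-dimensional Hankel-type integral, rescaling $r = s/\|\xi\|$, and extracting the leading term as $\|\xi\|\to\infty$ by a Watson-type asymptotic expansion, keeping track of constants so that the Gamma-function prefactor in \eqref{Pro-Main-3} emerges.

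The operator bounds and continuity are more routine. Part (v) follows from Theorem \ref{D.C.continuity} since $\mathcal{W}_k^{(\beta,t)}$ is radial (inverse Dunkl transform of a radial function) and the $L_k^1$-norm is scale-invariant by (i); the improvement $C(\beta) = 1$ for $0 < \beta \le 2$ is then forced by combining the positivity (ii) with the unit mass (iv). For (vi), I would dominate $\mathcal{W}_k^{(\beta,t)}$ by a decreasing radial majorant, decompose dyadically in annuli, and collapse the resulting sum into a constant multiple of $\mathcal{M}_k(f)$. For (vii), Hölder's inequality gives $|\mathfrak{B}_k^{(\beta,t)}(f)(x)| \le \|f\|_{L_k^p} \|\mathcal{W}_k^{(\beta,t)}\|_{L_k^{p'}}$, and the scaling from (i) converts the $L_k^{p'}$-norm into the stated $t$-power. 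Finally, (ix) is the classical approximate-identity argument: the unit mass (iv) together with the scaling in (i) show that $\mathcal{W}_k^{(\beta,t)}$ concentrates at the origin as $t\to 0^+$, and combining this with the uniform bound (v) yields convergence in $L_k^p$-norm, and uniform convergence when $f\in \mathcal{C}_0(\mathbb{R}^n)$.

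\textbf{Main obstacle.} The genuinely delicate step is the precise asymptotic \eqref{Pro-Main-3} in part (iii), which must produce exactly the displayed Gamma-function constants and requires a careful Hankel-type asymptotic analysis adapted to the Dunkl weight $w_k$. A secondary technical point is justifying the interchange of the subordination integral with $\mathcal{D}_k^{-1}$ in (ii), which is handled by Fubini once one checks joint integrability of the stable density against $e^{-s\|\xi\|^2}\,w_k(\xi)$; everything else reduces to short consequences of (i)--(iv) once those four are in hand.
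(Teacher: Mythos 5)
Your proposal is essentially correct and, for most items, follows the same route as the paper: (i), (iv), (viii) and (x) by Fourier-side identities and substitution; (ii) for $0<\beta<2$ by Bochner subordination (the paper simply cites Rejeb, whose argument is exactly the subordination you describe); (iii) by Schwartz invariance for even $\beta$, Riemann--Lebesgue in general, and a reduction of the radial inverse Dunkl transform to a one-dimensional Hankel integral for the tail asymptotics (the paper makes this concrete via the Bessel derivative identity, integration by parts, the substitution $z=(rs)^\beta$, and an evaluation of the limiting integral borrowed from Aliev--Rubin, rather than a generic Watson expansion); (v) by Young's inequality plus scale invariance of the $L_k^1$-norm, with $C(\beta)=1$ forced by positivity and unit mass; (ix) by the approximate-identity mechanism (the paper cites Thangavelu--Xu, Theorem 4.2). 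The main structural difference is that where the paper outsources the hard analytic inputs to the Dunkl literature, you propose to reprove them inline, and in two places this hides genuine Dunkl-specific difficulty rather than routine work.

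The most serious of these is (vi). The classical ``decreasing radial majorant plus dyadic annuli'' argument does not transfer verbatim, because $f\underset{k}{\ast}\phi(x)=\int f(y)\,\tau_x\phi(-y)\,w_k(y)\,dy$ involves the Dunkl translation $\tau_x$, which is not composition with a point map and is not known to preserve positivity for general reflection groups; the collapsing-annuli step, which in the Euclidean case rests on monotonicity of the translated kernel along annuli centered at $x$, has no direct analogue. The domination $\sup_{t>0}|f\underset{k}{\ast}\phi_t|\le c\,\mathcal{M}_k f$ for integrable radial decreasing $\phi$ is precisely Theorem 6.2 of Thangavelu--Xu, which the paper invokes, and whose proof is substantially harder than the Euclidean one. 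You would also need to verify that $\mathcal{W}_k^{(\beta,1)}$ actually admits an integrable radial decreasing majorant, which for small $\beta$ must be checked against the tail decay in \eqref{Pro-Main-3}. A smaller but real omission is in (vii): H\"older applied to the convolution gives $\|f\|_{L_k^p}\,\|\tau_x\mathcal{W}_k^{(\beta,t)}\|_{L_k^{p'}}$, not $\|f\|_{L_k^p}\,\|\mathcal{W}_k^{(\beta,t)}\|_{L_k^{p'}}$, so you must additionally invoke the boundedness of the Dunkl translation on radial $L_k^{p'}$ (Theorem \ref{C.Trns}) before the scaling from (i) can be applied; the paper does this explicitly.
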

\begin{proof}
   The property $(i)$ is an easy consequence of the definition of $\mathcal{W}_k^{(\beta,t)}$ and the substitution  $y^\prime=s^{1/\beta}y$. 
  The positivity of $\mathcal{W}_k^{(\beta,t)}$ is straightforward for $\beta=1$, $2$. For the case $0<\beta<2$, the result is proved in \cite{Rejeb}. When $\beta=2m$, $m \in \mathbb{N}$, the function $e^{-t\|\xi\|^\beta}$ belongs to the class of Schwartz functions, and the invariance of $\mathcal{S}(\mathbb{R}^n)$ under the Dunkl transform gives the decay property to $\mathcal{W}_k^{(\beta, t)}$. Since $e^{-t\|\cdot\|^\beta} \in L_k^1(\mathbb{R}^n)$, the Riemann–Lebesgue lemma implies that $\mathcal{W}_k^{(\beta,t)} \in \mathcal{C}_0(\mathbb{R}^n)$. The property $(iii)$ partially follows from these observations. Now, for the non-even values of $\beta$, we follow the method of Aliev and Rubin \cite{Aliev-2008}. Invoking the relation between the Dunkl and Hankel transform of radial function \cite{Kobayashi, Rosler-2008}, we have 
    \begin{align}
        \mathcal{W}_k^{(\beta,t)}(\xi) &= 2^{-(\gamma+\frac{n}{2}-1)} \mathcal{H}_{2,\gamma+\frac{n}{2}-1} (e^{-tr^{\beta}})(\|\xi\|) \notag \\
        &=  {r}^{-v} \int_0^\infty e^{-ts^{\beta}} J_v(rs)s^{v+1}ds, \label{Dunkl-Hankel}
    \end{align} where $r=\|\xi\|$ and $v=\gamma+\frac{n}{2}-1$. For convenience, we denote the above integral by $W(r)$. Using the derivative of the Bessel function, we obtain the following identity: 
    \begin{align*}
        W(r)= r^{-(2v+1)}\int_0^\infty e^{-ts^{\beta}} \frac{d}{ds}\left( (rs)^{v+1} J_{v+1}(rs)\right)ds.
    \end{align*} The integration by parts and the substitution $z=(rs)^\beta$ will give
    \begin{align*}
        W(r)= t r^{-(2v+\beta+1)}\int_0^\infty e^{-tr^{-\beta} z}J_{v+1}(z^{1/\beta}) z^{\frac{v+1}{\beta}}dz.
        \end{align*} 
        Now $ r^{2v+\beta+1} W(r) = t I_{r^{-\beta}}$, where $I_\delta=\int_0^\infty e^{-t\delta z}J_{v+1}(z^{1/\beta}) z^{\frac{v+1}{\beta}}dz$. The calculation will be finalized by the value of $I_0=\lim_{\delta\rightarrow 0}I_\delta.$ A similar calculation in \cite{Aliev-2008} gives the value of $I_0= 2^{\gamma+\frac{n}{2}+\beta-2} \Gamma \left( \frac{2\gamma+n+\beta-1}{2} \right) \Gamma\left( \frac{\beta+1}{2} \right)$. Property $(iv)$ is the application of the inversion formula of the Dunkl transform. The property $(v)$ is due to the boundedness of the convolution operator. For $0<\beta \leq 2$, the constant $C(\beta)$ can be deduced from property $(iv)$, and for $\beta>2,$ the unified constant for all $t>0$ is obtained by a change of variable. The property $(vi)$ is a direct result of \cite[Theorem 6.2]{Thangavelu}. For property $(vii),$ we consider the relation
        $ \mathcal{W}_k^{(\beta,t)}(\xi)=t^{-\frac{1}{\beta}(n+2\gamma)}W_k^\beta(t^{-1/\beta}\xi),$ where $W_k^\beta(\xi)=\mathcal{W}_k^{(\beta,1)}(\xi).$ The required bound in $t$ can be achieved by applying the convolution definition of $\beta$-semigroup, H\"olders inequality, and Theorem \ref{C.Trns}. Property $(viii)$ can be deduced from a straightforward application of the Dunkl transform over the convolution product.  Property $(ix)$ can be deduced from \cite[Theorem 4.2.]{Thangavelu}.     In the end, the commutativity relation $(x)$ follows from the definition.
\end{proof}
\begin{remark}
     From properties $(viii) $ and $(ix)$ of the above proposition, we see that the convolution operator $\mathfrak{B}_k^{(\beta,t)}$  satisfies the semigroup property. So, we call it a $\beta$-semigroup.
 \end{remark}
\subsection{Inversion of Dunkl-Riesz potential} We derive an inversion formula for the Dunkl-Riesz potential associated with the general reflection group. In \cite{Liu}, Lie et al. established the inversion formula for the special case $W = \mathbb{Z}_2^n$. Using the methods developed in \cite{Liu, Sezer} and employing the wavelet approach \cite{Rubin-1996}, we extend these results to a more general setting.

In the following definition, we define a wavelet-like transform that plays a central role in the derivation of the inversion formula.

\begin{definition}
    A finite Borel measure $\nu$ on $[0, \infty)$, satisfying  $\nu([0, \infty)) = 0$ and $\int_0^\infty sd|\nu|(s)<\infty$ is called the wavelet measure. For $\beta, t >0$ and $f \in \mathcal{S}(\mathbb{R}^n)$, the wavelet-like transform $\mathbf{W}_k^{(\beta,t)}$ associated with $\beta$-semigroup  is defined as 
\begin{align} \label{wavlet}
\mathbf{W}_k^{(\beta,t)} (f)(\xi) = \int_0^\infty \mathfrak{B}_k^{(\beta, st)}(f)(\xi) d\nu(s).
\end{align}
\end{definition} 
The impending result discusses the continuity of the wavelet-like transform over $L_k^p(\mathbb{R}^n)$.
\begin{proposition}
    Let $\beta , t >0$ and $f\in L_k^p(\mathbb{R}^n)$, for $1\leq p \leq \infty$. Then 
    $\mathbf{W}_k^{(\beta, t)}$ is a bounded operator on $L_k^p(\mathbb{R}^n)$  with $$\|\mathbf{W}_k^{(\beta,t)} (f)\|_{L_k^p(\mathbb{R}^n)} \leq C({\beta}) \|\nu\| \|f\|_{L_k^p(\mathbb{R}^n)},$$ where $C(\beta)$ is given in \eqref{prop-main-5}.
\end{proposition}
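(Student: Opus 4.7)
The proof plan is a direct application of Minkowski's integral inequality combined with the uniform bound on the $\beta$-semigroup established in Proposition~\ref{Proposition-main}(v). The statement is essentially a boundedness-under-averaging result: we are averaging a uniformly bounded family of operators $\{\mathfrak{B}_k^{(\beta,st)}\}_{s>0}$ against a finite Borel measure $\nu$, so the resulting operator should inherit the bound multiplied by the total variation $\|\nu\|$ of the measure.

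First I would write out the $L_k^p(\mathbb{R}^n)$-norm of $\mathbf{W}_k^{(\beta,t)}(f)$ using the definition \eqref{wavlet} as a $\nu$-integral of the semigroup, namely
\begin{equation*}
\|\mathbf{W}_k^{(\beta,t)}(f)\|_{L_k^p(\mathbb{R}^n)} = \left\| \int_0^\infty \mathfrak{B}_k^{(\beta,st)}(f)(\cdot)\, d\nu(s) \right\|_{L_k^p(\mathbb{R}^n)}.
\end{equation*}
For $1 \leq p < \infty$, Minkowski's integral inequality (applied to the product measure $d|\nu|(s)\, w_k(x)dx$) lets me bring the spatial norm inside the $s$-integral, giving the bound
\begin{equation*}
\|\mathbf{W}_k^{(\beta,t)}(f)\|_{L_k^p(\mathbb{R}^n)} \leq \int_0^\infty \|\mathfrak{B}_k^{(\beta,st)}(f)\|_{L_k^p(\mathbb{R}^n)}\, d|\nu|(s).
\end{equation*}

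Next I would invoke the bound \eqref{prop-main-5-0} from Proposition~\ref{Proposition-main}(v), which gives $\|\mathfrak{B}_k^{(\beta,st)}(f)\|_{L_k^p(\mathbb{R}^n)} \leq C(\beta) \|f\|_{L_k^p(\mathbb{R}^n)}$ uniformly in the time parameter $st > 0$. Substituting this into the Minkowski estimate and pulling out the constant $C(\beta)\|f\|_{L_k^p(\mathbb{R}^n)}$ yields the claim, since $\int_0^\infty d|\nu|(s) = \|\nu\|$. For the endpoint $p=\infty$, I would argue directly using the essential-supremum pointwise bound $|\mathfrak{B}_k^{(\beta,st)}(f)(\xi)| \leq C(\beta)\|f\|_{L_k^\infty(\mathbb{R}^n)}$, inserted inside the $\nu$-integral.

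I do not anticipate genuine obstacles here; the only technical care-point is the justification of Minkowski's inequality, which requires the joint measurability of $(s,\xi) \mapsto \mathfrak{B}_k^{(\beta,st)}(f)(\xi)$ and the finiteness of $\|\nu\|$. Joint measurability follows from the convolution representation \eqref{beta-convolution} and continuity in $s$ of $\mathcal{W}_k^{(\beta,st)}$, while $\|\nu\| < \infty$ is part of the definition of a wavelet measure. Hence the proof reduces to a one-line chain of inequalities once these preliminaries are noted.
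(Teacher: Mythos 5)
Your proposal is correct and follows exactly the paper's own argument: Minkowski's integral inequality to move the $L_k^p$-norm inside the $\nu$-integral, followed by the uniform bound \eqref{prop-main-5-0} and the identity $\int_0^\infty d|\nu|(s)=\|\nu\|$. The extra remarks on joint measurability and the $p=\infty$ endpoint are sound refinements of the same one-line proof the paper gives.
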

\begin{proof}
    The proof is a consequence of Minkowski's integral inequality and \eqref{prop-main-5-0}. 
\end{proof}
\noindent We provide a few lemmas and definitions to establish the inversion formula for the Riesz potential using $\mathbf{W}_k^{(\beta,t)}$. In the sequel, we consider $\nu$ as a wavelet measure.
\begin{lemma} \label{Lemma-R-1}
     Let $0< \alpha < n+2\gamma$, $\beta >0 $, $f\in L_k^p(\mathbb{R}^n)$ with $1\leq p < \frac{n+2\gamma}{\alpha}$ and $g=\mathcal{I}_k^\alpha(f)$. Then $\mathbf{W}_k^{(\beta, t)}$ can be represented as 
     \begin{align*}
         \mathbf{W}_k^{(\beta, t)}(g)(\xi)= \frac{1}{\Gamma(\alpha / \beta)} \int_0^\infty \int_{0}^\infty (r-st)_+^{\frac{\alpha}{\beta}-1} \mathfrak{B}_k^{(\beta, r)}(f)(\xi) dr d\nu(s),
    \end{align*}  
 where $ (r-st)_+ =
\begin{cases} 
  0 & \text{ if } r-st \leq 0,\\ 
  r-st & \text{ if } r-st \geq 0.   
\end{cases} $ 
\end{lemma}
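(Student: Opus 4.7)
The plan is to start from the definition of the wavelet-like transform applied to $g=\mathcal{I}_k^\alpha(f)$, use the commutation identity \eqref{Prop-Main-xi} to move the $\beta$-semigroup through the Riesz potential, then expand the resulting Riesz potential via its $\beta$-semigroup representation \eqref{Riesz-beta}. At that stage the integrand contains a composition $\mathfrak{B}_k^{(\beta,\tau)}\mathfrak{B}_k^{(\beta,st)}$, which collapses to a single operator $\mathfrak{B}_k^{(\beta,\tau+st)}$ by the semigroup additivity \eqref{beta-additivity}. A change of variable then produces the truncated power $(r-st)_+^{\alpha/\beta-1}$, and a Fubini step rearranges the integrals into the required form.

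Concretely, starting from \eqref{wavlet} and \eqref{Prop-Main-xi} I would write
\begin{align*}
\mathbf{W}_k^{(\beta,t)}(g)(\xi)
&=\int_0^\infty \mathfrak{B}_k^{(\beta,st)}\bigl(\mathcal{I}_k^\alpha(f)\bigr)(\xi)\,d\nu(s)\\
&=\int_0^\infty \mathcal{I}_k^\alpha\bigl(\mathfrak{B}_k^{(\beta,st)}(f)\bigr)(\xi)\,d\nu(s).
\end{align*}
Inserting the $\beta$-semigroup representation \eqref{Riesz-beta} for the inner Riesz potential and then using \eqref{beta-additivity} yields
\begin{align*}
\mathbf{W}_k^{(\beta,t)}(g)(\xi)
&=\frac{1}{c_k\Gamma(\alpha/\beta)}\int_0^\infty\!\!\int_0^\infty \tau^{\alpha/\beta-1}\,
  \mathfrak{B}_k^{(\beta,\tau+st)}(f)(\xi)\,d\tau\,d\nu(s).
\end{align*}
Substituting $r=\tau+st$ in the inner integral, so that $\tau=r-st\ge 0$ corresponds to $r\ge st$, gives exactly the truncated power $(r-st)_+^{\alpha/\beta-1}$ with $r\in(0,\infty)$, and delivers the claimed identity after a Fubini swap of $ds$ and $dr$.

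The main obstacle is the Fubini/Tonelli justification, both for the commutation step (which uses that $\mathfrak{B}_k^{(\beta,st)}$ and $\mathcal{I}_k^\alpha$ can be exchanged on $L_k^p$-functions with $1\le p<(n+2\gamma)/\alpha$) and for the final interchange of the $s$- and $r$-integrations. To handle the latter I would control $\mathfrak{B}_k^{(\beta,r)}(f)(\xi)$ by the two complementary bounds available from Proposition~\ref{Proposition-main}: property (v) gives $\|\mathfrak{B}_k^{(\beta,r)}(f)\|_{L_k^p}\le C(\beta)\|f\|_{L_k^p}$ uniformly in $r$, handling large $r$ combined with the decay $\tau^{\alpha/\beta-1}$ after integration against the wavelet measure, while property (vii) provides the decay $\|\mathfrak{B}_k^{(\beta,r)}(f)\|_{\infty}\lesssim r^{-2(n+2\gamma-1)/(p\beta)}\|f\|_{L_k^p}$ for large $r$, controlling integrability at infinity. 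The finiteness assumption $\int_0^\infty sd|\nu|(s)<\infty$ on the wavelet measure together with $p<(n+2\gamma)/\alpha$ ensures that $\int_0^\infty\!\int_0^\infty \tau^{\alpha/\beta-1}|\mathfrak{B}_k^{(\beta,\tau+st)}(f)(\xi)|\,d\tau\,d|\nu|(s)$ is finite, so Fubini applies and the change of variable is legitimate.
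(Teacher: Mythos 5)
Your argument is exactly the paper's: the authors also start from \eqref{wavlet}, commute $\mathfrak{B}_k^{(\beta,st)}$ with $\mathcal{I}_k^\alpha$ via \eqref{Prop-Main-xi}, insert the representation \eqref{Riesz-beta}, apply the semigroup law \eqref{beta-additivity}, and substitute $r=\tau+st$ to produce $(r-st)_+^{\alpha/\beta-1}$; your added care with the Fubini justification only strengthens the same proof. The only discrepancy is the constant $c_k$ you carry from \eqref{Riesz-beta}, which the paper's statement of the lemma silently drops — a normalization inconsistency in the paper rather than an error on your part.
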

\begin{proof}
    Considering \eqref{Prop-Main-xi} in \eqref{wavlet}, we obtain
    \begin{align*}
        \mathbf{W}_k^{(\beta, t)}(g)(\xi)&= \int_0^\infty \mathfrak{B}_k^{(\beta, st)}\left(\mathcal{I}_k^\alpha(f)\right)(\xi) d\nu(s) =\int_0^\infty \mathcal{I}_k^\alpha \left(\mathfrak{B}_k^{(\beta, st)}(f)\right)(\xi) d\nu(s).
    \end{align*} 
    Using the identities \eqref{Riesz-beta}, \eqref{beta-additivity} and the substitution $r=\tau+st$, we have the desired result
    \begin{align*}
        \int_0^\infty \mathcal{I}_k^\alpha\left( \mathfrak{B}_k^{(\beta, st)}(f)\right)(\xi) d\nu(s) &= \frac{1}{\Gamma(\alpha / \beta)}
        \int_0^\infty \int_{st}^\infty (r-st)^{\frac{\alpha}{\beta}-1} \mathfrak{B}_k^{(\beta, r)}(f)(\xi) dr d\nu(s)\\
        & = \frac{1}{\Gamma(\alpha / \beta)} \int_0^\infty \int_{0}^\infty (r-st)_+^{\frac{\alpha}{\beta}-1} \mathfrak{B}_k^{(\beta, r)}(f)(\xi) dr d\nu(s).
    \end{align*}  
\end{proof}
\begin{definition}
  For $\epsilon >0$ and $f \in \mathcal{S}(\mathbb{R}^n)$, we define the truncated operator $\mathfrak{T}_\epsilon$ as 
  \begin{align}\label{truncated function-1}
      \mathfrak{T}_\epsilon (f)(\xi)=  \int_\epsilon^\infty
       \mathbf{W}_k^{(\beta,t)}(f)(\xi) t^{-\frac{\alpha}{\beta}-1}dt.
  \end{align}
\end{definition}
\begin{lemma}\label{Lemma-R-2}
    Let $0< \alpha < n+2\gamma$, $\beta >\alpha $, $f\in L_k^p(\mathbb{R}^n)$ with $1\leq p < \frac{n+2\gamma}{\alpha}$ and $g=\mathcal{I}_k^\alpha(f)$. Then
    \begin{align*}
        \mathfrak{T}_\epsilon (g)(\xi)= \int_0^\infty \mathcal{B}_k^{(\beta, \epsilon s)}(f)(\xi)\mathcal{K}_{\alpha/\beta}(s)ds,
    \end{align*} where $\mathcal{K}_{\theta}(s)= \frac{1}{s} \frac{1}{\Gamma(\theta+1)}\int_0^s (s-r)^\theta d\nu(r)$, with $\theta=\alpha/\beta$. 
\end{lemma}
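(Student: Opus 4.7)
The plan is to substitute the double-integral representation of $\mathbf{W}_k^{(\beta,t)}(g)$ from Lemma~\ref{Lemma-R-1} into the definition~\eqref{truncated function-1}, interchange the order of integration so that the $t$-variable is handled first, evaluate the resulting one-dimensional kernel in closed form, and finish with a dilation that absorbs $\epsilon$ into the semigroup time.

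Setting $\theta = \alpha/\beta$, the insertion yields
\begin{align*}
\mathfrak{T}_\epsilon(g)(\xi) = \frac{1}{\Gamma(\theta)} \int_0^\infty \int_0^\infty \mathfrak{B}_k^{(\beta,r)}(f)(\xi)\, I(r,s)\, dr\, d\nu(s), \quad I(r,s) := \int_\epsilon^\infty t^{-\theta-1}(r-st)_+^{\theta-1}\, dt.
\end{align*}
The inner kernel $I(r,s)$ vanishes for $r \leq s\epsilon$; otherwise I evaluate it by two successive substitutions. First, $w = st/r$ converts the integral into the incomplete Beta-type form $r^{-1} s^\theta \int_{s\epsilon/r}^1 w^{-\theta-1}(1-w)^{\theta-1}\, dw$. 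Then $w = (1+y)^{-1}$ reduces the integrand to the monomial $y^{\theta-1}$, yielding the closed form
\begin{align*}
I(r,s) = \frac{(r-s\epsilon)_+^{\theta}}{\theta\, r\, \epsilon^{\theta}}.
\end{align*}
Substituting back and rescaling through $r = \epsilon u$ (so that $(r-s\epsilon)^\theta = \epsilon^\theta (u-s)^\theta$ and $dr = \epsilon\, du$), and using $\Gamma(\theta+1) = \theta\, \Gamma(\theta)$, the right-hand side becomes
\begin{align*}
\int_0^\infty \mathfrak{B}_k^{(\beta,\epsilon u)}(f)(\xi)\, \frac{1}{u\, \Gamma(\theta+1)} \int_0^u (u-s)^\theta\, d\nu(s)\, du,
\end{align*}
which is $\int_0^\infty \mathfrak{B}_k^{(\beta,\epsilon u)}(f)(\xi)\, \mathcal{K}_\theta(u)\, du$ after renaming $u \mapsto s$ in the outer variable.

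The main technical point is to justify the Fubini interchange that transports the $t$-integral past the $r$- and $\nu(s)$-integrals. The hypothesis $\beta > \alpha$ places $\theta$ in $(0,1)$, so $(r-st)^{\theta-1}$ has only an integrable singularity at $t = r/s$ and $t^{-\theta-1}$ decays at infinity, making $I(r,s)$ absolutely finite. To control the full triple integral, I would split the range of $r$ at $r = 1$ and invoke Proposition~\ref{Proposition-main}(v) and (vii) to bound $|\mathfrak{B}_k^{(\beta,r)}(f)(\xi)|$: part~(vii) handles small $r$ through its power-type $L^p$-to-$L^\infty$ estimate, while part~(v) handles large $r$ via the uniform $L_k^p$-bound on the semigroup. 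Combined with the finite moments built into the definition of a wavelet measure, these estimates secure absolute convergence of the triple integral and legitimize Tonelli; once the swap is allowed, the remaining computation is elementary single-variable calculus.
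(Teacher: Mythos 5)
Your proposal is correct and takes essentially the same route as the paper: insert the representation of Lemma~\ref{Lemma-R-1} into \eqref{truncated function-1}, apply Fubini, evaluate the inner $t$-integral in closed form (your value $I(r,s)=(r-s\epsilon)_+^{\theta}/(\theta r\epsilon^{\theta})$ agrees with the paper's cited identity $\int_{1}^{\sigma}(\sigma-t)^{\theta-1}t^{-\theta-1}dt=\frac{\Gamma(\theta)}{\Gamma(\theta+1)}\frac{(\sigma-1)^{\theta}}{\sigma}$), and rescale $r=\epsilon u$; you are in fact more careful than the paper, which does not justify the interchange at all. One small slip in that justification: the roles of Proposition~\ref{Proposition-main}(v) and (vii) are reversed --- the uniform bound (v) (or the maximal-function bound (vi)) is what controls the integrable singularity $r^{\theta-1}$ near $r=0$, whereas the power-type estimate (vii), which blows up as $r\to 0$ but decays as $r\to\infty$, is the ingredient needed to tame the non-integrable tail $r^{\theta-1}$ at infinity.
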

\begin{proof}
    In view of Lemma \ref{Lemma-R-1}, the operator $ \mathfrak{T}_\epsilon$ can be written as
    \begin{align*}
         \mathfrak{T}_\epsilon\mathcal{I}_k^\alpha (f)(\xi) &= \frac{1}{\Gamma(\alpha / \beta)} \int_\epsilon^\infty 
          \int_0^\infty \int_{0}^\infty (r-st)_+^{\frac{\alpha}{\beta}-1} \mathfrak{B}_k^{(\beta, r)}(f)(\xi) dr d\nu(s) t^{-\frac{\alpha}{\beta}-1}dt.
    \end{align*}
    Then Fubini's theorem and the application of the formula 
$\int_{1}^\tau (r-t)^{\frac{\alpha}{\beta}-1} t^{-\frac{\alpha}{\beta}-1}dt =\frac{\Gamma(\alpha/\beta)}{\Gamma(\alpha/\beta +1)}\frac{(s-1)^{\alpha/\beta}}{s}$ for $s>1$ gives the required result.
\end{proof}
\begin{remark}
  The function $(I_{0^+}^{\theta+1}\nu)s=\frac{1}{\Gamma(\theta+1)}\int_0^s (s-r)^\theta d\nu(r)$ is known as the Riemann-Liouville integral of the measure $\nu$ of order $\theta+1$. For more details, one can refer to  \cite{Oldham}.
\end{remark}
Now, we establish the inversion formula for the Dunkl-Riesz potential. 
\begin{theorem} \label{Inversion-Riesz}
     Let $0< \alpha < n+2\gamma$, $\beta >0 $, $f\in L_k^p(\mathbb{R}^n)$ with $1\leq p < \frac{n+2\gamma}{\alpha}$ 
      and $\nu$ be a wavelet measure satisfying the conditions 
      \begin{align*}
          (i)\quad & \int_0^\infty s^{l}d|\nu|(s) < \infty \quad \text{for some }  l > \frac{\alpha}{\beta}, \\
          (ii)\quad  & \int_0^\infty s^m d\nu(s) = 0, \text{ 
 for} m = 0, 1, \ldots, \left\lfloor \frac{\alpha}{\beta} \right\rfloor,
\end{align*}
where \( \lfloor \cdot \rfloor \) denotes the greatest integer function.
     If $g=\mathcal{I}_k^\alpha (f),$ then 
     \begin{align*}
     \int_0^\infty  \mathbf{W}_k^{(\beta,t)}(g)(\xi) t^{-\frac{\alpha}{\beta}-1}dt = \lim_{\epsilon\longrightarrow  0}
       \mathfrak{T}_\epsilon (g)(\xi) =  C\left(\frac{\alpha}{\beta}, \nu\right)f(\xi),
     \end{align*} where 

\begin{align*}
       \mathbf{W}_k^{(\beta,t)} (f)(\xi) = \int_0^\infty  \mathfrak{B}_k^{(\beta, st)}(f)(\xi) d\nu(s),\quad 
   \mathfrak{T}_\epsilon (g)(\xi) = \int_\epsilon^\infty  \mathbf{W}_k^{(\beta,t)}(g)(\xi) t^{-\frac{\alpha}{\beta}-1}dt
\end{align*} and 
\[ 
C(r,\nu) = \int_0^{\infty} \frac{\mu(t)}{t^{1+r}}dt =  
\begin{cases} 
  \Gamma(-r)\int_0^{\infty}s^{r}d\nu(s) & \text{ if } r \notin \mathbb{N}_0 \\ 
  \frac{(-1)^{r+1}}{r !} \int_0^{\infty}s^{r} \ln{s}d\nu(s) & \text{ if } r \in \mathbb{N}_0. \\  
\end{cases} 
\] 
The function \( \mu(t) = \int_0^{\infty} e^{-ts} \, d\nu(s) \) represents the Laplace transform of the wavelet measure \( \nu \). The convergence of the associated integral expression is considered in the  \( L_k^p(\mathbb{R}^n) \)-norm. Moreover, if \( f \in \mathcal{C}_0(\mathbb{R}^n) \cap L_k^p(\mathbb{R}^n) \), the convergence is uniform on \( \mathbb{R}^n \).
\end{theorem}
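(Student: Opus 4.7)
The approach is to reduce the claimed inversion to a kernel representation that writes $\mathfrak{T}_\epsilon(g)$ as a superposition of $\mathfrak{B}_k^{(\beta,\epsilon s)}(f)$ against an explicit weight in $s$, and then exploit the strong continuity of the $\beta$-semigroup at $t=0^+$ given by Proposition~\ref{Proposition-main}(ix). Concretely, I would first extend Lemma~\ref{Lemma-R-2} to all $\beta>0$, obtaining
\[
\mathfrak{T}_\epsilon(g)(\xi) = \int_0^\infty \mathfrak{B}_k^{(\beta,\epsilon s)}(f)(\xi)\,\mathcal{K}_{\alpha/\beta}(s)\,ds,
\]
starting from Lemma~\ref{Lemma-R-1}, substituting it into the definition of $\mathfrak{T}_\epsilon$, applying Fubini, evaluating the inner $t$-integral via the beta-function computation appearing in the proof of Lemma~\ref{Lemma-R-2}, and rescaling by $r=\epsilon s\rho$. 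The vanishing moment condition~(ii) on $\nu$ is precisely what justifies the iterated integrals and the integrability of $|\mathcal{K}_{\alpha/\beta}|$ at infinity when $\lfloor\alpha/\beta\rfloor\ge 1$, while condition~(i) controls it near the origin.

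The second ingredient is the constant identity $\int_0^\infty \mathcal{K}_{\alpha/\beta}(s)\,ds = C(\alpha/\beta,\nu)$. I would verify this by rewriting the Riemann--Liouville kernel, exchanging the order of integration, and recognizing the resulting expression, via the Laplace representation $\mu(t)=\int_0^\infty e^{-ts}\,d\nu(s)$, as the regularized integral $\int_0^\infty\mu(t)\,t^{-1-\alpha/\beta}\,dt$. Condition~(ii) forces $\mu(t)=O(t^{\lfloor\alpha/\beta\rfloor+1})$ as $t\to 0^+$, ensuring convergence at the origin; the cases $\alpha/\beta\notin\mathbb{N}_0$ and $\alpha/\beta\in\mathbb{N}_0$ are treated separately, the latter requiring a Hadamard-type regularization that produces the logarithmic form of $C(\alpha/\beta,\nu)$.

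With these two ingredients in hand, the inversion follows from the estimate
\[
\|\mathfrak{T}_\epsilon(g) - C(\alpha/\beta,\nu)f\|_{L_k^p(\mathbb{R}^n)} \le \int_0^\infty \|\mathfrak{B}_k^{(\beta,\epsilon s)}(f) - f\|_{L_k^p(\mathbb{R}^n)}\,|\mathcal{K}_{\alpha/\beta}(s)|\,ds,
\]
obtained by Minkowski's integral inequality applied to the difference written as $\int_0^\infty [\mathfrak{B}_k^{(\beta,\epsilon s)}(f)-f]\,\mathcal{K}_{\alpha/\beta}(s)\,ds$. The integrand is dominated by $2C(\beta)\|f\|_{L_k^p}\,|\mathcal{K}_{\alpha/\beta}(s)|$ via Proposition~\ref{Proposition-main}(v), which is integrable under (i)--(ii), and tends pointwise to zero in $s$ by Proposition~\ref{Proposition-main}(ix). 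Dominated convergence finishes the $L_k^p$-statement. For $f\in\mathcal{C}_0(\mathbb{R}^n)\cap L_k^p(\mathbb{R}^n)$, the same argument with the sup-norm in place of the $L_k^p$-norm yields uniform convergence, using the uniform convergence $\mathfrak{B}_k^{(\beta,t)}(f)\to f$ that follows from uniform continuity of $f$ together with the approximate-identity property of $\mathcal{W}_k^{(\beta,t)}$. The principal obstacle throughout is the identification of the constant $C(\alpha/\beta,\nu)$ via Hadamard regularization and the delicate use of the vanishing moment hypotheses to make the iterated integrals absolutely convergent; the extension of the kernel representation from $\beta>\alpha$ to arbitrary $\beta>0$ is a secondary, but technically related, difficulty.
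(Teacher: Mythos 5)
Your proposal follows essentially the same route as the paper's proof: the kernel representation of Lemma~\ref{Lemma-R-2} writing $\mathfrak{T}_\epsilon(g)-C(\alpha/\beta,\nu)f$ as $\int_0^\infty\bigl(\mathfrak{B}_k^{(\beta,\epsilon s)}(f)-f\bigr)\mathcal{K}_{\alpha/\beta}(s)\,ds$, then Minkowski's integral inequality and dominated convergence via the strong continuity of the $\beta$-semigroup from Proposition~\ref{Proposition-main}(ix). You are in fact more explicit than the paper on two points it glosses over, namely the verification of the constant identity $\int_0^\infty \mathcal{K}_{\alpha/\beta}(s)\,ds = C(\alpha/\beta,\nu)$ and the extension of the kernel representation from $\beta>\alpha$ to all $\beta>0$, but the underlying argument is the same.
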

\begin{proof}
In the light of Lemma \ref{Lemma-R-2}, we have 
\begin{align*}
    \mathfrak{T}_\epsilon\left(\mathcal{I}_k^\alpha (f)\right)(\xi)-C\left(\frac{\alpha}{\beta},\nu\right)f(\xi) = \int_0^\infty \left(
    \mathcal{B}_k^{(\beta, \epsilon s)}(f)(\xi) - f(\xi)
    \right)\mathcal{K}_{\alpha/\beta}(s) ds.
\end{align*}Then Minkowski's integral inequality implies 
\begin{align*}
    \|  \mathfrak{T}_\epsilon(\mathcal{I}_k^\alpha(f))-C\left(\frac{\alpha}{\beta},\nu\right) f\|_{L_k^p(\mathbb{R}^n)} \leq \int_0^\infty 
    \|
    \mathcal{B}_k^{(\beta, \epsilon s)}(f) - f
    \|_{L_k^p(\mathbb{R}^n)} |\mathcal{K}_{\alpha/\beta}(s)| ds.
\end{align*}
Using the Dominated convergence theorem and Proposition \ref{Proposition-main} $(ix)$, we obtain
\begin{align*}
    \|  \mathfrak{T}_\epsilon(\mathcal{I}_k^\alpha (f))-C\left(\frac{\alpha}{\beta},\nu\right)f\|_{L_k^p(\mathbb{R}^n)} \longrightarrow 0 \quad \text{ as } \epsilon \longrightarrow 0^+. 
\end{align*} For $f\in \mathcal{C}_0(\mathbb{R}^n)\cap L_k^p(\mathbb{R}^n)$, the uniform convergence is an easy consequence of Proposition \ref{Proposition-main} $(ix)$.   
\end{proof}
\begin{remark}
    The inversion of the Dunkl-Riesz potential can be derived under minimal conditions on $\nu$ by assuming that the parameter $\beta$ takes a value greater than $\alpha$. Consequently, $\left\lfloor \frac{\alpha}{\beta} \right\rfloor = 0$, ensuring that the prior conditions align with those imposed on $\nu$ itself.
\end{remark}

\subsection{Dunkl-Riesz potential space:} We define the Dunkl-Riesz potential space for $0<\alpha< n+2\gamma$, and $1<p < \frac{n+2\gamma}{\alpha}$  as 
 \begin{align*}
     \mathcal{I}_k^\alpha(L_k^p(\mathbb{R}^n))=\{g : g = \mathcal{I}_k^\alpha(f), \text{ for } f\in L_k^p(\mathbb{R}^n) \} .
 \end{align*} 
Even though the space $\mathcal{I}_k^\alpha\left(L_k^p(\mathbb{R}^n)\right) \subset L_k^q(\mathbb{R}^n),$ where $\frac{1}{q}=\frac{1}{p}-\frac{\alpha}{n+2\gamma}$ \cite{Hassani}, we impose a new structure on  $\mathcal{I}_k^\alpha(L_k^p(\mathbb{R}^n))$  by defining a norm as $\|g\|_{\mathcal{I}_k^\alpha(L_k^p(\mathbb{R}^n))}= \|f\|_{L_k^p(\mathbb{R}^n)},$ which makes  $\mathcal{I}_k^\alpha(L_k^p(\mathbb{R}^n))$ a complete space.\\

\noindent We shall first prove some preliminary lemmas for characterizing the Dunkl-Riesz potential space.

\begin{lemma} \label{Lemma-C-1}
    Let $0< \alpha <n+2\gamma$, $\beta >\alpha$ and $f\in \mathcal{S}(\mathbb{R}^n)$. Then $\mathfrak{T}_{\epsilon}(f)$ admits the convolution representation
    \begin{align*}
      \mathfrak{T}_{\epsilon}(f) (\xi)   = f \underset{k}{\ast} \mathcal{W}_{\epsilon}(\xi),
    \end{align*} where $\mathcal{W}_{\epsilon}(\xi)= \int_{\epsilon}^\infty \int_0^\infty \mathcal{W}_k^{(\beta, st)}(\xi) d\nu(s)t^{-(\frac{\alpha}{\beta}+1)}dt$.
\end{lemma}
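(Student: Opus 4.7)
The plan is to unpack the definitions of $\mathfrak{T}_\epsilon$, $\mathbf{W}_k^{(\beta,t)}$, and the $\beta$-semigroup in succession, and then apply Fubini's theorem to swap the order of integration over $t$, $\nu$, and the $y$-variable that arises from the Dunkl convolution. Concretely, I would start from
\begin{align*}
\mathfrak{T}_\epsilon(f)(\xi) = \int_\epsilon^\infty \!\int_0^\infty \mathfrak{B}_k^{(\beta,st)}(f)(\xi)\,d\nu(s)\, t^{-\alpha/\beta-1}\,dt
\end{align*}
and replace $\mathfrak{B}_k^{(\beta,st)}(f)(\xi)$ by its convolution representation \eqref{beta-convolution}, yielding a triple integral of $\mathcal{W}_k^{(\beta,st)}(y)\,\tau_\xi f(-y)\,w_k(y)$ against $dy\,d\nu(s)\,t^{-\alpha/\beta-1}dt$. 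Formal interchange then produces exactly the claimed kernel
\begin{align*}
\mathcal{W}_\epsilon(y) = \int_\epsilon^\infty\!\int_0^\infty \mathcal{W}_k^{(\beta,st)}(y)\,d\nu(s)\,t^{-\alpha/\beta-1}\,dt,
\end{align*}
so the essence is to legitimize the interchange.

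The main obstacle is therefore justifying Fubini, i.e.\ the absolute integrability
\begin{align*}
\int_\epsilon^\infty\!\int_0^\infty\!\int_{\mathbb{R}^n} \bigl|\mathcal{W}_k^{(\beta,st)}(y)\bigr|\,\bigl|\tau_\xi f(-y)\bigr|\,w_k(y)\,dy\,d|\nu|(s)\,t^{-\alpha/\beta-1}\,dt < \infty.
\end{align*}
For this I would exploit the scaling identity in Proposition~\ref{Proposition-main}~$(i)$ together with the homogeneity of $w_k$: the change of variable $z=(st)^{-1/\beta}y$ shows that $\|\mathcal{W}_k^{(\beta,st)}\|_{L_k^1(\mathbb{R}^n)} = \|\mathcal{W}_k^{(\beta,1)}\|_{L_k^1(\mathbb{R}^n)}$, a constant $C_\beta$ independent of $s,t$. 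Since $f\in\mathcal{S}(\mathbb{R}^n)$ and the Schwartz class is invariant under $\tau_\xi$ (Proposition~\ref{translation property}~(i)), $\tau_\xi f$ is bounded, so the inner $y$-integral is dominated by $C_\beta\|\tau_\xi f\|_\infty$. The finiteness of the total variation of $\nu$ then controls the $s$-integral, and finally $\int_\epsilon^\infty t^{-\alpha/\beta-1}dt < \infty$ (using $\alpha/\beta>0$ and that $\epsilon>0$ bounds us away from the origin) handles the $t$-integral.

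Once Fubini is justified, pulling the $y$-integration outermost yields
\begin{align*}
\mathfrak{T}_\epsilon(f)(\xi) = \int_{\mathbb{R}^n} \mathcal{W}_\epsilon(y)\,\tau_\xi f(-y)\,w_k(y)\,dy = f\underset{k}{\ast}\mathcal{W}_\epsilon(\xi),
\end{align*}
recognizing the Dunkl convolution from its definition. The hypothesis $\beta>\alpha$ (forcing $\lfloor\alpha/\beta\rfloor=0$) is not essential to the interchange itself but aligns this lemma with the regime of Lemma~\ref{Lemma-R-2}, where the kernel $\mathcal{W}_\epsilon$ will subsequently be analyzed; I would remark on this consistency at the end rather than invoke it in the integrability estimate.
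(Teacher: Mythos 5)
Your proof is correct, but it takes a genuinely different route from the paper's. You work entirely on the space side: substitute the convolution form \eqref{beta-convolution} of $\mathfrak{B}_k^{(\beta,st)}$ into \eqref{truncated function-1}, verify absolute convergence of the resulting triple integral (via the scaling identity of Proposition~\ref{Proposition-main}~$(i)$ and the homogeneity of $w_k$, which make $\|\mathcal{W}_k^{(\beta,st)}\|_{L_k^1(\mathbb{R}^n)}$ independent of $s,t$, together with the boundedness of $\tau_\xi f$ for Schwartz $f$, the finiteness of $\|\nu\|$, and $\int_\epsilon^\infty t^{-\alpha/\beta-1}\,dt<\infty$), and then apply Fubini. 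The paper instead passes through the Dunkl transform: it shows $\mathcal{D}_k(\mathfrak{T}_\epsilon f)(\xi)=\mathcal{D}_k(f)(\xi)\,T_\epsilon(\xi)$ with $T_\epsilon(\xi)=\int_\epsilon^\infty\int_0^\infty e^{-st\|\xi\|^\beta}\,d\nu(s)\,t^{-\alpha/\beta-1}\,dt$, proves $T_\epsilon\in L_k^1(\mathbb{R}^n)\cap L_k^\infty(\mathbb{R}^n)$ by an explicit radial estimate, identifies $\mathcal{D}_k^{-1}(T_\epsilon)=\mathcal{W}_\epsilon$, and concludes by the convolution theorem. Your argument is the more elementary of the two and, if you drop the factor $|\tau_\xi f(-y)|$ from your integrability estimate, it directly yields $\mathcal{W}_\epsilon\in L_k^1(\mathbb{R}^n)$ with $\|\mathcal{W}_\epsilon\|_{L_k^1(\mathbb{R}^n)}\leq \tfrac{\beta}{\alpha}\epsilon^{-\alpha/\beta}\|\nu\|\,C(\beta)$; the paper's route has the advantage of producing as byproducts the multiplier identity $\mathcal{D}_k(\mathcal{W}_\epsilon)=T_\epsilon$ and the membership $\mathcal{W}_\epsilon\in L_k^2(\mathbb{R}^n)$, both of which are reused in the proof of Lemma~\ref{Lemma-c-2}. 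Two minor points: your final display is literally the definition of $\mathcal{W}_\epsilon\underset{k}{\ast}f(\xi)$, so you should cite the commutativity in Proposition~\ref{D.C-Property}~$(i)$ to rewrite it as $f\underset{k}{\ast}\mathcal{W}_\epsilon(\xi)$; and your observation that the hypothesis $\beta>\alpha$ is not needed for the interchange is consistent with the paper, whose proof likewise never invokes it.
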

\begin{proof} We begin the proof by considering the Dunkl transform of $\mathfrak{T}_{\epsilon}(f)$
    \begin{align*}
        \mathcal{D}_k(\mathfrak{T}_{\epsilon}(f))(\xi)= c_k \int_{\mathbb{R}^n} \mathfrak{T}_{\epsilon}(f)(y)E_k(-i\xi,y)w_k(y)dy.
    \end{align*}
 Taking into account \eqref{truncated function-1}, \eqref{wavlet}, and \eqref{beta-convolution}, we obtain
 \begin{align*}
     \mathcal{D}_k(\mathfrak{T}_{\epsilon}(f))(\xi) &= c_k  \int_{\mathbb{R}^n}  \int_{\epsilon}^\infty \int_0^\infty  
    \mathcal{W}_k^{(\beta,st)}\underset{k}{\ast}
    f(y)E_k(-i\xi,y)d\nu(s)t^{-(\frac{\alpha}{\beta}+1)}dt w_k(y)dy. 
 \end{align*}
 Fubini's theorem implies
 \begin{align} \label{Truncation-1}
     \mathcal{D}_k(\mathfrak{T}_{\epsilon}f)(\xi)=\mathcal{D}_k(f)(\xi) \int_{\epsilon}^\infty \int_0^\infty e^{-st\|\xi\|^\beta}  d\nu(s)t^{-(\frac{\alpha}{\beta}+1)}dt.
 \end{align} We denote $T_\epsilon(\xi)=  \int_{\epsilon}^\infty \int_0^\infty e^{-st\|\xi\|^\beta}  d\nu(s)t^{-(\frac{\alpha}{\beta}+1)}dt,$ and observe that 
 $T_\epsilon$ is a radial function with
 $\|T_\epsilon\|_{L_k^{\infty}(\mathbb{R}^n)} \leq \frac{\beta}{\alpha} \|\nu\|  \epsilon^{-\frac{\alpha}{\beta}}$. Now, it is enough to prove that $T_\epsilon \in L_k^1(\mathbb{R}^n)$. We consider the function 
 \begin{align*}
     \int_{\mathbb{R}^n} \int_{\epsilon}^\infty& \int_0^\infty e^{-st\|\xi\|^\beta}  d|\nu|(s)t^{-(\frac{\alpha}{\beta}+1)}dt w_k(\xi)d\xi \\ &= d_k\int_0^\infty \int_{\epsilon}^\infty \int_0^\infty e^{-st r^\beta}  d|\nu|(s)t^{-(\frac{\alpha}{\beta}+1)}dt r^{n+2\gamma-1}dr\\
     & = d_k \int_{\epsilon}^\infty \int_0^\infty \int_0^\infty 
     e^{-st r^\beta} r^{n+2\gamma-1}dr d|\nu|(s)t^{-(\frac{\alpha}{\beta}+1)}dt, 
 \end{align*} where we have used Tonelli's theorem for interchanging the integral. We can find large enough $N_1, N_2\in \mathbb{N}$ such that  $e^{-st r^\beta} r^{n+2\gamma-1} \leq r^{-N_2}$ fo all $r\geq N_1$. 
 \begin{align*}
     \int_{\epsilon}^\infty \int_0^\infty& \int_0^\infty 
     e^{-st r^\beta} r^{n+2\gamma-1}dr d|\nu|(s)t^{-(\frac{\alpha}{\beta}+1)}dt \\
     & = \int_{\epsilon}^\infty \int_0^\infty \int_0^{N_1} + \int_{\epsilon}^\infty \int_0^\infty \int_{N_1}^\infty
     e^{-st r^\beta} r^{n+2\gamma-1}dr d|\nu|(s)t^{-(\frac{\alpha}{\beta}+1)}dt\\
     & \leq   \frac{\beta \epsilon^{-\frac{\alpha}{\beta}} }{\alpha}  \|\nu\| \left(   \frac{N_1^{n+2\gamma}}{n+2\gamma} + \frac{N_1^{-N_2+1}}{1-N_2}
     \right),
 \end{align*} which gives the integrability of $T_\epsilon$. Moreover,
 \begin{align*}
      \mathcal{D}_k^{-1}(T_\epsilon)(\xi) &= c_k \int_{\mathbb{R}^n} T_\epsilon(y)E_k(i\xi,y) w_k(y)dy \\
      &= \int_{\epsilon}^\infty \int_0^\infty \left( c_k \int_{\mathbb{R}^n} e^{-st\|y\|^\beta}  E_k(i\xi,y) w_k(y)dy\right) d\nu(s)t^{-(\frac{\alpha}{\beta}+1)}dt\\
      &= \int_{\epsilon}^\infty \int_0^\infty \mathcal{W}_k^{(\beta, st)}(\xi) d\nu(s)t^{-(\frac{\alpha}{\beta}+1)}dt = \mathcal{W}_{\epsilon}(\xi).
 \end{align*} Since $\mathcal{D}_k$ is an isomorphism on $L_k^2(\mathbb{R}^n)$ we conclude that $\mathcal{W}_{\epsilon} \in L_k^2(\mathbb{R}^n)$ and \eqref{Truncation-1} becomes  $ \mathcal{D}_k(\mathfrak{T}_{\epsilon}(f))(\xi)=\mathcal{D}_k(f)(\xi) \mathcal{D}_k(\mathcal{W}_{\epsilon})(\xi)$. Thus, we have $\mathfrak{T}_{\epsilon}(f)= f \underset{k}{\ast}\mathcal{W}_\epsilon$.  
\end{proof}
\begin{lemma} \label{Lemma-c-2}
    Let $f \in \mathcal{S}(\mathbb{R}^n)$ and $\epsilon >0$. Then $\langle \mathfrak{T}_{\epsilon}(f), h \rangle = \langle f, \mathfrak{T}_{\epsilon}(h) \rangle$ for every $h\in \mathcal{S}(\mathbb{R}^n)$, where $ \langle \mathfrak{T}_{\epsilon}(f), h \rangle= \int_{\mathbb{R}^n} \mathfrak{T}_{\epsilon}(f)(\xi)h(\xi)w_k(\xi)d\xi$, in the weak sense.
\end{lemma}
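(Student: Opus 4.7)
The plan is to reduce the identity to a symmetry on the Fourier side, exploiting the radiality of the kernel produced by Lemma \ref{Lemma-C-1}. First, I would invoke that lemma to write $\mathfrak{T}_{\epsilon}(f) = f \underset{k}{\ast} \mathcal{W}_{\epsilon}$ and similarly $\mathfrak{T}_{\epsilon}(h) = h \underset{k}{\ast} \mathcal{W}_{\epsilon}$, where $\mathcal{W}_{\epsilon} = \mathcal{D}_k^{-1}(T_\epsilon)$ with
\[
T_\epsilon(\xi) = \int_\epsilon^\infty \int_0^\infty e^{-st\|\xi\|^\beta}\,d\nu(s)\,t^{-\alpha/\beta - 1}\,dt.
\]
The crucial observation is that $T_\epsilon$ depends on $\xi$ only through $\|\xi\|$, hence $T_\epsilon(-\xi) = T_\epsilon(\xi)$.

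Next, from the Dunkl inversion identity $\mathcal{D}_k^{-1}(F)(x) = \mathcal{D}_k(F)(-x)$ combined with Fubini, one obtains the unconjugated Parseval identity
\[
\int_{\mathbb{R}^n} F(x)\,G(x)\,w_k(x)\,dx \;=\; \int_{\mathbb{R}^n} \mathcal{D}_k(F)(\xi)\,\mathcal{D}_k(G)(-\xi)\,w_k(\xi)\,d\xi.
\]
Applied together with the convolution multiplier rule in Proposition \ref{D.C-Property}(ii), this yields
\[
\langle \mathfrak{T}_{\epsilon}(f),h \rangle \;=\; \int_{\mathbb{R}^n} \mathcal{D}_k(f)(\xi)\,T_\epsilon(\xi)\,\mathcal{D}_k(h)(-\xi)\,w_k(\xi)\,d\xi.
\]
Using $T_\epsilon(\xi) = T_\epsilon(-\xi)$, I regroup the integrand as $\mathcal{D}_k(f)(\xi)\bigl[T_\epsilon(-\xi)\,\mathcal{D}_k(h)(-\xi)\bigr]$; the bracket equals $\mathcal{D}_k(\mathfrak{T}_{\epsilon}(h))(-\xi)$ by Proposition \ref{D.C-Property}(ii) again, and a second application of the unconjugated Parseval identity in reverse recovers $\langle f, \mathfrak{T}_{\epsilon}(h) \rangle$.

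The only delicate point is justifying the interchanges rigorously. From the proof of Lemma \ref{Lemma-C-1} one has $T_\epsilon \in L_k^1(\mathbb{R}^n) \cap L_k^\infty(\mathbb{R}^n)$ and $\mathcal{W}_\epsilon \in L_k^2(\mathbb{R}^n) \cap \mathcal{C}_0(\mathbb{R}^n)$; the invariance of $\mathcal{S}(\mathbb{R}^n)$ under $\mathcal{D}_k$ then makes $\mathcal{D}_k(f)\,T_\epsilon\,\mathcal{D}_k(h)(-\cdot)$ rapidly decreasing, so all iterated integrals converge absolutely and Fubini is applicable throughout. An alternative route, should one prefer to avoid the Fourier side, would be to work directly from the convolution definition and use the reciprocity $\tau_x \mathcal{W}_\epsilon(-y) = \tau_y \mathcal{W}_\epsilon(-x)$ for radial $\mathcal{W}_\epsilon$ (which follows from Proposition \ref{translation property}(ii) together with the evenness of $\mathcal{W}_\epsilon$), but this amounts to the same symmetry expressed in physical space.
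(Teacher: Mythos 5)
Your proposal is correct and follows essentially the same route as the paper: both pass to the Dunkl-transform side via the convolution representation of Lemma \ref{Lemma-C-1}, rewrite each pairing as an integral of $\mathcal{D}_k(f)\,\mathcal{D}_k(\mathcal{W}_\epsilon)\,\mathcal{D}_k(h)(-\cdot)$ against $w_k$, and conclude from the evenness of the radial multiplier $T_\epsilon=\mathcal{D}_k(\mathcal{W}_\epsilon)$ (the paper phrases this as the substitution $y=-x$). Your version merely makes explicit the unconjugated Parseval identity and the integrability facts that the paper leaves implicit.
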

\begin{proof}
 Considering $\langle \mathfrak{T}_{\epsilon}(f), h \rangle$ and, using Lemma \ref{Lemma-C-1} and Proposition \ref{D.C-Property}, we achieve
  \begin{align*}
       \int_{\mathbb{R}^n} \mathfrak{T}_{\epsilon}(f)(\xi)h(\xi)w_k(\xi)d\xi &= \int_{\mathbb{R}^n}  f \underset{k}{\ast}\mathcal{W}_\epsilon(\xi)h(\xi)w_k(\xi)d\xi\\
       &= \int_{\mathbb{R}^n} h(\xi) \int_{\mathbb{R}^n} \mathcal{D}_k(f)(x)\mathcal{D}_k(\mathcal{W}_{\epsilon})(x) E_k(i\xi,x)  w_k(x)dx w_k(\xi)d\xi\\
       &= \int_{\mathbb{R}^n}  \mathcal{D}_k(f)(x)\mathcal{D}_k(\mathcal{W}_{\epsilon})(x) \mathcal{D}_k(h)(-x)w_k(x)dx.
  \end{align*} Similar arguments will imply 
  \begin{align*}
      \int_{\mathbb{R}^n} f(\xi) \mathfrak{T}_{\epsilon}(h)(\xi)w_k(\xi)d\xi &=  \int_{\mathbb{R}^n}  \mathcal{D}_k(f)(-x)\mathcal{D}_k(\mathcal{W}_{\epsilon})(x) \mathcal{D}_k(h)(x)w_k(x)dx.
  \end{align*}Thus, the required identity is obtained by substituting $y=-x$. 
\end{proof}
\noindent We recall the Lizorkin space in the Dunkl setting, which is crucial in characterizing Dunkl-Riesz potential spaces. For more details regarding Lizorkin space in the Dunkl setting, we refer \cite{Gorbachev-2021}.
\begin{definition}
    The Lizorkin space associated with the Dunkl transform is defined by 
    \begin{align*}
        \Phi_k = \{ f\in \mathcal{S}(\mathbb{R}^n); \int_{\mathbb{R}^n} \xi^m f(\xi)w_k(\xi)d\xi =0, \quad \forall m \in \mathbb{N}^n 
        \}. 
    \end{align*}
\end{definition} 
In \cite{Gorbachev-2021}, it is proved that 
$\mathcal{I}_k^\alpha (\Phi_k)=\Phi_k$, consequently one can define the Dunkl-Riesz potential for $\Phi_k^\prime$ with the relation  \begin{align*}
    \langle \mathcal{I}_k^\alpha(f), \phi \rangle = \langle f, \mathcal{I}_k^\alpha(\phi) \rangle , \quad \text{ for } \phi \in \Phi_k. 
\end{align*}
The forthcoming theorem characterizes the Dunkl-Riesz potential space.
\begin{theorem}\label{Char.R.P.Space}
    Let $0<\alpha<n+2\gamma, \quad 1<p<\frac{n+2\gamma}{\alpha}$, $\beta >\alpha $ and $\nu$ be the wavelet measure with $C\left(\frac{\alpha}{\beta},\nu\right)\neq 0.$
    Then $g \in \mathcal{I}_k^\alpha(L_k^p(\mathbb{R}^n))$ if and only if $g \in L_k^q(\mathbb{R}^n)$ for  $q= \frac{p(n+2\gamma)}{n+2\gamma-p\alpha}$ and 
    $\underset{\epsilon>0}{\sup} \| \mathfrak{T}_\epsilon (g)\|_{L_k^p(\mathbb{R}^n)} < \infty.$
\end{theorem}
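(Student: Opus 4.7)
The statement splits into a necessity and a sufficiency direction, and I treat them in that order.

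For necessity, assume $g=\mathcal{I}_k^\alpha(f)$ with $f\in L_k^p(\mathbb{R}^n)$. That $g\in L_k^q(\mathbb{R}^n)$ is the Dunkl Hardy--Littlewood--Sobolev estimate already recorded just before the theorem. For the supremum bound I insert $g=\mathcal{I}_k^\alpha(f)$ into Lemma~\ref{Lemma-R-2} to obtain
\[
\mathfrak{T}_\epsilon(g)(\xi)=\int_0^\infty \mathfrak{B}_k^{(\beta,\epsilon s)}(f)(\xi)\,\mathcal{K}_{\alpha/\beta}(s)\,ds,
\]
and then apply Minkowski's integral inequality together with the uniform bound of Proposition~\ref{Proposition-main}(v), which reduces matters to proving $\int_0^\infty|\mathcal{K}_{\alpha/\beta}(s)|\,ds<\infty$. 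Since $\beta>\alpha$ forces $\alpha/\beta<1$, splitting the integral into $\{s\le 1\}$ and $\{s\ge 1\}$ and exploiting the cancellation $\int_0^\infty d\nu=0$ together with the first--moment bound $\int_0^\infty s\,d|\nu|(s)<\infty$ built into the wavelet--measure definition makes this integral finite, uniformly in $\epsilon$.

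For sufficiency, assume $g\in L_k^q(\mathbb{R}^n)$ and $M:=\sup_{\epsilon>0}\|\mathfrak{T}_\epsilon(g)\|_{L_k^p}<\infty$. Because $1<p<\infty$, the space $L_k^p(\mathbb{R}^n)$ is reflexive, so Banach--Alaoglu produces a sequence $\epsilon_n\to 0^+$ and an element $F\in L_k^p(\mathbb{R}^n)$ with $\|F\|_{L_k^p}\le M$ such that $\mathfrak{T}_{\epsilon_n}(g)\rightharpoonup F$ weakly in $L_k^p(\mathbb{R}^n)$. To identify $F$, I test against an arbitrary $\phi\in\Phi_k$: using $\mathcal{I}_k^\alpha(\Phi_k)=\Phi_k$, I write $\phi=\mathcal{I}_k^\alpha(\psi)$ with $\psi\in\Phi_k$, invoke the self--adjointness of $\mathfrak{T}_\epsilon$ from Lemma~\ref{Lemma-c-2} to rewrite $\langle \mathfrak{T}_{\epsilon_n}(g),\phi\rangle=\langle g,\mathfrak{T}_{\epsilon_n}(\phi)\rangle$, and apply Theorem~\ref{Inversion-Riesz} with exponent $q'$ (which lies in $[1,(n+2\gamma)/\alpha)$ since $1/q'=1/p'+\alpha/(n+2\gamma)$) to the Schwartz function $\psi$. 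This yields $\mathfrak{T}_{\epsilon_n}(\phi)=\mathfrak{T}_{\epsilon_n}\bigl(\mathcal{I}_k^\alpha(\psi)\bigr)\to C(\alpha/\beta,\nu)\,\psi$ in $L_k^{q'}(\mathbb{R}^n)$, so H\"older's inequality forces $\langle g,\mathfrak{T}_{\epsilon_n}(\phi)\rangle\to C(\alpha/\beta,\nu)\langle g,\psi\rangle$, while weak convergence on the other side gives $\langle \mathfrak{T}_{\epsilon_n}(g),\phi\rangle\to\langle F,\phi\rangle$. Matching the two limits yields $\langle F,\phi\rangle=C(\alpha/\beta,\nu)\langle g,\psi\rangle$ for every $\phi\in\Phi_k$.

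Setting $\tilde f:=F/C(\alpha/\beta,\nu)$, which is legitimate because $C(\alpha/\beta,\nu)\ne 0$, the preceding identity reads $\langle \tilde f,\mathcal{I}_k^\alpha(\psi)\rangle=\langle g,\psi\rangle$ for every $\psi\in\Phi_k$; this is precisely the duality definition of $\mathcal{I}_k^\alpha$ on $\Phi_k'$ recalled just before the theorem, so $g=\mathcal{I}_k^\alpha(\tilde f)$ in $\Phi_k'$, and since $\tilde f\in L_k^p(\mathbb{R}^n)$ this places $g$ in $\mathcal{I}_k^\alpha(L_k^p(\mathbb{R}^n))$. The step I expect to be the main obstacle is calibrating the application of Theorem~\ref{Inversion-Riesz} so that the convergence $\mathfrak{T}_{\epsilon_n}(\phi)\to C(\alpha/\beta,\nu)\,\psi$ takes place in the H\"older-conjugate space $L_k^{q'}$ rather than in the ambient $L_k^p$; once that calibration is verified, the rest is a clean combination of weak compactness with the Lizorkin-space duality already established in the paper.
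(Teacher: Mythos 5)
Your argument follows the paper's proof essentially step for step: Banach--Alaoglu for a weak limit point, the self-adjointness of $\mathfrak{T}_\epsilon$ from Lemma~\ref{Lemma-c-2}, the representation of Lemma~\ref{Lemma-R-2}, the inversion Theorem~\ref{Inversion-Riesz} applied to a Schwartz test function, and the Lizorkin-space duality $\mathcal{I}_k^\alpha(\Phi_k)=\Phi_k$. Your exponent bookkeeping for the H\"older pairing --- convergence of $\mathfrak{T}_{\epsilon_n}(\phi)$ in $L_k^{q'}$ with $\tfrac{1}{q'}=\tfrac{1}{p'}+\tfrac{\alpha}{n+2\gamma}$, which is admissible since $q'<\tfrac{n+2\gamma}{\alpha}$ --- is in fact cleaner than the paper's, which at that point reuses the letter $q$ for the conjugate of $p$. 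Your treatment of the necessity direction (Lemma~\ref{Lemma-R-2} plus Minkowski, the uniform bound of Proposition~\ref{Proposition-main}(v), and the integrability of $\mathcal{K}_{\alpha/\beta}$) is a reasonable expansion of the paper's one-line appeal to boundedness.

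The only place you stop short is the very last step. The identity $\langle \tilde f,\mathcal{I}_k^\alpha(\psi)\rangle=\langle g,\psi\rangle$ for all $\psi\in\Phi_k$ gives $g=\mathcal{I}_k^\alpha(\tilde f)$ only as elements of $\Phi_k'$, i.e.\ modulo the annihilator of $\Phi_k$, which consists of polynomials. To conclude that $g$ and $\mathcal{I}_k^\alpha(\tilde f)$ agree as functions --- which is what membership in $\mathcal{I}_k^\alpha(L_k^p(\mathbb{R}^n))$ actually requires --- you must still observe that their difference is a polynomial lying in $L_k^q(\mathbb{R}^n)$ (by the hypothesis on $g$ together with the Hardy--Littlewood--Sobolev bound applied to $\tilde f$), and hence is identically zero. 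This is exactly how the paper closes the argument; with that one line added, your proof is complete.
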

\begin{proof}
    The forward implication follows from the boundedness of the Riesz potential \cite{Hassani}.  For the converse part, we use the techniques in \cite{Sezer}.
    \\ Consider $\underset{\epsilon>0}{\sup} \| \mathfrak{T}_\epsilon (g)\|_{L_k^p(\mathbb{R}^n)} < \infty$. According to the Banach-Alaoglu theorem we have $f\in L_k^p(\mathbb{R}^n)$ such that $\lim_{\epsilon_m \rightarrow 0}\langle  \mathfrak{T}_{\epsilon_m} (g) , \phi \rangle = \langle f, \phi \rangle$ for $\phi \in \Phi$. In view of Lemmas \ref{Lemma-R-2}, \ref{Lemma-C-1}, and \ref{Lemma-c-2} and Theorem \ref{Inversion-Riesz}, we have 
    \begin{align*}
        \langle \mathcal{I}_k^\alpha(f), \phi \rangle & = \langle f , \mathcal{I}_k^\alpha(\phi) \rangle = {C\left(\frac{\alpha}{\beta},\nu\right)}^{-1}\underset{\epsilon_m \rightarrow 0}{\lim} \langle \mathfrak{T}_{\epsilon_m}(g),  \mathcal{I}_k^\alpha(\phi) \rangle =  {C\left(\frac{\alpha}{\beta},\nu\right)}^{-1}\underset{\epsilon_m \rightarrow 0}{\lim} \langle g, \mathfrak{T}_{\epsilon_m}(
        \mathcal{I}_k^\alpha(\phi)) \rangle \\
        & = \underset{\epsilon_m \rightarrow 0}{\lim} 
       \Biggl \langle   g,  {C\left(\frac{\alpha}{\beta},\nu\right)}^{-1} \int_0^\infty \mathcal{B}_k^{(\beta, s\epsilon_m )}(\phi) \mathcal{K}_{\alpha/\beta}(s) ds
      \Biggl  \rangle .
    \end{align*}
We consider $   \left|  \Bigl \langle   g,  {C\left(\frac{\alpha}{\beta},\nu\right)}^{-1} \int_0^\infty \mathcal{B}_k^{(\beta, s\epsilon_m )}(\phi) \mathcal{K}_{\alpha/\beta}(s) ds
      \Bigl  \rangle -\langle g, \phi \rangle
      \right| $, and using H\"olders inequality and Minkowski's inequality, we deduce 
 \begin{align*}
   &\left|  \Biggl \langle   g,  {C\left(\frac{\alpha}{\beta},\nu\right)}^{-1} \int_0^\infty \mathcal{B}_k^{(\beta, s\epsilon_m )}(\phi) \mathcal{K}_{\alpha/\beta}(s) ds
      \Biggl  \rangle -\langle g, \phi \rangle
      \right|\\
      &\leq {C\left(\frac{\alpha}{\beta},\nu\right)}^{-1} \|g\|_{L_k^q(\mathbb{R}^n)} \int_0^\infty \| \mathcal{B}_k^{(\beta, s\epsilon_m)}(\phi)-\phi\|_{L_k^p(\mathbb{R}^n)} \left| \mathcal{K}_{\alpha/\beta}(s) 
      \right|ds,
 \end{align*}where $\frac{1}{p}+\frac{1}{q}=1$.  Dominated convergence theorem implies that $\langle \mathcal{I}_k^\alpha( f), \phi \rangle=\langle g, \phi \rangle$
for $\phi \in \Phi$. Thus, $\mathcal{I}_k^\alpha (f) - g \perp \Phi$, consequently $\mathcal{I}_k^\alpha(f)= g+ P$, for some polynomial $P(x)$. Since $\mathcal{I}_k^\alpha(f), g \in L_k^q(\mathbb{R}^n)$, we have $P(x)\in L_k^q(\mathbb{R}^n)$, thus $P=0$ and $g = \mathcal{I}_k^\alpha(f)$. 
\end{proof}
\section{Potentials associated with Dunkl-Laplacian } \label{S:4}
In this section, we introduce the bi-parametric potential in the Dunkl setting, which extends the Dunkl-Bessel and 
Dunkl-Flett potentials. This generalization enables a broader exploration of potential theoretic concepts in spaces with reflection symmetries. The key contributions of this section include deriving an inversion formula for the bi-parametric potential and establishing a characterization of the corresponding potential spaces.\\
We have observed that the $\beta$-semigroup provides a unified framework for the representation of the $k$-heat and $k$-Poisson semigroups. Using this observation, we can express the Dunkl-Bessel and Dunkl-Flett potentials in terms of the $\beta$-semigroup, as demonstrated below:
\begin{align*}
    \mathcal{J}_k^\alpha(f)(\xi) = \frac{1}{\Gamma(\frac{\alpha}{2})}\int_0^\infty t^{\frac{\alpha}{2}-1}e^{-t}\mathfrak{B}_k^{(2,t)}(f)(\xi)dt,\\
    \mathcal{F}_k^\alpha (f)(\xi) = \frac{1}{\Gamma(\alpha)}\int_0^\infty t^{{\alpha}-1}e^{-t}\mathfrak{B}_k^{(1,t)}(f)(\xi)dt.
\end{align*}
The interplay between the powers of $t$ and the values of $\beta$ within the semigroup $\mathfrak{B}_k^{(\beta,t)}$, motivates us to introduce a more general class of potential operators in the Dunkl setting, termed as bi-parametric potential, analogous to classical setting \cite{Aliev}. We defined the bi-parametric potential as
\begin{align} \label{bi-potential}
\mathfrak{S}_k^{(\alpha, \beta)} (f)(\xi) = \frac{1}{\Gamma(\alpha/\beta)} \int_0^\infty t^{\frac{\alpha}{\beta}-1}e^{-t} \mathfrak{B}_k^{(\beta,t)}(f)(\xi)dt,
\end{align} where $\alpha, \beta >0$ and $f\in \mathcal{S}(\mathbb{R}^n)$. This formulation effectively unifies the definition of potential operators by combining the flexibility of the $\beta$-semigroup with a weight factor of the form $t^{\frac{\alpha}{\beta}-1}e^{-t}$. The boundedness of the bi-parametric potential operator can be established by using the boundedness of the Dunkl convolution product and Minkowski's integral inequality. Moreover we have 
\begin{align*} 
    \|\mathfrak{S}_k^{(\alpha, \beta)} (f)\|_{L_k^p(\mathbb{R}^n)} \leq \| \mathcal{W}_k^{(\beta,1)}\|_{L_k^1(\mathbb{R}^n)}\|f\|_{L_k^P(\mathbb{R}^n)}\quad \text{  for $1\leq p \leq \infty$.}
\end{align*}
  The boundedness property provides a foundation for further exploration of its properties. Next, we examine the action of the Dunkl transform on the bi-parametric potential $\mathfrak{S}_k^{(\alpha, \beta)}$ as
\begin{align*}
   \mathcal{D}_k\left(\mathfrak{S}_k^{(\alpha,  \beta)} (f)\right)(\xi)& = c_k \int_{\mathbb{R}^n} \mathfrak{S}_k^{(\alpha,  \beta)} (f)(x) E_k(-i\xi,x) w_k(x)dx \\
    &= \frac{c_k}{\Gamma(\alpha/\beta)} \int_0^\infty \int_{\mathbb{R}^n} \mathfrak{B}_k^{(\beta,t)}(f)(x) E_k(-i\xi,x) w_k(x)dx\, t^{\frac{\alpha}{\beta}-1}e^{-t}dt\\
    &= \frac{\mathcal{D}_k(f)(\xi)}{\Gamma(\alpha/\beta)} \int_0^\infty e^{-t(1+\|\xi\|^\beta)} t^{\frac{\alpha}{\beta}-1} dt.
\end{align*} Using the substitution $u=t(1+\|\xi\|^\beta)$, we obtain 
\begin{align*}
    \mathcal{D}_k\left(\mathfrak{S}_k^{(\alpha, \beta)} (f)\right)(\xi)& =  \frac{\mathcal{D}_k(f)(\xi)}{(1+\|\xi\|^\beta)^{\alpha/\beta}}.
\end{align*}
On the other hand, for every $\alpha, \beta>0$ and $f \in \mathcal{S}(\mathbb{R}^n)$,  the following holds:
\begin{align*}
    \mathcal{D}_k \left( \left(I+\left({-\Delta_k}\right)^{\frac{\beta}{2}} \right)^{-\alpha/\beta}(f)\right)(\xi)= \frac{\mathcal{D}_k(f)(\xi)}{(1+\|\xi\|^\beta)^{\alpha/\beta}}.
\end{align*}
This observation leads to the conclusion that the operator$\left(I+\left({-\Delta_k}\right)^{\frac{\beta}{2}}\right)^{-\alpha/\beta}$ can be expressed in the integral form provided in \eqref{bi-potential}. 
In addition to the integral representation and the closed-form expression in terms of powers of the Dunkl Laplacian, the bi-parametric potential can also be characterized as a convolution product involving the radial kernel $\mathcal{W}_k^{(\beta,t)}$ as 
\begin{align*}
    \mathfrak{S}_k^{(\alpha, \beta)}(f)(\xi) = f\underset{k}{\ast} \mathcal{S}_k^{(\alpha, \beta)}(\xi), \text{  where  \quad }  \mathcal{S}_k^{(\alpha, \beta)}(y) = \frac{1}{\Gamma(\alpha/\beta)}\int_0^\infty t^{\frac{\alpha}{\beta}-1}e^{-t}\mathcal{W}_k^{(\beta,t)}(y)dt.
\end{align*}
To provide more clarity, we compute the Dunkl transform of $\mathcal{S}_k^{(\alpha, \beta)}$
\begin{align*}
    \mathcal{D}_k\left( \mathcal{S}_k^{(\alpha, \beta)} \right)(\xi) &= c_k \int_{\mathbb{R}^n} \mathcal{S}_k^{(\alpha, \beta)}(y) E_k(-i\xi,y)w_k(y)dy \\
    & = \int_0^\infty t^{\frac{\alpha}{\beta}-1}e^{-t}\, c_k \int_{\mathbb{R}^n} \mathcal{W}_k^{(\beta,t)}(y) E_k(-i\xi,y)w_k(y)dy\, dt
\end{align*} 
Using the property $\mathcal{D}_k\left( \mathcal{W}_k^{(\beta,t)} \right)(x)=e^{-t\|x\|^\beta}$ and the substitution $u=t(1+\|\xi\|^\beta),$ we deduce $\mathcal{D}_k\left( \mathcal{S}_k^{(\alpha, \beta)} \right)(\xi) = (1+\|\xi\|^\beta)^{-\alpha/\beta}$.  \\
Now, we turn our attention to the inversion of the bi-parametric potential using the wavelet method, following the classical setting in \cite{Sezer}. We refer to \cite{Ben-Said-2022, Ben, Kallel} for the inversion of the Dunkl Bessel and Flett potentials using the wavelet-like transform associated with the $k$-heat and $k$-Poisson semigroups, respectively. Here, we extend the same techniques by replacing the $k$-heat and $k$-Poisson semigroup with $\mathfrak{B}_k^{(\beta,t)}$, which is suitable for the bi-parametric potential. First, we define the wavelet-like transform in this setting, referencing its formulation in \eqref{wavlet} and following the same approach to derive the inversion. One can also find the classical framework, as detailed in \cite{Aliev}. However, appropriate modifications have been made to incorporate similar arguments to the Dunkl setting.

\begin{definition} Let $f\in L_k^p(\mathbb{R}^n)$. The wavelet-like transform associated with $\mathfrak{B}_k^{(\beta,t)}$ is given by 
\begin{align} \label{Wavelet-2}
    \mathcal{V}_{k}^{(\beta,t)}(f)(\xi)= \int_0^\infty e^{-st} \mathfrak{B}_k^{(\beta,st)}(f)(\xi) d\nu(s), \text{   for  } x\in \mathbb{R}^n \text{  and  } t>0.
\end{align}
\end{definition}
For every $t>0$, the wavelet-like transform $\mathcal{V}_k^{(\beta,t)}$ is a bounded operator on $L_k^p(\mathbb{R}^n)$ to itself for $1\leq p \leq\infty$. Moreover, it satisfies the norm inequality:
$$\| \mathcal{V}_k^{(\beta,t)}(f)\|_{L_k^p(\mathbb{R}^n)} \leq \|\mathcal{W}_k^{(\beta,1)}\|_ {L_k^1(\mathbb{R}^n)}\|f\|_{L_k^p(\mathbb{R}^n)}\|\nu\|.$$
\begin{lemma} \label{Lemma-P-1}
     Let $\alpha, \beta>0$,  $f\in L_k^p(\mathbb{R}^n)$ with $1\leq p \leq \infty$ and $g=\mathfrak{S}_k^{(\alpha, \beta)}(f)$. Then for a wavelet measure $\nu$, $\mathcal{V}_k^{(\beta, t)}$ admits the representation
     \begin{align*}
         \mathcal{V}_k^{(\beta, t)}(g)(\xi)= \frac{1}{\Gamma(\alpha / \beta)} \int_0^\infty \int_{0}^\infty (r-st)_+^{\frac{\alpha}{\beta}-1} e^{-r}\mathfrak{B}_k^{(\beta, r)}(f)(\xi) dr d\nu(s),
    \end{align*}  
 where $ (r-st)_+$   is given in Lemma \ref{Lemma-R-1}.
\end{lemma}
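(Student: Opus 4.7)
The plan is to mirror the strategy used in Lemma \ref{Lemma-R-1} for the Riesz potential, with the key adjustment that the bi-parametric potential carries an extra exponential weight $e^{-t}$ inside its integral representation, and the wavelet-like transform $\mathcal{V}_k^{(\beta,t)}$ compensates for this by its own $e^{-st}$ factor. The crucial observation is that these two exponentials cancel precisely when one performs the change of variables, reproducing the desired Riemann--Liouville-type kernel on the right-hand side.

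First, I would apply the definition \eqref{Wavelet-2} to $g=\mathfrak{S}_k^{(\alpha,\beta)}(f)$ to obtain
\begin{align*}
\mathcal{V}_k^{(\beta,t)}(g)(\xi)=\int_0^\infty e^{-st}\,\mathfrak{B}_k^{(\beta,st)}\bigl(\mathfrak{S}_k^{(\alpha,\beta)}(f)\bigr)(\xi)\,d\nu(s).
\end{align*}
Next, since $\mathfrak{S}_k^{(\alpha,\beta)}$ is defined in \eqref{bi-potential} as an integral against $\mathfrak{B}_k^{(\beta,\tau)}(f)$, I would invoke the semigroup property \eqref{beta-additivity} to write
\begin{align*}
\mathfrak{B}_k^{(\beta,st)}\bigl(\mathfrak{S}_k^{(\alpha,\beta)}(f)\bigr)(\xi)=\frac{1}{\Gamma(\alpha/\beta)}\int_0^\infty \tau^{\frac{\alpha}{\beta}-1}e^{-\tau}\,\mathfrak{B}_k^{(\beta,\tau+st)}(f)(\xi)\,d\tau,
\end{align*}
where the interchange of $\mathfrak{B}_k^{(\beta,st)}$ with the integral representation of $\mathfrak{S}_k^{(\alpha,\beta)}$ is justified by Fubini's theorem, using the boundedness estimate \eqref{prop-main-5-0} together with the integrability of $\tau^{\frac{\alpha}{\beta}-1}e^{-\tau}$ on $(0,\infty)$.

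Then I would perform the substitution $r=\tau+st$, which transforms the inner integral into
\begin{align*}
\frac{e^{st}}{\Gamma(\alpha/\beta)}\int_{st}^\infty (r-st)^{\frac{\alpha}{\beta}-1}e^{-r}\,\mathfrak{B}_k^{(\beta,r)}(f)(\xi)\,dr=\frac{e^{st}}{\Gamma(\alpha/\beta)}\int_0^\infty (r-st)_+^{\frac{\alpha}{\beta}-1}e^{-r}\,\mathfrak{B}_k^{(\beta,r)}(f)(\xi)\,dr.
\end{align*}
The factor $e^{st}$ cancels exactly with the $e^{-st}$ from $\mathcal{V}_k^{(\beta,t)}$, and a final application of Fubini's theorem to interchange the $d\nu(s)$ and $dr$ integrals yields the stated formula. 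The main analytical obstacle is the double Fubini justification: one must verify absolute integrability of the joint integrand in $(s,\tau)$ (or equivalently in $(s,r)$), which follows from the $L_k^p$-bound \eqref{prop-main-5-0} on $\mathfrak{B}_k^{(\beta,r)}(f)$, the finiteness of $|\nu|$ as a wavelet measure, and the exponential decay $e^{-r}$ controlling the tail in $r$. Once this is in place, the computation is essentially algebraic.
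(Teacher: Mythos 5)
Your proof is correct and follows exactly the route the paper intends: the paper omits a proof of this lemma, leaving it as the analogue of Lemma \ref{Lemma-R-1}, whose proof is precisely your argument (commute the semigroup past the potential via its integral representation \eqref{bi-potential}, use the additivity \eqref{beta-additivity}, and substitute $r=\tau+st$). Your explicit observation that $e^{-\tau}=e^{-r}e^{st}$ cancels the $e^{-st}$ factor built into $\mathcal{V}_k^{(\beta,t)}$ is the one genuinely new ingredient relative to Lemma \ref{Lemma-R-1}, and you have handled it correctly.
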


The upcoming proposition provides an alternative representation of the bi-parametric potential, formulated in terms of the wavelet-like transform.
\begin{proposition} \label{b-potential and V}
    Let $\alpha, \beta >0$, $f \in L_k^p(\mathbb{R}^n)$ for $1\leq p \leq \infty$ and $\nu$ be the finite Borel measure measure on $[0,\infty)$ such that
    \begin{align*}
        \int_0^\infty s^{-\frac{\alpha}{\beta}}d|\nu|(s) < \infty \quad \text{and }\quad  \kappa_\nu(\alpha/\beta) = \int_0^\infty s^{-\frac{\alpha}{\beta}}d\nu(s) \neq 0.
    \end{align*} Then 
    \begin{align*}
        \mathfrak{S}_k^{(\alpha, \beta)}(f)(x) = \frac{1}{\kappa_\nu(\alpha/\beta) \Gamma(\alpha/\beta))}\int_0^\infty s^{\frac{\alpha}{\beta}-1}\mathcal{V}_k^{(\beta,s)}(f)(x) ds.
    \end{align*}
\end{proposition}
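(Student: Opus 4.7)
The plan is to start from the right-hand side, insert the definition \eqref{Wavelet-2} of the wavelet-like transform, swap the order of integration, and then perform a suitable dilation that converts one of the integrals into the Gamma function factor of $\mathfrak{S}_k^{(\alpha,\beta)}(f)$ and the other into the constant $\kappa_\nu(\alpha/\beta)$.

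Concretely, I first substitute $\mathcal{V}_k^{(\beta,s)}(f)(x)=\int_0^\infty e^{-\tau s}\,\mathfrak{B}_k^{(\beta,\tau s)}(f)(x)\,d\nu(\tau)$ into the integral on the right, obtaining
\begin{equation*}
\int_0^\infty s^{\frac{\alpha}{\beta}-1}\mathcal{V}_k^{(\beta,s)}(f)(x)\,ds
=\int_0^\infty\!\!\int_0^\infty s^{\frac{\alpha}{\beta}-1}e^{-\tau s}\,\mathfrak{B}_k^{(\beta,\tau s)}(f)(x)\,d\nu(\tau)\,ds.
\end{equation*}
Next I justify interchanging the order. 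Using the $L_k^p$-boundedness of the $\beta$-semigroup (Proposition~\ref{Proposition-main}(v)), Minkowski's integral inequality gives
\begin{equation*}
\int_0^\infty\!\!\int_0^\infty s^{\frac{\alpha}{\beta}-1}e^{-\tau s}\bigl\|\mathfrak{B}_k^{(\beta,\tau s)}(f)\bigr\|_{L_k^p}\,ds\,d|\nu|(\tau)
\le C(\beta)\Gamma(\alpha/\beta)\|f\|_{L_k^p}\!\!\int_0^\infty\!\!\tau^{-\alpha/\beta}\,d|\nu|(\tau),
\end{equation*}
which is finite by hypothesis, so Fubini applies (pointwise almost everywhere, and also in the $L_k^p$ sense; when $p=\infty$ one argues via $\|\mathfrak{B}_k^{(\beta,t)}(f)\|_{\infty}\le C(\beta)\|f\|_\infty$).

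After swapping, the inner integral in $s$ is evaluated by the change of variable $r=\tau s$, giving
\begin{equation*}
\int_0^\infty s^{\frac{\alpha}{\beta}-1}e^{-\tau s}\,\mathfrak{B}_k^{(\beta,\tau s)}(f)(x)\,ds
=\tau^{-\alpha/\beta}\!\int_0^\infty r^{\frac{\alpha}{\beta}-1}e^{-r}\,\mathfrak{B}_k^{(\beta,r)}(f)(x)\,dr
=\tau^{-\alpha/\beta}\Gamma(\alpha/\beta)\,\mathfrak{S}_k^{(\alpha,\beta)}(f)(x),
\end{equation*}
where the last equality uses the definition \eqref{bi-potential} of the bi-parametric potential. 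Integrating against $d\nu(\tau)$ produces the factor $\kappa_\nu(\alpha/\beta)$, and rearranging yields the claimed identity.

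The main obstacle is a clean justification of Fubini. The hypothesis $\int_0^\infty s^{-\alpha/\beta}d|\nu|(s)<\infty$ is precisely what is needed to make the dominating bound above finite, and this is exactly the role played by the singular power $\tau^{-\alpha/\beta}$ that emerges from the dilation. Once this integrability is in place the computation is mechanical; the nonvanishing condition on $\kappa_\nu(\alpha/\beta)$ only enters when inverting to isolate $\mathfrak{S}_k^{(\alpha,\beta)}(f)$ as in the stated form.
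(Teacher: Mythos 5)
Your proposal is correct and follows the same route as the paper, whose proof is just the one-line remark that the identity follows from the definition \eqref{Wavelet-2} and Fubini's theorem; you have simply filled in the details (the Minkowski/dominating bound using $\int_0^\infty s^{-\alpha/\beta}\,d|\nu|(s)<\infty$ and the dilation $r=\tau s$ producing $\Gamma(\alpha/\beta)$ and $\kappa_\nu(\alpha/\beta)$). No discrepancy to report.
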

\begin{proof}
    Making use of \eqref{Wavelet-2} and Fubini's theorem, we have the required identity.
\end{proof}
\begin{definition}
    Let $\epsilon >0$ and $f\in \mathcal{S}(\mathbb{R}^n)$. We define the truncated operator $\mathcal{V}_{\epsilon}$ as 
    \begin{align*}
        \mathcal{V}_\epsilon (f)(x) = \int_\epsilon ^\infty  s^{-\frac{\alpha}{\beta}-1} \mathcal{V}_k^{(\beta,t)}\left( f\right)(x)ds.
    \end{align*}
\end{definition}
\begin{lemma}\label{Lemma-P-2}
Let $\alpha>0$, $\beta >\alpha $, $f\in L_k^p(\mathbb{R}^n)$ with $1\leq p \leq \infty$ and $g=\mathfrak{S}_k^{(\alpha, \beta)}(f)$. Then
    \begin{align*}
        \mathcal{V}_\epsilon (g)(\xi)= \int_0^\infty e^{-\epsilon s}\mathfrak{B}_k^{(\beta, \epsilon s)}(f)(\xi)\mathcal{K}_{\alpha/\beta}(s)ds,
    \end{align*} where $\mathcal{K}_{\theta}(s)$  is given in Lemma \ref{Lemma-c-2}.
\end{lemma}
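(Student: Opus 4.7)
The plan is to prove Lemma \ref{Lemma-P-2} by directly mirroring the proof of Lemma \ref{Lemma-R-2}, since the bi-parametric representation in Lemma \ref{Lemma-P-1} differs from the Riesz representation in Lemma \ref{Lemma-R-1} only by the exponential weight $e^{-r}$, which, if anything, improves convergence. The target is to collapse a triple integral (one in the truncation variable $u$, one in the $\nu$-variable $s$, and one in the $\mathfrak{B}_k$-variable $r$) into a single integral whose kernel is exactly $\mathcal{K}_{\alpha/\beta}$. I also note that the bracketed reference to ``Lemma \ref{Lemma-c-2}'' in the statement is almost certainly a typo for Lemma \ref{Lemma-R-2}, where $\mathcal{K}_\theta$ is actually defined.

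The first step is to insert the formula from Lemma \ref{Lemma-P-1} into the definition of $\mathcal{V}_\epsilon$, obtaining
\begin{align*}
    \mathcal{V}_\epsilon(g)(\xi) = \frac{1}{\Gamma(\alpha/\beta)} \int_\epsilon^\infty u^{-\frac{\alpha}{\beta}-1} \int_0^\infty \int_0^\infty (r-su)_+^{\frac{\alpha}{\beta}-1} e^{-r}\, \mathfrak{B}_k^{(\beta,r)}(f)(\xi)\, dr\, d\nu(s)\, du.
\end{align*}
Next, I would apply Fubini to move the $u$-integration innermost. The resulting inner integral is
\[
\int_\epsilon^{r/s} u^{-\frac{\alpha}{\beta}-1} (r-su)^{\frac{\alpha}{\beta}-1}\, du,
\]
which, after the substitution $u = \epsilon w$ (with $\tau := r/(s\epsilon)$), reduces to
$\epsilon^{-\alpha/\beta} s^{\alpha/\beta - 1} \int_1^\tau (\tau-w)^{\alpha/\beta - 1} w^{-\alpha/\beta - 1}\, dw$. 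The beta-function identity
$\int_1^\tau (\tau-w)^{\alpha/\beta - 1} w^{-\alpha/\beta - 1}\, dw = \frac{\Gamma(\alpha/\beta)}{\Gamma(\alpha/\beta + 1)} \frac{(\tau-1)^{\alpha/\beta}}{\tau}$
used in Lemma \ref{Lemma-R-2} then evaluates this in closed form. Plugging back in and substituting $r = \epsilon \rho$ cancels the $\epsilon^{-\alpha/\beta}$ factor and yields
\begin{align*}
    \mathcal{V}_\epsilon(g)(\xi) = \int_0^\infty e^{-\epsilon \rho} \mathfrak{B}_k^{(\beta,\epsilon\rho)}(f)(\xi) \left[\frac{1}{\rho\, \Gamma(\alpha/\beta + 1)} \int_0^\rho (\rho - s)^{\alpha/\beta}\, d\nu(s)\right] d\rho,
\end{align*}
and the bracketed quantity is exactly $\mathcal{K}_{\alpha/\beta}(\rho)$, giving the claim after renaming $\rho \mapsto s$.

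The only item requiring care is the justification of Fubini. This is handled by observing that, under the hypothesis $\beta > \alpha$, we have $\alpha/\beta < 1$, so $(r-su)^{\alpha/\beta - 1}$ is integrable across its boundary $r = su$; combined with $e^{-r}$ decay at infinity, the uniform $L_k^p$-bound on $\mathfrak{B}_k^{(\beta,r)}(f)$ from Proposition \ref{Proposition-main}(v), and the finiteness of the wavelet measure $\nu$ (indeed the stronger moment condition built into $\nu$ being a wavelet measure), the integrand is absolutely integrable. I do not anticipate any genuine obstacle; the exponential factor $e^{-r}$ makes this step strictly easier than the corresponding one for the Riesz potential in Lemma \ref{Lemma-R-2}. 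The remainder is pure bookkeeping of Gamma factors and exponents.
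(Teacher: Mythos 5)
Your proposal is correct and is exactly the argument the paper intends: the paper states Lemma \ref{Lemma-P-2} without a written proof, implicitly deferring to the Fubini-plus-beta-function computation of Lemma \ref{Lemma-R-2}, and you carry the extra weight $e^{-r}$ through that same argument (starting from Lemma \ref{Lemma-P-1}) to the correct final identity, while also rightly flagging that the reference to Lemma \ref{Lemma-c-2} for the definition of $\mathcal{K}_\theta$ is a typo for Lemma \ref{Lemma-R-2}. (One cosmetic slip: after the substitution $u=\epsilon w$ the prefactor is $\epsilon^{-1}s^{\alpha/\beta-1}$ rather than $\epsilon^{-\alpha/\beta}s^{\alpha/\beta-1}$, but the factor $\epsilon^{-\alpha/\beta}$ reappears once $(\tau-1)^{\alpha/\beta}/\tau$ is rewritten in terms of $r,s,\epsilon$, so your final display is correct as stated.)
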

We derive the inversion formula for the bi-parametric potential in the following theorem. 
\begin{theorem} \label{Thrm-Inversion of beta}
 Let \( \alpha, \beta > 0 \), \( f \in L_k^p(\mathbb{R}^n) \) for \( 1 \leq p \leq \infty \), and let \( \nu \) be a wavelet measure satisfying the conditions:
\begin{align*}
\text{(i)} & \quad \int_0^\infty s^{l} \, d|\nu|(s) < \infty \quad \text{for some } l > \frac{\alpha}{\beta}, \\
\text{(ii)} & \quad \int_0^\infty s^m \, d\nu(s) = 0 \quad \text{for all } m = 0, 1, \ldots, \left\lfloor \frac{\alpha}{\beta} \right\rfloor,
\end{align*}
where \( \lfloor \cdot \rfloor \) denotes the greatest integer function. Then the following inversion formula holds:
\[
\int_0^\infty s^{-\frac{\alpha}{\beta}-1} \, \mathcal{V}_k^{(\beta,s)}\left( \mathfrak{S}_k^{(\alpha, \beta)}(f) \right)(x) \, ds 
= \lim_{\epsilon \to 0} \mathcal{V}_\epsilon\left( \mathfrak{S}_k^{(\alpha, \beta)}(f) \right)(x) 
= C\left(\frac{\alpha}{\beta}, \nu\right) f(x),
\]
where
\[
\mathcal{V}_\epsilon\left( \mathfrak{S}_k^{(\alpha, \beta)}(f) \right)(x) := \int_\epsilon^\infty s^{-\frac{\alpha}{\beta}-1} \, \mathcal{V}_k^{(\beta,s)}\left( \mathfrak{S}_k^{(\alpha, \beta)}(f) \right)(x) \, ds,
\]
and the constant \( C\left( \frac{\alpha}{\beta}, \nu \right) \) is as defined in Theorem~\ref{Inversion-Riesz}. The convergence of the limit is to be understood in the \( L_k^p(\mathbb{R}^n) \)-norm for \( 1 \leq p < \infty \), and in the uniform norm when \( p = \infty \), provided \( f \in \mathcal{C}_0(\mathbb{R}^n) \)
\end{theorem}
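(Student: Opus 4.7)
The plan is to adapt, essentially verbatim, the argument from Theorem \ref{Inversion-Riesz}, with Lemma \ref{Lemma-P-2} playing the role of Lemma \ref{Lemma-R-2} and an extra exponential factor $e^{-\epsilon s}$ carried throughout. First, I would apply Lemma \ref{Lemma-P-2} with $g = \mathfrak{S}_k^{(\alpha,\beta)}(f)$ to obtain
\begin{align*}
\mathcal{V}_\epsilon(g)(x) = \int_0^\infty e^{-\epsilon s}\,\mathfrak{B}_k^{(\beta,\epsilon s)}(f)(x)\,\mathcal{K}_{\alpha/\beta}(s)\,ds.
\end{align*}
Invoking the identity $\int_0^\infty \mathcal{K}_{\alpha/\beta}(s)\,ds = C(\alpha/\beta,\nu)$ (implicit in the proof of Theorem \ref{Inversion-Riesz} and reducing, through Fubini, to the defining integral $\int_0^\infty \mu(t)t^{-1-\alpha/\beta}\,dt$ of the Laplace transform of $\nu$), one writes
\begin{align*}
\mathcal{V}_\epsilon(g)(x) - C(\alpha/\beta,\nu)f(x) = \int_0^\infty \bigl[\,e^{-\epsilon s}\mathfrak{B}_k^{(\beta,\epsilon s)}(f)(x) - f(x)\bigr]\,\mathcal{K}_{\alpha/\beta}(s)\,ds.
\end{align*}

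Next, I would apply Minkowski's integral inequality in $L_k^p(\mathbb{R}^n)$ to bound the $L_k^p$-norm of the difference by $\int_0^\infty \|e^{-\epsilon s}\mathfrak{B}_k^{(\beta,\epsilon s)}(f) - f\|_{L_k^p(\mathbb{R}^n)}|\mathcal{K}_{\alpha/\beta}(s)|\,ds$. The simple telescoping $e^{-\epsilon s}\mathfrak{B}_k^{(\beta,\epsilon s)}(f) - f = e^{-\epsilon s}(\mathfrak{B}_k^{(\beta,\epsilon s)}(f) - f) + (e^{-\epsilon s} - 1)f$, combined with Proposition \ref{Proposition-main}(ix), shows that the integrand tends to zero pointwise in $s$ as $\epsilon \to 0^+$ for every $1 \leq p < \infty$. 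For $p = \infty$ under the additional assumption $f \in \mathcal{C}_0(\mathbb{R}^n)$, the same reasoning in the sup-norm delivers uniform convergence on $\mathbb{R}^n$, again by Proposition \ref{Proposition-main}(ix).

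The main obstacle is justifying interchange of limit and integral, which requires an integrable dominant. Proposition \ref{Proposition-main}(v) provides the uniform-in-$\epsilon$ bound $\|e^{-\epsilon s}\mathfrak{B}_k^{(\beta,\epsilon s)}(f) - f\|_{L_k^p(\mathbb{R}^n)} \leq (1 + C(\beta))\|f\|_{L_k^p(\mathbb{R}^n)}$, so the entire problem collapses to the integrability $\mathcal{K}_{\alpha/\beta} \in L^1((0,\infty))$. This is exactly where the wavelet hypotheses enter decisively: condition (i) secures sufficient decay of $\mathcal{K}_{\alpha/\beta}(s)$ as $s \to \infty$ through the moment bound on $|\nu|$, while the vanishing-moment condition (ii), via a Taylor expansion of the Riemann--Liouville-type integral $\frac{1}{\Gamma(\alpha/\beta+1)}\int_0^s (s-r)^{\alpha/\beta}\,d\nu(r)$, forces the correct cancellations so that $\mathcal{K}_{\alpha/\beta}(s)$ behaves integrably near $s = 0$. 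Once integrability is confirmed, the dominated convergence theorem closes the argument and yields the desired limit identity in both the $L_k^p$-norm and, in the relevant case, the uniform norm.
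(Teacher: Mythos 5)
Your proposal is correct and follows exactly the route the paper intends: the paper's own proof is a single sentence stating that the argument mirrors Theorem~\ref{Inversion-Riesz} via Lemma~\ref{Lemma-P-2}, which is precisely what you carry out (with the added care of telescoping the extra factor $e^{-\epsilon s}$ and isolating the integrability of $\mathcal{K}_{\alpha/\beta}$ as the point where the wavelet hypotheses enter). If anything, your write-up supplies more detail than the paper does.
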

\begin{proof}The proof, which resembles that of Theorem \ref{Inversion-Riesz}, follows directly from Lemma \ref{Lemma-P-2}.
\end{proof}
\subsection{Potential spaces}We define the bi-parametric potential spaces associated with the operator $\mathfrak{S}_k^{(\alpha, \beta)}$ as
\begin{align*}
    \mathfrak{S}_k^{(\alpha, \beta)}(L_k^p)=\{ g: g=\mathfrak{S}_k^{(\alpha, \beta)}(f) : f\in L_k^p(\mathbb{R}^n)\}
\end{align*}
The norm in this space is  $\|g\|_{\mathfrak{S}_{k,p}^{(\alpha, \beta)}}=\|f\|_{L_k^p(\mathbb{R}^n)}.$\\
The following theorem characterizes the bi-parametric potential spaces. 
\begin{theorem} \label{p.Inversion}
    
     Let $\alpha, \beta>0$ and $f\in L_k^p(\mathbb{R}^n)$ for $1<p<\infty$. Then $f\in \mathfrak{S}_k^{(\alpha, \beta)}(L_k^p) $ if and only if $$\underset{\epsilon>0}{\sup}\|\mathcal{V}_\epsilon (f)\|_{L_k^p(\mathbb{R}^n)} < \infty .$$
\end{theorem}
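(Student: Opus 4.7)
The plan is to mirror the proof of Theorem~\ref{Char.R.P.Space}, with the truncated operator $\mathcal{V}_\epsilon$ replacing $\mathfrak{T}_\epsilon$ and the bi-parametric potential $\mathfrak{S}_k^{(\alpha,\beta)}$ playing the role of $\mathcal{I}_k^\alpha$. Throughout, we fix a wavelet measure $\nu$ satisfying the conditions of Theorem~\ref{Thrm-Inversion of beta} with $C(\alpha/\beta,\nu)\neq 0$.

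For the forward direction, suppose $f=\mathfrak{S}_k^{(\alpha,\beta)}(h)$ with $h\in L_k^p(\mathbb{R}^n)$. By Lemma~\ref{Lemma-P-2},
\[
\mathcal{V}_\epsilon(f)(\xi)=\int_0^\infty e^{-\epsilon s}\,\mathfrak{B}_k^{(\beta,\epsilon s)}(h)(\xi)\,\mathcal{K}_{\alpha/\beta}(s)\,ds,
\]
so Minkowski's integral inequality combined with the uniform $L_k^p$-boundedness of $\mathfrak{B}_k^{(\beta,t)}$ from Proposition~\ref{Proposition-main}(v) yields
\[
\|\mathcal{V}_\epsilon(f)\|_{L_k^p(\mathbb{R}^n)}\;\le\; C(\beta)\,\|h\|_{L_k^p(\mathbb{R}^n)}\int_0^\infty e^{-\epsilon s}|\mathcal{K}_{\alpha/\beta}(s)|\,ds,
\]
and the last integral is bounded independently of $\epsilon$ by the moment assumptions on $\nu$.

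For the converse, assume $M:=\sup_{\epsilon>0}\|\mathcal{V}_\epsilon(f)\|_{L_k^p}<\infty$. Since $1<p<\infty$ the space $L_k^p(\mathbb{R}^n)$ is reflexive, so Banach--Alaoglu produces a sequence $\epsilon_m\downarrow 0$ and some $g\in L_k^p(\mathbb{R}^n)$ with $\mathcal{V}_{\epsilon_m}(f)\rightharpoonup g$ weakly. Setting $h:=g/C(\alpha/\beta,\nu)$, I want to show $f=\mathfrak{S}_k^{(\alpha,\beta)}(h)$. For any $\phi\in\Phi_k$, the $\Phi_k$-invariance of $\mathfrak{S}_k^{(\alpha,\beta)}$ (which follows in the usual way from its Dunkl-multiplier form $(1+\|\xi\|^\beta)^{-\alpha/\beta}$) places $\mathfrak{S}_k^{(\alpha,\beta)}(\phi)$ in $\Phi_k\subset L_k^{p'}$, so one computes
\[
C(\alpha/\beta,\nu)\,\langle\mathfrak{S}_k^{(\alpha,\beta)}(h),\phi\rangle
=\langle g,\mathfrak{S}_k^{(\alpha,\beta)}(\phi)\rangle
=\lim_{m}\langle\mathcal{V}_{\epsilon_m}(f),\mathfrak{S}_k^{(\alpha,\beta)}(\phi)\rangle.
\]
A self-adjointness identity $\langle\mathcal{V}_\epsilon(u),v\rangle=\langle u,\mathcal{V}_\epsilon(v)\rangle$ on $L_k^p\times L_k^{p'}$ (obtained by establishing, as in Lemma~\ref{Lemma-C-1}, a radial $L_k^1$-convolution representation $\mathcal{V}_\epsilon(u)=u\underset{k}{\ast}\widetilde{\mathcal{W}}_\epsilon$ whose integrability follows from the extra decay factor $e^{-st}$) converts the right-hand side to $\lim_m\langle f,\mathcal{V}_{\epsilon_m}(\mathfrak{S}_k^{(\alpha,\beta)}(\phi))\rangle$. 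By Theorem~\ref{Thrm-Inversion of beta}, $\mathcal{V}_{\epsilon_m}(\mathfrak{S}_k^{(\alpha,\beta)}(\phi))\to C(\alpha/\beta,\nu)\phi$ in $L_k^{p'}(\mathbb{R}^n)$, so cancelling the nonzero constant yields $\langle\mathfrak{S}_k^{(\alpha,\beta)}(h),\phi\rangle=\langle f,\phi\rangle$ for all $\phi\in\Phi_k$. Hence $\mathfrak{S}_k^{(\alpha,\beta)}(h)-f$ annihilates $\Phi_k$, so equals a polynomial; as both functions lie in $L_k^p(\mathbb{R}^n)$ the polynomial is zero, giving $f=\mathfrak{S}_k^{(\alpha,\beta)}(h)\in\mathfrak{S}_k^{(\alpha,\beta)}(L_k^p)$.

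The principal obstacle is the convolution representation of $\mathcal{V}_\epsilon$ together with the self-adjoint pairing. Unlike $\mathfrak{T}_\epsilon$, where the $L_k^1$-integrability of the kernel $\mathcal{W}_\epsilon$ required a careful split on $\mathbb{R}^n$, here one must track both the $s$-decay from $e^{-\epsilon s}$ and the $r$-decay of $\mathcal{W}_k^{(\beta,r)}$, and verify the resulting double integral defines a radial $L_k^1$-function so that Proposition~\ref{D.C-Property} and Theorem~\ref{D.C.continuity} apply; the bonus factor $e^{-st}$ in $\mathcal{V}_\epsilon$ actually makes this easier than in the Riesz case and removes the restriction $\beta>\alpha$ present in Theorem~\ref{Char.R.P.Space}. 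The remaining steps (reflexivity for the weak limit, dominated convergence for passing the limit under the $ds$-integral, and the Lizorkin-annihilation argument) are the standard closing moves.
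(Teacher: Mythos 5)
Your proposal is correct and follows essentially the same route the paper intends: the paper itself only sketches this proof, stating that it follows the classical argument of Aliev via Lemmas~\ref{Lemma-P-C-1} and \ref{Lemma-P-c-2}, which are exactly the convolution representation and self-adjointness identities you establish, combined with the Banach--Alaoglu/Lizorkin-annihilation scheme already used in Theorem~\ref{Char.R.P.Space}. Your added observations (the extra decay from $e^{-st}$ easing the $L_k^1$-integrability of the kernel, and the absence of an auxiliary $L_k^q$ condition since $\mathfrak{S}_k^{(\alpha,\beta)}$ is bounded on $L_k^p$) are consistent with the paper's setup.
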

The proof will follow the same approach as in the classical work \cite{Aliev}, utilizing the lemmas provided below.
\begin{lemma} \label{Lemma-P-C-1}
    Let $ \alpha >0$, $\beta >\alpha$ and $f\in \mathcal{S}(\mathbb{R}^n)$. Then $\mathcal{V}_{\epsilon}(f)$ admits the convolution representation
    \begin{align*}
      \mathcal{V}_{\epsilon}(f)(\xi) = f \underset{k}{\ast} V_{\epsilon}(\xi),
    \end{align*} where $V_{\epsilon}(\xi)= \int_{\epsilon}^\infty \int_0^\infty e^{-st}\mathcal{W}_k^{(\beta, st)}(\xi) d\nu(s)t^{-(\frac{\alpha}{\beta}+1)}dt$.
\end{lemma}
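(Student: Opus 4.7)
The plan is to mirror the proof of Lemma~\ref{Lemma-C-1}, with routine modifications to accommodate the extra weight $e^{-st}$ appearing in the definition of $\mathcal{V}_k^{(\beta,t)}$ compared to $\mathbf{W}_k^{(\beta,t)}$.

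First, I would apply the Dunkl transform to $\mathcal{V}_\epsilon(f)$. Writing out the definitions of $\mathcal{V}_\epsilon$ and $\mathcal{V}_k^{(\beta,t)}$, invoking the convolution identity from Proposition~\ref{D.C-Property}, and using the defining relation $\mathcal{D}_k(\mathcal{W}_k^{(\beta,st)})(\xi) = e^{-st\|\xi\|^\beta}$, a Fubini interchange yields
\begin{equation*}
\mathcal{D}_k\bigl(\mathcal{V}_\epsilon(f)\bigr)(\xi) = \mathcal{D}_k(f)(\xi)\, \widetilde{V}_\epsilon(\xi), \qquad \widetilde{V}_\epsilon(\xi) = \int_\epsilon^\infty \int_0^\infty e^{-st(1+\|\xi\|^\beta)}\, d\nu(s)\, t^{-\frac{\alpha}{\beta}-1}\, dt.
\end{equation*}
This is the natural analogue of formula~\eqref{Truncation-1} with the factor $e^{-t\|\xi\|^\beta}$ replaced by $e^{-st(1+\|\xi\|^\beta)}$.

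Next, I would verify that $\widetilde{V}_\epsilon$ is a radial function lying in $L_k^1(\mathbb{R}^n) \cap L_k^\infty(\mathbb{R}^n)$. The $L^\infty$-bound follows immediately from $|e^{-st(1+\|\xi\|^\beta)}| \le 1$, giving the same estimate $\|\widetilde{V}_\epsilon\|_\infty \le \frac{\beta}{\alpha}\|\nu\|\epsilon^{-\alpha/\beta}$ as in Lemma~\ref{Lemma-C-1}. For the $L^1$-bound, I would pass to polar coordinates, apply Tonelli on the absolute value, split the radial integral at some large $N_1$ as in the earlier proof, and control the tail using the rapid decay of $e^{-st r^\beta}$; the presence of the additional exponential factor $e^{-st}$ only strengthens convergence and does not introduce new singularities. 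The condition $\beta > \alpha$ keeps $\alpha/\beta < 1$, so the integral $\int_\epsilon^\infty t^{-\alpha/\beta - 1}\, dt$ converges at infinity.

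With $\widetilde{V}_\epsilon \in L_k^1(\mathbb{R}^n)$ established, one further Fubini interchange (now fully justified) produces
\begin{equation*}
\mathcal{D}_k^{-1}(\widetilde{V}_\epsilon)(\xi) = \int_\epsilon^\infty \int_0^\infty e^{-st}\, \mathcal{W}_k^{(\beta,st)}(\xi)\, d\nu(s)\, t^{-\frac{\alpha}{\beta}-1}\, dt = V_\epsilon(\xi),
\end{equation*}
since $\mathcal{W}_k^{(\beta,st)} = \mathcal{D}_k^{-1}\bigl(e^{-st\|\cdot\|^\beta}\bigr)$ by definition. Combining the displayed identities with Proposition~\ref{D.C-Property}(ii) and the fact that the Dunkl transform is an isomorphism on $L_k^2(\mathbb{R}^n)$ gives the desired identity $\mathcal{V}_\epsilon(f) = f\underset{k}{\ast} V_\epsilon$. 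The main technical hurdle is the $L^1$-estimate on $\widetilde{V}_\epsilon$ needed to justify the last Fubini step, but this is essentially the argument already spelled out in Lemma~\ref{Lemma-C-1}, made easier by the additional $e^{-st}$ decay.
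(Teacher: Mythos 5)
Your proposal is correct and follows exactly the route the paper intends: the paper leaves this lemma without an explicit proof, implicitly deferring to the argument of Lemma~\ref{Lemma-C-1}, and your adaptation (carrying the extra factor $e^{-st}$ through the Dunkl transform, the $L_k^\infty$ and $L_k^1$ estimates, and the final Fubini step) is precisely that argument. The only cosmetic slip is the remark that $\beta>\alpha$ is needed for $\int_\epsilon^\infty t^{-\alpha/\beta-1}\,dt$ to converge at infinity --- this holds for any $\alpha,\beta>0$; the hypothesis $\beta>\alpha$ plays no essential role here.
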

\begin{lemma} \label{Lemma-P-c-2}
    Let $f \in \mathcal{S}(\mathbb{R}^n)$ and $\epsilon >0$. Then $\langle \mathcal{V}_{\epsilon}(f), h \rangle = \langle f, \mathcal{V}_{\epsilon}(h) \rangle$ for every $h\in \mathcal{S}(\mathbb{R}^n)$, where $ \langle \mathcal{V}_{\epsilon}(f), h \rangle= \int_{\mathbb{R}^n} \mathcal{V}_{\epsilon}(f)(\xi)h(\xi)w_k(\xi)d\xi$, in the weak sense.
\end{lemma}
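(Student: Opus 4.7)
\textbf{Proof proposal for Lemma \ref{Lemma-P-c-2}.} The plan is to follow the strategy of Lemma \ref{Lemma-c-2}, with the convolution representation supplied by Lemma \ref{Lemma-P-C-1}. First I would invoke Lemma \ref{Lemma-P-C-1} to write
\[
\mathcal{V}_\epsilon(f)(\xi) = (f \underset{k}{\ast} V_\epsilon)(\xi), \qquad \mathcal{V}_\epsilon(h)(\xi) = (h \underset{k}{\ast} V_\epsilon)(\xi),
\]
where $V_\epsilon$ is the radial kernel defined there. Before anything else I would verify that $V_\epsilon \in L_k^1(\mathbb{R}^n)$ by dominating $\|\mathcal{W}_k^{(\beta,st)}\|_{L_k^1}$ uniformly in $s,t$ via the scaling property (i) and the normalization (iv) of Proposition \ref{Proposition-main}, and checking that $\int_\epsilon^\infty \int_0^\infty e^{-st}\,d|\nu|(s)\,t^{-(\alpha/\beta+1)}\,dt$ is finite (the $t$-integral contributes $O(\epsilon^{-\alpha/\beta})$ and $\nu$ has finite total variation). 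This makes $\mathcal{D}_k(V_\epsilon)$ well-defined and bounded.

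Next I would insert the Dunkl inversion formula into the convolution. Since $f \in \mathcal{S}(\mathbb{R}^n)$ and $V_\epsilon \in L_k^1(\mathbb{R}^n)$, Proposition \ref{D.C-Property}(ii) gives $\mathcal{D}_k(f \underset{k}{\ast} V_\epsilon) = \mathcal{D}_k(f)\,\mathcal{D}_k(V_\epsilon)$, so
\[
(f \underset{k}{\ast} V_\epsilon)(\xi) = c_k \int_{\mathbb{R}^n} \mathcal{D}_k(f)(x)\,\mathcal{D}_k(V_\epsilon)(x)\,E_k(i\xi,x)\,w_k(x)\,dx.
\]
Substituting this into $\langle \mathcal{V}_\epsilon(f),h\rangle$, interchanging the order of integration via Fubini (justified because $\mathcal{D}_k(f),\mathcal{D}_k(h)\in\mathcal{S}(\mathbb{R}^n)$ and $\mathcal{D}_k(V_\epsilon)$ is bounded), and using the symmetry $E_k(i\xi,x)=E_k(ix,\xi)$ together with $c_k\int h(\xi)E_k(ix,\xi)w_k(\xi)\,d\xi = \mathcal{D}_k(h)(-x)$, I arrive at
\[
\langle \mathcal{V}_\epsilon(f),h\rangle = \int_{\mathbb{R}^n} \mathcal{D}_k(f)(x)\,\mathcal{D}_k(V_\epsilon)(x)\,\mathcal{D}_k(h)(-x)\,w_k(x)\,dx.
\]
Running the same manipulation with the roles of $f$ and $h$ swapped yields
\[
\langle f,\mathcal{V}_\epsilon(h)\rangle = \int_{\mathbb{R}^n} \mathcal{D}_k(f)(-x)\,\mathcal{D}_k(V_\epsilon)(x)\,\mathcal{D}_k(h)(x)\,w_k(x)\,dx.
\]

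Finally I would match the two expressions via the change of variables $y=-x$. This uses two parity observations: $w_k(-y)=w_k(y)$, since each factor $|\langle u,y\rangle|^{2k(u)}$ in the definition of $w_k$ is even; and $\mathcal{D}_k(V_\epsilon)(-y)=\mathcal{D}_k(V_\epsilon)(y)$, since $V_\epsilon$ is a superposition of the radial kernels $\mathcal{W}_k^{(\beta,st)}$ and the Dunkl transform maps radial to radial. Under $y=-x$ the right-hand side of the second display transforms into that of the first, proving the identity.

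The one delicate point, which I expect to be the main obstacle, is justifying Fubini in the step that brings $\mathcal{D}_k(h)$ out of the convolution integral. This reduces to the quantitative $L_k^1$-bound on $V_\epsilon$ sketched above; once that bound is in hand, every subsequent interchange is routine and the conclusion is immediate.
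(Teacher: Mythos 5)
Your proposal is correct and follows essentially the same route as the paper: the paper leaves Lemma~\ref{Lemma-P-c-2} unproved (citing the classical argument of Aliev), but its intended proof is the one it gives for the Riesz-case analogue, Lemma~\ref{Lemma-c-2} --- namely, use the convolution representation (there Lemma~\ref{Lemma-C-1}, here Lemma~\ref{Lemma-P-C-1}), pass to Dunkl transforms of the product, apply Fubini, and conclude with the substitution $y=-x$ --- which is exactly your argument. Your added checks (the $L_k^1$-bound on $V_\epsilon$, the evenness of $w_k$ and of $\mathcal{D}_k(V_\epsilon)$ coming from radiality) are details the paper glosses over, and they are verified correctly.
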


\section{Rate of convergence of truncated functions} \label{S:5}
We derive the inversion formula for the Riesz potential and the bi-parametric potential by considering the limit of the truncated functions  $\mathfrak{T}_\epsilon(\mathcal{I}_k(f))$ and $\mathcal{V}_\epsilon\left( \mathfrak{S}_k^{(\alpha,\beta)}(f) \right)$, respectively, both of which are parametrized by the variable $\epsilon$. In this section, we further investigate the rate of convergence of these truncated functions having some $\eta$-smoothness at some point $x_0$. Similar studies can be found in the literature within the context of the finite difference method  (see \cite{Aliev-2014, Aliev-2013, Bayrakci-2020, Bayrakci, Eryigit}). Analogously, we investigate the rate of convergence within the framework of the wavelet method for bi-parametric potential and the Dunkl-Riesz potential, which includes the Bessel and Flett potentials as a special case.\\
We introduce the concept of smoothness for a function in terms of the modulus of continuity. The modulus of continuity 
$\eta$ is recalled in the following definition \cite{Devore}.
  \begin{definition}
      A function $\eta$ defined on the non-negative real line is called the modulus of continuity if it satisfies the properties
      \begin{enumerate}[$(i)$]
          \item $\eta(t) \rightarrow 0$ as $t \rightarrow 0$.
          \item  $\eta(t)$ is non-negative and non-decreasing on $\mathbb{R}_+$.
          \item  $\eta(t) $ is sub-additive: $\eta(t_1+t_2)\leq \eta(t_1)+\eta(t_2)$.
          \item  $\eta(t) $ is continuous on $\mathbb{R}_+$.
      \end{enumerate}
  \end{definition}
The function $\eta$ also satisfies $\eta(\lambda t)<(\lambda+1)\eta(t)$ for any $\lambda >0$. In our further study, we consider the modulus of continuity $\eta$ with the condition that $\eta(t)>at$ for some $a>0$ and $0\leq t \leq \rho$ and $\eta(t)=\eta(\rho)$ for $t> \rho$.
We define the $\eta$-smoothness property at $x_0$ in the following way.
\begin{definition}
Let $0< \rho < 1$ and $B[0,r]$ be the closed ball around the origin with radius $r$.  We say that  $f\in L_k^{2}(\mathbb{R}^n)$ has $\eta$-smoothness property at $x_0$ if 
       \begin{align} \label{smoothness-1}
            \mathcal{N}_{\eta}f(x_0) = \underset{0<r\leq \rho}{\sup} \, \frac{1}{r^{n+2\gamma}\eta(r)} \int_{B[0,r]} |\tau_{x}f(x_0)-f(x_0)|w_k(x)dx < \infty.
       \end{align} 

\end{definition}
\begin{lemma}\label{Lemmma-5-1}
    Let $f \in L_k^2(\mathbb{R}^n)$ has the $\eta$-smoothness property at $x_0$. We define the function $g$ as 
    \begin{align*}
        g(x)= |\tau_xf(x_0)-f(x_0)|, \quad \quad  x \in B[0, \rho] \text{  with  } \rho <1. 
    \end{align*} If $\phi$ is a continuously differentiable function on $[0, \rho]$, then we have
    \begin{align} \label{Inq-1}
       \left| \int_{B[0,\rho]} g(x)\phi(|x|)w_k(x)dx\right| \leq \mathcal{N}_{\eta}f(x_0)\left( \rho^{n+2\gamma}\eta(\rho)|\phi(\rho)|+\int_0^{\rho} r^{n+2\gamma} \eta(r)|\phi^\prime(r)|dr
       \right).
    \end{align}
\end{lemma}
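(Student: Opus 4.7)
The plan is to reduce the surface/weighted integral to a one‑dimensional Stieltjes integral with respect to the cumulative mass function of $g$ on balls, and then integrate by parts so that the smoothness bound $G(r)\le\mathcal{N}_\eta f(x_0)\,r^{n+2\gamma}\eta(r)$ can be applied directly.

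First I would introduce the nondecreasing function
\[
G(r) \;=\; \int_{B[0,r]} g(x)\,w_k(x)\,dx, \qquad 0\le r\le \rho,
\]
and observe that the hypothesis on $\eta$-smoothness gives the pointwise control
\[
G(r) \;\le\; \mathcal{N}_\eta f(x_0)\, r^{\,n+2\gamma}\eta(r),
\]
uniformly in $r\in(0,\rho]$. Note that $G$ is absolutely continuous, with $G(0)=0$ and $dG$ a positive Borel measure on $[0,\rho]$.

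Next, because $\phi(|x|)$ is radial, I would rewrite the integral against the weighted Lebesgue measure as a Stieltjes integral against $dG$:
\[
\int_{B[0,\rho]} g(x)\,\phi(|x|)\,w_k(x)\,dx \;=\; \int_0^\rho \phi(r)\,dG(r).
\]
This is just Fubini/coarea written in terms of the distribution function of the weighted measure; no positivity of $\phi$ is required. I would then apply integration by parts for Lebesgue–Stieltjes integrals (legitimate since $G$ is absolutely continuous and $\phi\in \mathcal{C}^1[0,\rho]$):
\[
\int_0^\rho \phi(r)\,dG(r) \;=\; \phi(\rho)G(\rho) \;-\; \int_0^\rho G(r)\,\phi'(r)\,dr,
\]
using $G(0)=0$.

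Finally, taking absolute values, employing the triangle inequality, and substituting the bound $G(r)\le \mathcal{N}_\eta f(x_0)\, r^{n+2\gamma}\eta(r)$ on both the boundary term and inside the remaining integral yields exactly the claimed estimate \eqref{Inq-1}. The only genuinely delicate point is justifying the Stieltjes integration by parts at the lower limit $r=0$: since $G(r)=O(r^{n+2\gamma}\eta(r))\to 0$ and $\phi$ is bounded, the boundary contribution at $0$ vanishes and the identity is valid, which I would record as a short remark before the final estimate.
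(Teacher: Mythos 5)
Your proposal is correct and follows essentially the same route as the paper: the paper also passes to polar coordinates, forms the cumulative function $Q(r)=\int_{B[0,r]}g(x)w_k(x)\,dx$ (your $G$), integrates by parts against $\phi$, and applies the bound $Q(r)\le \mathcal{N}_\eta f(x_0)\,r^{n+2\gamma}\eta(r)$. Your treatment of the boundary term at $r=0$ is a welcome extra justification, but no substantive difference.
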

\begin{proof} We begin the proof by
   denoting \begin{align*}
      I= \int_{B[0,\rho]} g(x)\phi(|x|)w_k(x)dx,
  \end{align*}  and substituting $x=rx^\prime$, where $r=\|x\|$ and $x^\prime= \frac{x}{\|x\|}$. As a result, we obtain 
  \begin{align*}
      I= \int_0^\rho r^{n+2\gamma-1} \phi(r)\int_{\mathbb{S}^{n-1}} g(rx^\prime)w_k(x^\prime)d\sigma(x^\prime).
  \end{align*} Introducing the functions
  \begin{align*}
      G(r)=\int_{\mathbb{S}^{n-1}} g(rx^\prime) w_k(x^\prime)d\sigma^\prime \quad \text{and} \quad Q(r)=\int_0^r G(s)s^{n+2\gamma-1}ds,
  \end{align*}we deduce that 
  \begin{align*}
      I= \int_0^\rho G(r)\phi(r)r^{n+2\gamma-1}dr = \int_0^\rho \phi(r)dQ(r)= \phi(\rho)Q(\rho)-\int_0^\rho Q(r)\phi^\prime(r)dr. 
  \end{align*} In view of \eqref{smoothness-1}, we have
\begin{align*}
    Q(r)= \int_0^r G(s)s^{n+2\gamma-1}ds = \int_{B[0,r]} g(x)w_k(x)dx  \leq r^{n+2\gamma }\eta(r)\mathcal{N}_\eta f(x_0).
\end{align*} Thus, 
\begin{align*}
   | I| \leq \mathcal{N}_{\eta}f(x_0)\left( \rho^{n+2\gamma}\eta(\rho)|\phi(\rho)|+\int_0^{\rho} r^{n+2\gamma} \eta(r)|\phi^\prime(r)|dr
       \right).
\end{align*}
\end{proof}
The forthcoming result is the particular case of the $\phi$ function in Lemma \ref{Lemmma-5-1}.
\begin{corollary} \label{corollary-5.4}
    Let $f\in L_k^2(\mathbb{R}^n)$ has $\eta$-smoothness property at $x_0$ and we denote $g(x)=|\tau_xf(x_0)-f(x_0)|$. For $0<t\leq 1, \beta \geq 1,$ and 
    $\phi_t^\beta(|x|)=\mathcal{W}_k^{(\beta,t)}(x)$, we have 
    \begin{align*}
       \left| \int_{B[0,\rho]} g(x)\phi_t^\beta(|x|) w_k(x)dx \right| \leq   
       C^1\eta(t^{\frac{1}{\beta}}) 
    \end{align*} and for  $0<\beta\leq 1$ we have 
    \begin{align*}
       \left| \int_{B[0,\rho]} g(x)\phi_t^\beta(|x|) w_k(x)dx \right| \leq C^{*}\eta(t^{\beta}), 
    \end{align*}
    where $C^1$ and $C^{*}$ are the constants independent of $t$. 
\end{corollary}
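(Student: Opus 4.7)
The plan is to specialize Lemma~\ref{Lemmma-5-1} to the radial representative $\phi(r)=\phi_t^\beta(r):=\mathcal{W}_k^{(\beta,t)}(re')$, which is smooth in $r$ by the absolutely convergent integral definition of the kernel, and to extract the desired $\eta$-factor by invoking the scaling identity of Proposition~\ref{Proposition-main}(i),
\[
\phi_t^\beta(r) = t^{-(n+2\gamma)/\beta}\, W(t^{-1/\beta}r),\qquad W := \mathcal{W}_k^{(\beta,1)},
\]
together with the decay $|W(u)|\lesssim u^{-(n+2\gamma+\beta-1)}$ at infinity (Proposition~\ref{Proposition-main}(iii)) and the sub-additivity of $\eta$.

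\textbf{Case $\beta\geq 1$.} I apply Lemma~\ref{Lemmma-5-1} and then substitute $u=t^{-1/\beta}r$ in the resulting integral, converting
\[
\int_0^\rho r^{n+2\gamma}\eta(r)\,|(\phi_t^\beta)'(r)|\,dr = \int_0^{\rho t^{-1/\beta}} u^{n+2\gamma}\eta(t^{1/\beta}u)|W'(u)|\,du.
\]
The inequality $\eta(t^{1/\beta}u)\leq (u+1)\eta(t^{1/\beta})$ factors out $\eta(t^{1/\beta})$, and the remaining $u$-integral is finite on $[0,\infty)$ because, together with $\beta\geq 1$, the decay of $W$ and $W'$ makes $u^{n+2\gamma}(u+1)|W'(u)|$ integrable at infinity. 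The boundary term $\rho^{n+2\gamma}\eta(\rho)|\phi_t^\beta(\rho)|$ is controlled analogously, using the pointwise decay, and is of smaller order. This gives $|I|\leq C^1\eta(t^{1/\beta})$.

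\textbf{Case $0<\beta\leq 1$.} Here the $u$-integral on $[0,\infty)$ diverges, so I decompose $B[0,\rho] = B[0, t^{1/\beta}] \cup \{t^{1/\beta}<\|x\|\leq\rho\}$. On the inner ball, the bound $|\phi_t^\beta|\leq \phi_t^\beta(0) = t^{-(n+2\gamma)/\beta}W(0)$ and the smoothness property $\int_{B[0,t^{1/\beta}]} g\,w_k\leq (t^{1/\beta})^{n+2\gamma}\eta(t^{1/\beta})\mathcal{N}_\eta f(x_0)$ give a contribution $\leq C\,\eta(t^{1/\beta})$. On the shell, I use $|\phi_t^\beta(r)|\lesssim t\,r^{-(n+2\gamma+\beta-1)}$ together with an integration-by-parts argument in the spirit of Lemma~\ref{Lemmma-5-1} against $Q(r):=\int_{B[0,r]}g\,w_k$, which satisfies $Q(r)\leq r^{n+2\gamma}\eta(r)\mathcal{N}_\eta f(x_0)$; this yields a shell contribution $\lesssim t\,\eta(\rho) + t^{1/\beta}\eta(t^{1/\beta})$. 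Since $t^{1/\beta}\leq t^\beta$ and $t\leq t^\beta$ for $0<\beta\leq 1$ and $t\leq 1$, and since $\eta(t^\beta)\geq a\,t^\beta$ (from the assumed lower bound on $\eta$), each of the inner and shell contributions is absorbed into $C^*\eta(t^\beta)$.

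\textbf{Main obstacle.} The technical heart of the proof is the regime $0<\beta\leq 1$: the polynomial decay of the kernel is just barely strong enough for the shell estimate, and the direct substitution argument of Case $\beta\geq 1$ fails. The weaker bound $\eta(t^\beta)$ (versus the natural scale $\eta(t^{1/\beta})$) reflects this slower decay, and the relation $t^{1/\beta}\leq t^\beta$ valid for $\beta\leq 1, t\leq 1$ is precisely the mechanism that translates the estimates obtained at the kernel's natural scale into the target bound $\eta(t^\beta)$ appearing in the definition of $Y(\epsilon)$.
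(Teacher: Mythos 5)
Your overall strategy (specializing Lemma~\ref{Lemmma-5-1} and exploiting the kernel's decay) is in the right family, but the execution of the case $\beta\geq 1$ has a genuine gap, and it is not the route the paper takes. After the substitution $u=t^{-1/\beta}r$ you need
\[
\int_0^{\infty}u^{n+2\gamma}(1+u)\,|W'(u)|\,du<\infty,
\]
because the sub-additivity bound $\eta(t^{1/\beta}u)\leq(1+u)\eta(t^{1/\beta})$ costs a full extra power of $u$. With the decay the paper actually establishes, namely \eqref{Pro-Main-3}, one has $|W(u)|\asymp u^{-(n+2\gamma+\beta-1)}$ at infinity, hence at best $|W'(u)|\lesssim u^{-(n+2\gamma+\beta)}$, and the integrand behaves like $u^{1-\beta}$: divergent at infinity for all $\beta\leq 2$. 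Even with the sharper stable-law decay $|W'(u)|\lesssim u^{-(n+2\gamma+\beta+1)}$ the integrand is $\asymp u^{-\beta}$, which still diverges (logarithmically) at the endpoint $\beta=1$, i.e.\ exactly the Poisson case. Keeping the finite upper limit $\rho t^{-1/\beta}$ only converts this divergence into an extra factor of $\log(1/t)$ (or a negative power of $t$), so the conclusion $|I|\leq C^1\eta(t^{1/\beta})$ does not follow from your computation on the range $1\leq\beta\leq 2$. A secondary issue: you invoke the decay of $W'$ throughout, but no such estimate is available off the shelf; the paper must derive the bound on $(\phi_t^\beta)'$ from scratch via the Bessel-integral representation \eqref{Dunkl-Hankel}, and that derivation is the bulk of its proof.

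The paper avoids the divergence by proving a cruder but uniform statement: using \eqref{Dunkl-Hankel} it shows $|\phi_t^\beta(r)|\leq C_1 t$ and $|(\phi_t^\beta)'(r)|\leq t\,F(r)$ with $F$ integrable against $r^{n+2\gamma}\eta(r)$ on $(0,\rho]$, so that Lemma~\ref{Lemmma-5-1} yields the single bound $|I|\leq C_6 t$ for every $\beta$. Only at the very last step is $t$ converted into the stated moduli: $C_6t\leq C\eta(t^{1/\beta})$ for $\beta\geq 1$ (via $\eta(t)=\eta(t^{1-1/\beta}t^{1/\beta})\leq(1+t^{1-1/\beta})\eta(t^{1/\beta})$ together with $\eta(s)\geq as$), and $C_6t\leq C_6t^{\beta}\leq C\eta(t^{\beta})$ for $0<\beta\leq 1$. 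No scaling substitution, and no weighted integral of $W'$ over an unbounded domain, is ever required. Your treatment of $0<\beta\leq 1$ (inner ball plus shell, then $t^{1/\beta}\leq t^{\beta}$) is essentially sound modulo the same unproved derivative decay and the unproved radial monotonicity used for the bound $|\phi_t^\beta|\leq\phi_t^\beta(0)$; but the case $\beta\geq 1$ should be reworked along the paper's lines, or at least your substitution argument should be restricted to $\beta>2$, where the integrability you assert actually holds.
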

\begin{proof}
Using the relation \eqref{Dunkl-Hankel}, we have the following representation for $\phi_t^\beta$,
$$\phi_t^\beta(r)=  
    r^{-v} \int_0^\infty e^{-ts^\beta}J_v(rs)s^{v+1}ds,$$ where $r=\|x\|$ and $v=\gamma+\frac{n}{2}-1.$ In order to prove the result, we use Lemma \ref{Lemmma-5-1}. To this end, we need the bounds of $\phi_t^\beta$ and the derivative of  $\phi_t^\beta$ with respect to $r$ in terms of $t$ 
    \begin{align*}
        \phi_t^\beta(r) &=  r^{-v} \int_0^\infty e^{-ts^\beta}J_v(rs)s^{v+1}ds\\
        & = r^{-2(v+1)}\int_0^\infty e^{-t(\frac{s}{r})^\beta}J_v(s)s^{v+1}ds.
    \end{align*} Differentiating with respect to $r$, we have 
    \begin{align*}
        \frac{d}{dr}\phi_t^\beta(r) &= -
        \frac{2(v+1)}{r^{2v+3}}\int_0^\infty e^{-t(\frac{s}{r})^\beta}J_v(s)s^{v+1}ds + \frac{t\beta}{r^{2v+\beta+3}}\int_0^\infty e^{-t(\frac{s}{r})^\beta}J_v(s)s^{v+\beta+1}ds \\
        & = I_1(r)+I_2(r).
    \end{align*}
    Now, let us estimate $I_1$. We note that 
    \begin{align*}
        I_1(r) &= -\frac{2(v+1)}{r^{2v+3}}\int_0^\infty e^{-t(\frac{s}{r})^\beta}J_v(s)s^{v+1}ds 
         = -\frac{2(v+1)}{r} \phi_t^\beta(r).
    \end{align*} 
Using the fact that $\phi_t^\beta \in \mathcal{C}_0(\mathbb{R}^n)$ together with equation \eqref{Pro-Main-3}, we conclude that there exists a constant $C_1$  such that $|\phi_t^\beta(r)|\leq C_1t$. Consequently, $|I_1(r)| \leq \frac{C_2t}{r}$, where $C_2=2(v+1)C_1.$ \\
In order to estimate $I_2(r)$, we use the asymptotic behaviors of the Bessel function of the first kind. We modify $I_2$ by a change of variable and derive the bound in two different regions.
\begin{align} \label{I2-1}
    I_2(r)= \frac{t\beta}{r^{v+1}} \int_0^\infty e^{-ts^\beta} J_v(rs)s^{v+\beta+1}ds.
\end{align}
Case-1: Near infinity.\\
The function $e^{-ts^\beta} \rightarrow 0$ rapidly as $s \longrightarrow \infty$. We can find large positive real number $N_1$, $ N_2$ and a constant $C_3$ so that the following holds 
\begin{align*}
    e^{-ts^\beta}s^{v+\beta+\frac{1}{2}} \leq \frac{C_3}{s^{N_2}}, \quad \quad \text{for } s\geq N_1.
\end{align*}
Then 
\begin{align*}
    \int_{N_1}^\infty e^{-ts^\beta} J_v(rs)s^{v+\beta+1}ds & \approx  \sqrt{\frac{2}{\pi r}}\int_{N_1}^\infty e^{-ts^\beta} \cos{\left(rs-\frac{v\pi}{2}-\frac{\pi}{4}\right)} s^{v+\beta+\frac{1}{2}}ds\\
    & \leq \sqrt{\frac{2}{\pi r}}\int_{N_1}^{\infty} \frac{C_3}{s^{N_1}} ds = \frac{\sqrt{2}}{\sqrt{\pi r} (N_2+1)} \frac{C_3}{N_1^{N_2+1}} = \frac{C_4}{\sqrt{r}}. 
\end{align*}
Case-2: Over the bounded interval $(0, N_1]$. \\
Here we use the uniform bound for the Bessel function. We have
\begin{align*}
    \int_0^{N_1}e^{-ts^\beta} J_v(rs)s^{v+\beta+1}ds \leq \int_0^{N_1} s^{v+\beta+1}ds  = \frac{N_1^{v+\beta+2}}{v+\beta+2} = C_5.
\end{align*}
Combining these two cases, \eqref{I2-1} can be bounded as 
\begin{align*}
    I_2(r)&=\frac{t\beta}{r^{v+1}} \int_0^\infty e^{-ts^\beta}J_v(rs)s^{v+\beta+1} ds = \int_0^{N_1}+\int_{N_1}^\infty e^{-ts^\beta}J_v(rs)s^{v+\beta+1} ds \\
  |I_2(r)|  & \leq \frac{t\beta}{r^{v+1}} \left( \frac{C_4}{\sqrt{r}} + C_5 \right).
\end{align*}
Thus, $|(\phi_t^\beta)^{\prime}(r)| \leq  t \left(\frac{C_2}{r}+ \frac{\beta}{r^{v+1}}\left( \frac{C_4}{\sqrt{r}} + C_5 \right)
\right)$.
Substituting the bounds of $|\phi_t^\beta|$ and $|(\phi_t^\beta)^{\prime}(r)|$ in \eqref{Inq-1}, we have 
\begin{align*}
   & \left| \int_{B[o,\rho]} g(x)\phi_t^\beta(|x|) w_k(x)dx
    \right| \\
    & \leq \mathcal{N}_{\eta}(x_0)t \left( \rho^{n+2\gamma} \eta(\rho) C_1 
    +\int_0^\rho r^{n+2\gamma}\eta(r) 
     \left(\frac{C_2}{r}+ \frac{\beta}{r^{v+1}}\left(\frac{C_4}{\sqrt{r}} + C_5 \right)
\right)
   dr \right)\\
   &= C_6t.
\end{align*}
Using the properties of $\eta$ function, we have  $C_6t \leq C_7\eta(t) = C_7\eta(t^{1-1/\beta}t^{1/\beta}) \leq C_7(1+t^{1-1/\beta})\eta(t^{1/\beta}) \leq C_8\eta(t^{1/\beta})$ for $\beta\geq 1$. Since $t\in (0,1],$ for $0<\beta \leq 1$ we have $C_6 t\leq C_6 t^\beta \leq C_9\eta(t^\beta)$.
\end{proof}
The following result discusses the rate of convergence of the truncated functions $\mathfrak{T}_{\epsilon}(\mathcal{I}_k^\alpha (f))$ at the point $x_0$, where $f$ has the $\eta$-smoothness. We replace the operator with a suitable constant for the convenience of the proof i.e., $\mathfrak{T}_{\epsilon}(f)= \frac{1}{C(\frac{\alpha}{\beta},\nu)}\mathfrak{T}_{\epsilon}(f)$, where   $C(\frac{\alpha}{\beta},\nu)$ is given in Theorem \ref{Inversion-Riesz}.

\begin{theorem}\label{Riez-R-of-C}
Let \( f \in L_k^p(\mathbb{R}^n) \cap L_k^2(\mathbb{R}^n) \) for \( 1 \leq p < \infty \), and suppose that \( f \) possesses \(\eta\)-smoothness at the point \( x_0 \in \mathbb{R}^n \). Then the following estimate holds:
\begin{align*}
    \left| \mathfrak{T}_\epsilon(\mathcal{I}_k^{\alpha}(f))(x_0)- f(x_0) \right| \leq C\,\eta(Y(\epsilon)) \quad \text{as } \epsilon \to 0^+,
\end{align*}
where \( 0 < \alpha < \frac{n+2\gamma}{p} \), \( C>0\) is a constant independent of \( \epsilon\), and the function \( Y(\epsilon) \) is defined by
\begin{equation*}
     Y(\epsilon) =   
\begin{cases} 
  \epsilon^{1/\beta}, & \text{if } \beta \geq 1, \\ 
  \epsilon^{\beta},   & \text{if } 0 < \beta \leq 1.
\end{cases}
\end{equation*}
\end{theorem}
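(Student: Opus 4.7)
The plan is to reduce the problem, via Lemma \ref{Lemma-R-2} and the normalization $\mathfrak{T}_\epsilon \mapsto C(\alpha/\beta,\nu)^{-1} \mathfrak{T}_\epsilon$ adopted just before the theorem, to estimating the pointwise difference $\mathfrak{B}_k^{(\beta,\epsilon s)}(f)(x_0) - f(x_0)$ and then integrating the resulting bound against the kernel $\mathcal{K}_{\alpha/\beta}(s)$. Concretely, one writes
\[
\mathfrak{T}_\epsilon(\mathcal{I}_k^\alpha(f))(x_0) - f(x_0) = \frac{1}{C(\alpha/\beta,\nu)}\int_0^\infty \bigl[\mathfrak{B}_k^{(\beta,\epsilon s)}(f)(x_0) - f(x_0)\bigr]\,\mathcal{K}_{\alpha/\beta}(s)\,ds.
\]
Since $\mathcal{W}_k^{(\beta,t)}$ is radial and $c_k\int \mathcal{W}_k^{(\beta,t)}\,w_k = 1$ by Proposition \ref{Proposition-main} (iv), while Proposition \ref{translation property} (ii) lets us transfer the translation in the convolution, the inner bracket equals
\[
c_k\int_{\mathbb{R}^n} \mathcal{W}_k^{(\beta,t)}(y)\bigl[\tau_y f(x_0) - f(x_0)\bigr]\,w_k(y)\,dy,
\]
which is exactly the type of expression Corollary \ref{corollary-5.4} is designed to control.

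Next, I split the spatial integral at $|y| = \rho$. Inside $B[0,\rho]$, setting $\phi_t^\beta(|y|) = \mathcal{W}_k^{(\beta,t)}(y)$ and invoking Corollary \ref{corollary-5.4} directly yields the bound $C\eta(Y(t))$ for $0 < t \leq 1$, which already encodes the split between $\beta \geq 1$ and $0 < \beta \leq 1$. Outside $B[0,\rho]$, the decay identity \eqref{Pro-Main-3} gives $|\mathcal{W}_k^{(\beta,t)}(y)| \lesssim t\,|y|^{-(n+2\gamma+\beta-1)}$; combining this with a Cauchy–Schwarz estimate that uses $\tau_y f(x_0) = \tau_{x_0} f(y)$ from Proposition \ref{translation property} (ii) and the $L^2_k$-boundedness of $\tau_{x_0}$ (through Theorem \ref{C.Trns}), together with the lower bound $\eta(t) \geq at$ on $[0,\rho]$, this tail is absorbed into the same order $C\eta(Y(t))$. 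In summary, for $0 < \epsilon s \leq 1$ I obtain the pointwise estimate $|\mathfrak{B}_k^{(\beta,\epsilon s)}(f)(x_0) - f(x_0)| \leq C\eta(Y(\epsilon s))$.

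Finally, I split the $s$-integral at $s = 1/\epsilon$. On $(0,1/\epsilon]$, the subadditivity of $\eta$ and the inequality $\eta(\lambda t) \leq (\lambda+1)\eta(t)$ allow me to extract $\eta(Y(\epsilon))$ from $\eta(Y(\epsilon s))$, leaving a polynomial factor in $s$ that is integrable against $\mathcal{K}_{\alpha/\beta}(s)$ thanks to the moment condition (i) on $\nu$, while the vanishing-moment condition (ii) ensures the required cancellations of $\mathcal{K}_{\alpha/\beta}$ near $s = 0$. On $(1/\epsilon,\infty)$, the Corollary \ref{corollary-5.4} estimate is no longer directly available, and I instead control $|\mathfrak{B}_k^{(\beta,\epsilon s)}(f)(x_0)|$ via Proposition \ref{Proposition-main} (vii), producing a factor $(\epsilon s)^{-\frac{2(n+2\gamma-1)}{p\beta}}\|f\|_{L_k^p}$ which, paired with the tail decay of $\mathcal{K}_{\alpha/\beta}$ coming again from (ii), contributes a quantity of order $\epsilon^{\alpha/\beta}$ or better, still dominated by $\eta(Y(\epsilon))$ via $\eta(t) \geq at$ for $t \in [0,\rho]$. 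The main obstacle will be checking that the two conditions on $\nu$ are jointly strong enough to tame both the intrinsic endpoint singularities of $\mathcal{K}_{\alpha/\beta}$ and the extra powers of $s$ introduced by the $\eta$-manipulations, while producing a single constant $C$ that depends on $\nu$, $\alpha$, $\beta$, $\mathcal{N}_\eta f(x_0)$, and $\|f\|_{L_k^p\cap L_k^2}$ but is independent of $\epsilon$.
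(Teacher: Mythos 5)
Your proposal follows essentially the same route as the paper: reduce via Lemma \ref{Lemma-R-2} (after the $C(\alpha/\beta,\nu)^{-1}$ normalization) to estimating $\mathfrak{B}_k^{(\beta,\epsilon s)}(f)(x_0)-f(x_0)$, split the spatial integral at $\|y\|=\rho$, apply Corollary \ref{corollary-5.4} on the ball and a decay-plus-Cauchy--Schwarz argument on its complement, and finally integrate against $|\mathcal{K}_{\alpha/\beta}(s)|$ using the moment conditions on $\nu$. The only real difference is bookkeeping at the last step: the paper absorbs the $s$-dependence into a factor $(1+2s+s^{1/\beta})$ and integrates over all $s$ at once, whereas you split the $s$-integral at $1/\epsilon$; your extra treatment of the region $\epsilon s>1$ (where Corollary \ref{corollary-5.4} is stated only for $t\le 1$) is harmless and, if anything, slightly more careful.
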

\begin{proof}
We begin the proof by considering  $\mathfrak{T}_\epsilon(\mathcal{I}_k^{\alpha}(f))(x_0)- f(x_0)$. Invoking Lemma \ref{Lemma-R-2}, we have 
\begin{align} \label{m-eq0}
  i_1=  \left| \mathfrak{T}_\epsilon(\mathcal{I}_k^{\alpha}(f))(x_0)- f(x_0) \right| \leq \frac{1}{\left|C(\frac{\alpha}{\beta},\nu)\right|}\int_0^\infty \left|\mathfrak{B}_k^{(\beta, \epsilon s)}f(x_0)-f(x_0)\right| \left|\mathcal{K}_{\alpha/\beta}(s)\right|ds .
\end{align}
    Let us denote $i_2=| \mathfrak{B}_k^{(\beta, \epsilon s)}f(x_0)-f(x_0)|$. Using \eqref{beta-convolution}, we obtain
    \begin{align*}
        i_2 &= \left|\int_{\mathbb{R}^n} \mathcal{W}_k^{(\beta, \epsilon s)}(y) \left( \tau_yf(x_0)-f(x_0) \right)w_k(y)dy
        \right|\\
        &\leq \left|\int_{B[0,\rho]} \mathcal{W}_k^{(\beta, \epsilon s)} (y)\left( \tau_yf(x_0)-f(x_0) \right)w_k(y)dy
        \right| + \\ & \hspace{4.5cm}  \left|\int_{B[0,\rho]^c} \mathcal{W}_k^{(\beta, \epsilon s)}(y) \left( \tau_yf(x_0)-f(x_0) \right)w_k(y)dy
        \right|\\
        & = i_3 +i_4.
    \end{align*}
    In order to estimate $i_3$, we use Corollary \ref{corollary-5.4}. 
    \begin{align*}
      i_3=  \int_{B[0,\rho]} \mathcal{W}_k^{(\beta, \epsilon s)}(y) \left| \tau_yf(x_0)-f(x_0) \right|w_k(y)dy \leq C_1\eta((\epsilon s)^{1/\beta}) \leq C_1 (1+s^{1/\beta}) \eta (\epsilon^{1/\beta}),
    \end{align*} where $ 1\leq \beta \leq 2$,
    and $ i_3 \leq  C^\prime (1+s^{\beta}) \eta (\epsilon^{\beta}), \quad \text{ for $0< \beta \leq 1$. }$\\
\noindent  For $\beta > 2$, invoking \eqref{smoothness-1} and the bound of the function $ \mathcal{W}_k^{(\beta, \epsilon s)}(y), $ we have 
    \begin{align*}
        i_3 & \leq \int_{B[0,\rho]} \left|\mathcal{W}_k^{(\beta, \epsilon s)}(y) \left( \tau_yf(x_0)-f(x_0) \right) \right|w_k(y)dy 
         \leq C_2\epsilon s \mathcal{N}_{\eta}f(x_0) \rho^{n+2\gamma}\eta(\rho)\\ &\leq C_3s\eta(\epsilon^{1/\beta}), \quad \text{where} \quad C_3= \frac{1}{a} \mathcal{N}_\eta f(x_0)\rho^{n+2\gamma}\eta(\rho)C_2.
    \end{align*}

To calculate $i_4$, we decompose the integrals into two parts as
\begin{align*}
    i_4 &\leq \left|\int_{B[0,\rho]^c} \mathcal{W}_k^{(\beta, \epsilon s)}(y)  \tau_yf(x_0) w_k(y)dy
        \right| + \left|\int_{B[0,\rho]^c} \mathcal{W}_k^{(\beta, \epsilon s)}(y)  f(x_0) w_k(y)dy 
        \right| \\
        &= i_5+i_6.
\end{align*}
In view of the properties of convolution and translation, $i_5$ becomes
\begin{align*}
    i_5 &= \left|\int_{B[0,\rho]^c} \mathcal{W}_k^{(\beta, \epsilon s)}(y)  \tau_yf(x_0) w_k(y)dy
        \right| = \left| \int_{\mathbb{R}^n} \chi_{B[0,\rho]^c} \mathcal{W}_k^{(\beta, \epsilon s)}(y)\tau_{x_0}( f)(y)w_k(y)dy \right| \notag \\
        & \leq  \int_{\mathbb{R}^n} \left| \mathcal{W}_k^{(\beta, \epsilon s)}\right|(y)|\tau_{x_0}(f)(y)|w_k(y)dy. \notag
\end{align*} 
Using the Cauchy-Schwarz inequality, we obtain  
\begin{align*}
    i_5 \leq  \|\mathcal{W}_k^{(\beta, \epsilon s)}\|_{L_k^2(\mathbb{R}^n)} \|f\|_{L_k^2(\mathbb{R}^n)} \leq \epsilon s C_4 \leq C_5 s\eta(\epsilon^{1/\beta}), \quad \text{ for }  \beta \geq 1. %\label{i_5-1}
\end{align*}
In addition, 
\begin{align*}%\label{i_5-2}
    i_5 \leq \epsilon s C_4 \leq C^{\prime\prime} s\eta(\epsilon^{\beta}), \quad \text{ for } 0< \beta \leq 1.
\end{align*}

\noindent Moreover 
\begin{align*}
    i_6 \leq C_6 s\eta(\epsilon^{1/\beta}) \quad \text{ for $\beta \geq 1$  and } i_6 \leq C^{\prime\prime\prime} s\eta(\epsilon^{\beta}) \quad \text{ for $0<\beta \leq 1$.}
\end{align*}

\noindent We finally define 
 $C^1= \max\{C_1, C_3, C_6 \}$, and $C^*=\max \{C^\prime, C^{\prime\prime},C^{\prime\prime\prime} \}$. Then
\begin{align} \label{m-eq}
    i_2 \leq C^1\eta(\epsilon^{1/\beta})\left( 1+2s+s^{1/\beta} \right) \quad \text{for } \beta\geq 1,
\end{align} and 
\begin{align} \label{m-eq-2}
    i_2 \leq C^*\eta(\epsilon^{\beta})\left( 1+2s+s^{\beta} \right) \quad \text{for }0< \beta\leq 1.
\end{align}
 Invoking \eqref{m-eq} and \eqref{m-eq-2} in \eqref{m-eq0},  we have
\begin{align*}
     i_1  \leq C^1\eta(\epsilon^{1/\beta}) \int_0^\infty  \left( 1+2s+s^{1/\beta} \right)\left|\mathcal{K}_{\alpha/\beta}(s)\right|ds \leq C_1(\nu)\eta(\epsilon^{1/\beta}) \text{  for } \beta\geq 1,
\end{align*} 
and 
\begin{align*}
     i_1 \leq C^*\eta(\epsilon^{\beta}) \int_0^\infty  \left( 1+2s+s^{\beta} \right)\left|\mathcal{K}_{\alpha/\beta}(s)\right|ds \leq C_2(\nu)\eta(\epsilon^{\beta}) \text{ 
 for } 0<\beta\leq 1,
\end{align*}
where the properties of the wavelet measure $\nu$ guarantee the convergence of the integrals. Thus, we have the desired result by choosing constant \( C =\max\{C_1(\nu),C_2(\nu)\}\).
% $\left| \mathfrak{T}_\epsilon(\mathcal{I}_k^{\alpha}f)(x_0)- f(x_0) \right| \leq C(\nu)\eta(\epsilon^{1/\beta}),$ as $\epsilon \rightarrow 0^+.$
\end{proof}

The following theorem discusses the rate of convergence of truncated hypersingular integrals in the inversion of bi-parametric potential. For the convenience of the proof, we modify the operator $\mathcal{V}_\epsilon$ by $\frac{1}{C\left(\frac{\alpha}{\beta},\nu\right)}\mathcal{V}_\epsilon$.

\begin{theorem}\label{Rate-Pot}

Let $f \in L_k^p(\mathbb{R}^n)\cap L_k^2(\mathbb{R}^n)$ for $1\leq p \leq \infty$, and suppose that $f$ exhibits $\eta$-smoothness at a point $x_0$. Then the following pointwise estimate holds:
  \begin{align*}
      \left| \mathcal{V}_\epsilon\left( \mathfrak{S}_k^{(\alpha, \beta)}(f) \right)(x_0)-f(x_0) \right| \leq C\eta(Y(\epsilon)), \quad \text{ as $\epsilon \longrightarrow 0^+$}, 
  \end{align*}
  where $\alpha, \beta >0$, $C>0$ is a constant independent of $\epsilon$, and the function \(Y(\epsilon)\) is as defined in Theorem \ref{Riez-R-of-C}.
 \end{theorem}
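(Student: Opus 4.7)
The plan is to adapt the strategy of Theorem~\ref{Riez-R-of-C} to the bi-parametric setting, with only mild additional bookkeeping due to the extra factor $e^{-\epsilon s}$ appearing in Lemma~\ref{Lemma-P-2}. First, after applying the normalization $\mathcal{V}_\epsilon \mapsto \frac{1}{C(\alpha/\beta,\nu)}\mathcal{V}_\epsilon$ as in the Riesz case, and invoking the identity $\int_0^\infty \mathcal{K}_{\alpha/\beta}(s)\, ds = C(\alpha/\beta, \nu)$ implicit in Theorem~\ref{Thrm-Inversion of beta}, Lemma~\ref{Lemma-P-2} allows me to rewrite the difference as
\begin{align*}
\mathcal{V}_\epsilon\!\left(\mathfrak{S}_k^{(\alpha,\beta)}(f)\right)\!(x_0) - f(x_0)
= \frac{1}{C(\alpha/\beta,\nu)}\int_0^\infty \left[e^{-\epsilon s}\,\mathfrak{B}_k^{(\beta,\epsilon s)}(f)(x_0) - f(x_0)\right]\mathcal{K}_{\alpha/\beta}(s)\,ds,
\end{align*}
thereby reducing the problem to a pointwise estimate of the bracketed expression against $\mathcal{K}_{\alpha/\beta}$.

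Second, I would split the bracketed integrand as
\begin{align*}
e^{-\epsilon s}\,\mathfrak{B}_k^{(\beta,\epsilon s)}(f)(x_0) - f(x_0)
= e^{-\epsilon s}\bigl[\mathfrak{B}_k^{(\beta,\epsilon s)}(f)(x_0)-f(x_0)\bigr] + (e^{-\epsilon s}-1)f(x_0),
\end{align*}
and bound the two pieces separately. For the first piece, since $e^{-\epsilon s}\le 1$, the pointwise bounds derived in the proof of Theorem~\ref{Riez-R-of-C} transfer verbatim: decomposing the convolution integral defining $\mathfrak{B}_k^{(\beta,\epsilon s)}(f)(x_0)-f(x_0)$ over $B[0,\rho]$ and its complement, then invoking Corollary~\ref{corollary-5.4}, the $\eta$-smoothness hypothesis at $x_0$, and the $L_k^2$-bound on $\mathcal{W}_k^{(\beta,\epsilon s)}$ via Cauchy--Schwarz, yields $\le C^1\eta(\epsilon^{1/\beta})(1+2s+s^{1/\beta})$ for $\beta\ge 1$ and $\le C^*\eta(\epsilon^{\beta})(1+2s+s^{\beta})$ for $0<\beta\le 1$. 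For the second piece, I would use the elementary inequality $|1-e^{-\epsilon s}|\le \epsilon s$ together with the assumption $\eta(t)\ge at$ on $[0,\rho]$; since $\epsilon \le \epsilon^{1/\beta}\le a^{-1}\eta(\epsilon^{1/\beta})$ whenever $\beta\ge 1$ and $\epsilon$ is sufficiently small (and analogously $\epsilon \le \epsilon^{\beta}\le a^{-1}\eta(\epsilon^{\beta})$ when $0<\beta\le 1$), this contributes at most $C|f(x_0)|\,s\,\eta(Y(\epsilon))$.

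Combining these two estimates and plugging them back into the integral yields
\begin{align*}
\left|\mathcal{V}_\epsilon\!\left(\mathfrak{S}_k^{(\alpha,\beta)}(f)\right)\!(x_0) - f(x_0)\right|
\le \frac{C\,\eta(Y(\epsilon))}{|C(\alpha/\beta,\nu)|}\int_0^\infty\bigl(1+s+s^{1/\beta}+s^{\beta}\bigr)|\mathcal{K}_{\alpha/\beta}(s)|\,ds,
\end{align*}
and the finiteness of the last integral is guaranteed by the wavelet measure conditions (i) and (ii) on $\nu$, which govern the decay of $\mathcal{K}_{\alpha/\beta}$ both near the origin and at infinity exactly as in the Riesz case. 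The only genuinely new ingredient compared to Theorem~\ref{Riez-R-of-C} is the term $(e^{-\epsilon s}-1)f(x_0)$, which is \emph{absent} in the Riesz setting; the main obstacle is therefore to verify that this extra term can be absorbed into the same $\eta(Y(\epsilon))$ rate without degradation. This is handled cleanly by the bound $|1-e^{-\epsilon s}|\le \epsilon s$ and the lower-linearity assumption on $\eta$, so no difficulty beyond careful bookkeeping arises.
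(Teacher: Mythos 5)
Your proposal is correct and follows essentially the same strategy as the paper: reduce via Lemma~\ref{Lemma-P-2} to an integral of $\left|e^{-\epsilon s}\mathfrak{B}_k^{(\beta,\epsilon s)}(f)(x_0)-f(x_0)\right|$ against $\left|\mathcal{K}_{\alpha/\beta}(s)\right|$, reuse the bounds \eqref{m-eq} and \eqref{m-eq-2} already established in the proof of Theorem~\ref{Riez-R-of-C}, and control the new exponential factor through $|1-e^{-\epsilon s}|\le \epsilon s$ together with the lower bound $\eta(t)\ge at$. The only genuine difference is where you place the term $1-e^{-\epsilon s}$: the paper decomposes $e^{-\epsilon s}\mathfrak{B}_k^{(\beta,\epsilon s)}(f)(x_0)-f(x_0)$ as $(e^{-\epsilon s}-1)\mathfrak{B}_k^{(\beta,\epsilon s)}(f)(x_0)+\bigl(\mathfrak{B}_k^{(\beta,\epsilon s)}(f)(x_0)-f(x_0)\bigr)$ and bounds the first summand via the maximal-function estimate \eqref{Pro.Main-6}, $\sup_{t>0}|\mathfrak{B}_k^{(\beta,t)}(f)(x_0)|\le c\,\mathcal{M}_k(f)(x_0)$, whereas you write $e^{-\epsilon s}\bigl(\mathfrak{B}_k^{(\beta,\epsilon s)}(f)(x_0)-f(x_0)\bigr)+(e^{-\epsilon s}-1)f(x_0)$ and need only $|f(x_0)|<\infty$. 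Your variant is marginally more economical, since it sidesteps the (implicit) requirement that $\mathcal{M}_k(f)$ be finite at the particular point $x_0$; otherwise both decompositions produce the identical $s\,\eta(Y(\epsilon))$ contribution and lead to the same final integral in $s$, whose convergence rests on the wavelet-measure conditions exactly as in the Riesz case.
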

\begin{proof}
    We begin the proof by considering $j_0 = \mathcal{V}_\epsilon\left( \mathfrak{S}_k^{(\alpha, \beta)}(f) \right)(x_0)-f(x_0)$
    Using Lemma \ref{Lemma-P-2}, we have
    \begin{align} \label{Last}
        j_0  &  \leq \frac{1}{\left| C\left(\frac{\alpha}{\beta},\nu\right) \right|} \int_0^\infty \left|e^{-\epsilon s}\mathfrak{B}_k^{(\beta, \epsilon s)}(f)(x_0)-f(x_0)\right| \left|\mathcal{K}_{\alpha/\beta}(s)\right|ds.
    \end{align}Let $j_1 = \left|e^{-\epsilon s}\mathfrak{B}_k^{(\beta, \epsilon s)}(f)(x_0)-f(x_0)\right|$. Then 
    \begin{align*}
        j_1 \leq (1 - e^{-\epsilon s}) 
        \left|\mathfrak{B}_k^{(\beta, \epsilon s)}(f) (x_0)\right|+
        \left|\mathfrak{B}_k^{(\beta, \epsilon s)}(f) (x_0) - f(x_0)\right| = j_2+j_3. \end{align*}
 Using the fact that $(1 - e^{-\epsilon s})\leq C_1s\epsilon$ and   \eqref{Pro.Main-6}, 
it follows that  $j_2 \leq C_2 s\eta(\epsilon^{1/\beta})$ for $\beta \geq 1$ and $j_2 \leq C_3 s\eta(\epsilon^{\beta})$ for $0<\beta \leq 1$.  Now, we utilize the estimate \eqref{m-eq} and \eqref{m-eq-2} for $j_3$. 
Thus, \eqref{Last} can be bounded as follows
\begin{align*}
    j_0 \leq  C_4 \eta(\epsilon^{1/\beta}) \int_0^\infty \left( 1+2s+s^{1/\beta} \right)  \left|\mathcal{K}_{\alpha/\beta}(s)\right|ds \leq C^\prime\eta(\epsilon^{1/\beta}), \text{ for $\beta \geq 1$}
\end{align*}
and 
\begin{align*}
    j_0 \leq  C_5 \eta(\epsilon^{\beta}) \int_0^\infty \left( 1+2s+s^{\beta} \right)  \left|\mathcal{K}_{\alpha/\beta}(s)\right|ds \leq C^{\prime \prime}\eta(\epsilon^{\beta}), \text{ for $0 < \beta \leq 1$}.
\end{align*}
The thesis follows directly from the properties of the wavelet measure and the required constant \( C= \max\{C^\prime, C^{\prime \prime} \}\).
\end{proof}

\noindent\textbf{Acknowledgment:}
The authors express their gratitude for the financial support provided by the Anusandhan National Research Foundation (ANRF) with the reference number SUR/2022/005678, for the first author (SKV) and the UGC (221610147795) for the second author (AP) and their funding and resources were instrumental in the completion of this work. \\  
\\
\textbf{Declarations:}\\
\textbf{Conflict of interest:}
We would like to declare that we do not have any conflict of interest.\\ \\
\textbf{Data availability:}
No data sets were generated or analyzed during the current study.
\bibliographystyle{abbrvnat} 
 
\end{document}